\def\N{\mathbb{N}}
\def\Z{\mathbb{Z}}
\def\R{\mathbb{R}}
\def\hau{\mathcal{H}}
\def\leb{\mathcal{L}}
\def\pack{\mathcal{P}}
\def\prepack{P}
\def\Hdim{\operatorname{dim_{H}}}
\def\Pdim{\operatorname{dim_{P}}}
\def\UBdim{\operatorname{\overline{dim}_{B}}}
\def\Hcodim{\operatorname{codim_{H}}}
\def\as{\mbox{a.s.}}
\newcommand{\diam}[1]{|#1|}
\def\ind{\mathds{1}}
\def\dd{\mathrm{d}}
\def\ee{\mathrm{e}}
\def\ph{\varphi}
\def\eps{\varepsilon}
\def\pprec{\ensuremath{\prec\!\!\!\prec}}
\def\Bseq{\mathrm{B}}
\def\Xseq{\mathrm{X}}
\def\aseq{\mathrm{a}}
\def\kseq{\mathrm{k}}
\def\rseq{\mathrm{r}}
\def\xseq{\mathrm{x}}
\def\Acal{\mathcal{A}}
\def\Bcal{\mathcal{B}}
\def\Ecal{\mathcal{E}}
\def\Fcal{\mathcal{F}}
\def\Gcal{\mathcal{G}}
\def\Hcal{\mathcal{H}}
\def\Kcal{\mathcal{K}}
\def\Mcal{\mathcal{M}}
\def\Ncal{\mathcal{N}}
\def\Pcal{\mathcal{P}}
\def\Tcal{\mathcal{T}}
\def\Ucal{\mathcal{U}}
\def\Vcal{\mathcal{V}}
\newcommand{\gen}[1]{\ensuremath{\langle #1\rangle}}
\def\T{\mathbb{T}}
\def\prob{\mathbb{P}}
\def\esp{\mathbb{E}}
\def\dist{d}
\newcommand{\opball}[2]{\mathrm{B}(#1,#2)}
\newcommand{\clball}[2]{\overline{\mathrm{B}}(#1,#2)}
\newcommand{\pare}[1]{\ensuremath{\overleftarrow{#1}}}
\newcommand{\adh}[1]{\ensuremath{\overline{#1}}}
\newtheorem{thm}{Theorem}
\newtheorem{conj}{Conjecture}
\newtheorem{prp}{Proposition}
\newtheorem{lem}{Lemma}
\theoremstyle{remark}
\newtheorem{pb}{Problem}
\newtheorem*{mpb}{Mahler's Problem}
\theoremstyle{definition}
\newtheorem{df}{Definition}
\begin{document}

\title[Diophantine approximation on the Cantor set]{Metric Diophantine approximation\\ on the middle-third Cantor set}

\author{Yann Bugeaud}
\address{Yann Bugeaud\\ Universit\'e de Strasbourg\\ Math\'ematiques\\ 7 rue Ren\'e Descartes\\ 67084 Strasbourg, France}
\email{bugeaud@math.unistra.fr}

\author{Arnaud Durand}
\address{Arnaud Durand\\ Universit\'e Paris-Sud\\ Laboratoire de Math\'ematiques d'Orsay -- UMR 8628\\ B\^atiment 425\\ 91405 Orsay Cedex, France}
\email{arnaud.durand@math.u-psud.fr}

\subjclass[2010]{11J82,11J83,28A78,28A80,60D05} 

\date{\today}

\begin{abstract}
Let $\mu\geq 2$ be a real number and let $\Mcal(\mu)$ denote the set of real numbers approximable at order at least $\mu$ by rational numbers. More than eighty years ago, Jarn\'\i k and, independently, Besicovitch established that the Hausdorff dimension of $\Mcal(\mu)$ is equal to $2/\mu$. We investigate the size of the intersection of $\Mcal(\mu)$ with Ahlfors regular compact subsets of the interval $[0, 1]$. In particular, we propose a conjecture for the exact value of the dimension of $\Mcal(\mu)$ intersected with the middle-third Cantor set and give several results supporting this conjecture. We especially show that the conjecture holds for a natural probabilistic model that is intended to mimic the distribution of the rationals. The core of our study relies heavily on dimension estimates concerning the set of points lying in an Ahlfors regular set and approximated at a given rate by a system of random points.
\end{abstract}

\maketitle

\section{Introduction}\label{sec:intro}

In Section~2 of his paper {\em Some suggestions for further research}, Mahler~\cite{Mahler:1984zr} posed the following question.

\begin{mpb}
How close can irrational elements of Cantor's set be approximated by rational numbers
\begin{enumerate}
\item[(i).] in Cantor's set, and
\item[(ii).] by rational numbers not in Cantor's set?
\end{enumerate}
\end{mpb}

Here, {\em Cantor's set} is the middle-third Cantor set, that is, the set of all real numbers of the form $a_1 3^{-1} + a_2 3^{-2} + \ldots + a_i 3^{-i} + \ldots$, with $a_{i}\in\{0,2\}$ for every integer $i\geq 1$. This set is denoted by $K$ in all what follows. In other words, Mahler asked whether there are elements in the Cantor set with any prescribed {\it irrationality exponent}; see also~\cite[Problem~35]{Bugeaud:2004fk}. Recall that the irrationality exponent $\mu(\xi)$ of an irrational real number $\xi$ is defined by
\[
\mu(\xi)=\sup\left\{\mu\in\R \:\Biggl|\: \left| \xi - \frac{p}{q} \right| < \frac{1}{q^{\mu}} \text{~for i.m.~} (p,q)\in\Z\times\N \right\},
\]
where ``i.m.'' stands for infinitely many. The irrationality exponent of every irrational number is greater than or equal to two and it is precisely equal to two for Lebesgue almost all real numbers, see~\cite[Section~1]{Schmidt:1971gf}. 
Furthermore, when $\xi$ is a rational number, we set $\mu(\xi)=1$. 

As a first step towards Mahler's question, Weiss~\cite{Weiss:2001mz} established that the irrationality exponent is also equal to two for almost every point in the Cantor set $K$, with respect to the standard measure thereon. Levesley, Salp and Velani~\cite{Levesley:2007pd} constructed explicit elements of $K$ having a prescribed irrationality exponent: for $\mu$ greater than or equal to $(3+\sqrt{5})/2$, they showed that
\begin{equation}\label{eq:museries}
\mu \left( \sum_{j=1}^{\infty} 2 \cdot 3^{-\lfloor \mu^{j} \rfloor} \right) = \mu,
\end{equation}
where $\lfloor\,\cdot\,\rfloor$ denotes the integer part function. Subsequently, Bugeaud~\cite{Bugeaud:2008uq} used the theory of continued fractions to prove that~(\ref{eq:museries}) also holds for all $\mu\geq 2$ and that there are uncountably many elements in $K$ with any prescribed irrationality exponent greater than or equal to two. This gives a satisfactory answer to Mahler's question; however, unfortunately, the method does not yield any information on the size of the set of points in the Cantor set whose irrationality exponent is at least equal to a given real number $\mu>2$. The starting point of the present paper is to investigate this problem, by considering the Hausdorff dimension of the intersection set $\Mcal(\mu)\cap K$, where
\[
\Mcal(\mu)=\{\xi\in\R  \:|\: \mu(\xi)\geq\mu\}\,;
\]
we refer to Section~\ref{subsec:notations} below for the necessary recalls on the notion of Hausdorff dimension. The size of the two sets forming the above intersection is very well known. First, the Hausdorff dimension of the middle-third Cantor set $K$ satisfies
\begin{equation}\label{eq:dimCantor}
\Hdim K=\kappa \qquad\text{with}\qquad \kappa=\frac{\log 2}{\log 3},
\end{equation}
see for instance~\cite{Falconer:2003oj}. Second, a famous result established independently by Jarn\'\i k~\cite{Jarnik:1929mf} and Besicovitch~\cite{Besicovitch:1934ly} asserts that
\begin{equation}\label{eq:JarnikBesicovitch}
\forall \mu\geq 2 \qquad \Hdim \Mcal(\mu)=\frac{2}{\mu}.
\end{equation}
Furthermore, for every $\mu\geq 2$, the set of all real numbers with irrationality exponent exactly equal to $\mu$ has the same Hausdorff dimension as the set $\Mcal(\mu)$.

Besides the irrationality exponent, we also consider the exponents $v_{b}$ which first appeared in~\cite{Amou:2010fk}, but were already implicitly used in~\cite{Levesley:2007pd}. They provide information on the lengths of blocks of digits $0$ (or of digits $b-1$) occurring in the expansion of an irrational real number $\xi$ to the integer base $b\geq 2$, and are defined by
\[
v_{b}(\xi)=\sup\left\{v\in\R \:\bigl|\: \|b^{j}\xi\|< b^{-vj} \text{~for i.m.~} j\in\N \right\},
\]
where $\|\,\cdot\,\|$ denotes the distance to the nearest integer. When $\xi$ is rational, it is convenient to adopt the convention that $v_{b}(\xi)=0$. It is easy to see that, for any irrational real number $\xi$, 
\begin{equation}\label{eq:mugeqvb}
\mu(\xi) \geq v_b(\xi)+1.
\end{equation}
However, these inequalities are rarely sharp. As a matter of fact, the exponent $v_{b}$ vanishes for Lebesgue almost every real number. Furthermore, for any real number $v\geq 0$, it is well-known that the Hausdorff dimension of the set
\begin{equation}\label{eq:defVbv}
\Vcal_{b}(v)=\{\xi\in\R \:|\: v_{b}(\xi)\geq v \}
\end{equation}
is equal to $1/(v+1)$.

The triadic analog of the above question is then to determine the Hausdorff dimension of the intersection of sets $\Vcal_{3}(v)\cap K$. This has been performed by Levesley, Salp and Velani~\cite{Levesley:2007pd}, thereby shedding some new light on Mahler's problem. To be specific, Corollary~1 in~\cite{Levesley:2007pd} asserts that
\begin{equation}\label{eq:dimKv3geq}
\Hdim(\Vcal_{3}(v)\cap K)=\frac{\kappa}{v+1},
\end{equation}
which can be seen as the product of the dimension of $K$ by that of $\Vcal_{3}(v)$. It is also proved in~\cite{Levesley:2007pd} that~(\ref{eq:dimKv3geq}) still holds when $\Vcal_{3}(v)$ is replaced by the set of all real numbers $\xi$ such that $v_{3}(\xi)=v$. This shows that there exist points in the Cantor set that can be approximated at any prescribed order by rational numbers whose denominators are powers of three, and gives a very satisfactory answer to the triadic analog of Mahler's question. Note however that things are much easier with the exponent $v_{3}$ than with the irrationality exponent $\mu$, mainly because of the following reason: if a point $\xi\in K$ is approximated at a rate $v>1$ by a triadic number $p/3^{j}$, that is, if
\[
\left|\xi-\frac{p}{3^{j}}\right|<3^{-vj},
\]
then this triadic number $p/3^{j}$ necessarily lies in $K$. Let us also mention that Fishman and Simmons~\cite{Fishman:2012uq} recently extended~(\ref{eq:dimKv3geq}) to the following more general situation: the sets $\Vcal_{3}(v)$ are replaced by $\Vcal_{b}(v)$ for an arbitrary prime value of $b$, and the middle-third Cantor set $K$ is replaced by the fractal set composed of all the real numbers whose $b$-ary digits belong to some fixed subset of $\{0,\ldots,b-1\}$.

With the help of~(\ref{eq:mugeqvb}), let us now remark that the set $\Mcal(\mu)$ contains the set $\Vcal_{3}(\mu-1)$ for any value of $\mu\geq 2$. Along with~(\ref{eq:dimKv3geq}), this readily implies that
\begin{equation}\label{eq:dimKMmulow}
\Hdim(\Mcal(\mu)\cap K)\geq\Hdim(\Vcal_{3}(\mu-1)\cap K)=\frac{\kappa}{\mu}.
\end{equation}
Thus, in view of~(\ref{eq:JarnikBesicovitch}), the Hausdorff dimension of the intersection of the sets $K$ and $\Mcal(\mu)$ is bounded from below by half the product of their dimensions. As regards the upper bound, Pollington and Velani~\cite{Pollington:2005fk} used a covering argument due to Weiss~\cite{Weiss:2001mz} to establish that for every $\mu\geq 2$,
\begin{equation}\label{eq:PoVe}
\Hdim(\Mcal(\mu)\cap K)\leq\frac{2\kappa}{\mu},
\end{equation}
so that the dimension of the intersection is bounded from above by the product of the dimensions. We also refer to the work of Kristensen~\cite{Kristensen:2006ys} for a similar result.

In view of the aforementioned results, Levesley, Salp and Velani~\cite{Levesley:2007pd} speculate at the end of their paper that the dimension of the intersection of the sets $K$ and $\Mcal(\mu)$ is equal to the product of their dimensions, namely,
\begin{equation}\label{eq:conjLSV}
\Hdim(\Mcal(\mu)\cap K)=\frac{2\kappa}{\mu}.
\end{equation}
They also believe that the following weaker statement holds:
\begin{equation}\label{eq:conjLSVweak}
\lim_{\mu\to 2^{+}} \Hdim(\Mcal(\mu)\cap K) = \kappa.
\end{equation}
The starting point of the present work is to discuss the validity of~(\ref{eq:conjLSV}) and~(\ref{eq:conjLSVweak}). We agree with~(\ref{eq:conjLSVweak}) but disagree with~(\ref{eq:conjLSV}). As a matter of fact, we now propose another conjectural dimension for the intersection set $\Mcal(\mu)\cap K$.

\begin{conj}\label{conj:main}
For any real number $\mu\geq 2$, the set of points in the middle-third Cantor set whose irrationality exponent is at least $\mu$ satisfies
\begin{equation}\label{eq:conjmain}
\Hdim(\Mcal(\mu)\cap K)=\max \left\{ \frac{2}{\mu}+\kappa-1, \frac{\kappa}{\mu} \right\}.
\end{equation}
\end{conj}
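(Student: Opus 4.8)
The plan is to prove matching lower and upper bounds, with each of the two competing terms in~(\ref{eq:conjmain}) arising from a distinct family of approximating rationals. The ``generic'' rationals, whose distribution relative to $K$ one expects to mimic that of a random point set, should produce the exponent $2/\mu+\kappa-1$, while the triadic rationals $p/3^{j}$ lying in $K$ produce the exponent $\kappa/\mu$. The lower bound $\kappa/\mu$ is already furnished by~(\ref{eq:dimKMmulow}) through the inclusion $\Vcal_{3}(\mu-1)\cap K\subseteq\Mcal(\mu)\cap K$, so the work concentrates on the generic exponent $2/\mu+\kappa-1$ and on a matching upper bound; the latter must improve substantially on~(\ref{eq:PoVe}).

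For the upper bound I would argue by covering. Writing $\Mcal(\mu)\cap K\subseteq\limsup_{q}E_{q}$ with $E_{q}=\bigcup_{p}\opball{p/q}{q^{-\mu}}\cap K$, where the union runs over the numerators $p$ for which the ball meets $K$, let $N_{q}$ denote the number of such numerators. Since $K$ is Ahlfors $\kappa$-regular, each piece $\opball{p/q}{q^{-\mu}}\cap K$ is covered by a single ball of radius $q^{-\mu}$, so for every $s>0$ and every $Q$ the set $\Mcal(\mu)\cap K$ admits a cover whose $s$-dimensional Hausdorff sum is $\asymp\sum_{q\geq Q}N_{q}\,q^{-\mu s}$. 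The decisive input is the size of $N_{q}$. The $q^{-\mu}$-neighbourhood of $K$ has Lebesgue measure $\asymp q^{-\mu(1-\kappa)}$, so if the rationals of denominator $q$ were equidistributed one would have $N_{q}\asymp q\cdot q^{-\mu(1-\kappa)}=q^{1-\mu(1-\kappa)}$, and the series would converge exactly for $s>2/\mu+\kappa-1$. The triadic denominators behave differently: for $q=3^{j}$, all $2^{j}\asymp q^{\kappa}$ rationals $p/3^{j}$ lying in $K$ are automatically good, and their contribution $\sum_{j}3^{j\kappa}3^{-j\mu s}$ converges exactly for $s>\kappa/\mu$. Thus, granting $N_{q}\lesssim q^{1-\mu(1-\kappa)}$ for all denominators not of the form $3^{j}$, and more generally that no class of denominators carries anomalously many rationals close to $K$, one recovers the upper bound $\max\{2/\mu+\kappa-1,\kappa/\mu\}$.

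For the lower bound $2/\mu+\kappa-1$ I would equip $K$ with its natural $\kappa$-regular measure $\nu$ and realise $\Mcal(\mu)\cap K$ as the limsup set generated by the generic rationals. Because these rationals do not cover $K$ at the base scale $q^{-1}$ when $\mu\geq 2$, naive mass transference from a full-measure base set is unavailable; the dimension must instead be extracted from the counting threshold by a Frostman/Cantor-construction argument of divergence Borel--Cantelli type. Concretely, assuming the lower count $N_{q}\gtrsim q^{1-\mu(1-\kappa)}$ for a positive-density set of denominators, and that the balls $\opball{p/q}{q^{-\mu}}$ are suitably spread over $K$ (quasi-independence with respect to $\nu$), one builds a nested sequence of subsets of $K$ by selecting at each stage the good balls of a fresh denominator; the associated mass distribution then has lower local dimension $2/\mu+\kappa-1$, which yields the Frostman bound. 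The lower count and the spreading together are exactly what drive the divergence half of the Borel--Cantelli dichotomy for this limsup set.

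The main obstacle, in both directions, is the arithmetic equidistribution of the rationals $p/q$ relative to the self-similar set $K$, namely the two-sided estimate $N_{q}\asymp q^{1-\mu(1-\kappa)}$ together with the requisite spreading. Controlling $N_{q}$ amounts to counting solutions of conditions of the form $\mathrm{dist}(p/q,K)<q^{-\mu}$ subject to the ternary digit constraints defining $K$, a problem in which the divisibility of $q$ by powers of $3$ interacts delicately with the base-$3$ geometry of the Cantor set; it is precisely here that the transition between the regimes $2/\mu+\kappa-1\geq\kappa/\mu$ and $2/\mu+\kappa-1<\kappa/\mu$ is decided. Establishing this equidistribution unconditionally appears to lie beyond current technology, which is why~(\ref{eq:conjmain}) is stated as a conjecture; a realistic route to supporting evidence is to verify that the count $N_{q}\asymp q^{1-\mu(1-\kappa)}$, and hence the full dimension formula, holds in tractable special cases and in comparable systems of points for which the equidistribution can be proved.
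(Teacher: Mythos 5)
You were asked about Conjecture~\ref{conj:main}; the paper does not prove it, and neither do you --- your argument is explicitly conditional, so as a proof it has a genuine gap, but it is exactly the gap that makes the statement a conjecture, and your conditional blueprint coincides almost point for point with the paper's own supporting heuristics in Section~\ref{subsec:heuristic}. Your count $N_{q}$ is the paper's $\#\Pcal_{K}^{\mu,j}$ (organized dyadically in base three, $3^{j}\leq q<3^{j+1}$); your equidistribution guess $N_{q}\asymp q^{1-\mu(1-\kappa)}$ together with the triadic floor $q^{\kappa}$ is precisely the conjectured estimate~(\ref{eq:conjcardPKmuj}), whose floor rests on the Broderick--Fishman--Reich conjecture~(\ref{eq:conjBFR1}), that is, on $\sigma(\Pcal_{K}^{0})=\kappa$ as in~(\ref{eq:conjBFR1bis}); and your covering argument is the paper's Hausdorff--Cantelli step. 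Two technical remarks on that step. First, $\mu(\xi)\geq\mu$ only guarantees approximation at order $\mu-\eps$ for each $\eps>0$ (the supremum need not be attained), so the cover must be run with radii $q^{-(\mu-\eps)}$ and $\eps\to 0$, as the paper does with $\Pcal_{K}^{\mu-\eps,j}$; your inclusion $\Mcal(\mu)\cap K\subseteq\limsup_{q}E_{q}$ with radius exactly $q^{-\mu}$ is not quite valid as written, though the fix is routine since the dimension formula is continuous in $\mu$. Second, your dichotomy ``$q=3^{j}$ versus all other $q$'' is too coarse: denominators such as $2\cdot 3^{j}$, or more generally $q$ with a large power of three dividing them, could a priori carry anomalously many rationals near $K$, which is why the paper phrases the needed input as the uniform neighborhood count~(\ref{eq:conjcardPKmuj}) rather than a statement about exact powers of three --- you gesture at this (``no class of denominators carries anomalously many rationals''), but that caveat is itself part of the open arithmetic problem, not a harmless assumption.

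On the lower bounds: your use of $\Vcal_{3}(\mu-1)\cap K\subseteq\Mcal(\mu)\cap K$ to get $\kappa/\mu$ is the paper's~(\ref{eq:dimKMmulow}) and is unconditional. For the term $2/\mu+\kappa-1$, however, your plan (a lower count on a positive-density set of denominators plus quasi-independence of the balls with respect to the natural measure on $K$, then a Frostman mass distribution) requires strictly more than the counting conjecture: note that~(\ref{eq:conjcardPKmuj}) is only an \emph{upper} count, and nothing like a lower count with spreading is known, or even precisely formulated, for true rationals near $K$. The paper's way of substantiating exactly this half is to randomize: in the model of Section~\ref{subsec:probconjmain} the approximating points are genuinely independent, the finiteness of $\Theta(\Xseq,\Bcal(\rseq))$ is automatic, and Theorems~\ref{thm:nonemptygene} and~\ref{thm:minsizegene} (via the fractal percolation Lemma~\ref{lem:codim}) deliver the lower bound $2/\mu+\kappa-1$, yielding Theorem~\ref{thm:conjmainrand}; the generic-rotation result, Theorem~\ref{thm:genupbndMcalmualphaK}, supplies the matching upper bound after a generic rotation. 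So your strategy is the right one and is essentially the paper's, but you should state plainly that it is conditional on (i) a two-sided, uniform-in-$q$ equidistribution of rationals near $K$ and (ii) a quasi-independence property, neither of which is within reach; the paper deliberately stops at heuristics, the doubly metric statement, and the random model, where, as it observes, the validity of the conjecture is equivalent to the density hypothesis~(\ref{eq:densityPK}).
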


We believe that a proof of Conjecture~\ref{conj:main} is very difficult, and requires a deep understanding of the distribution of rational points near the middle-third Cantor set. Note that, according to this conjecture, the Hausdorff dimension of the intersection set $\Mcal(\mu)\cap K$ exhibits a ``phase transition'' at the critical value
\begin{equation}\label{eq:defmucrit}
\mu=\frac{2-\kappa}{1-\kappa}=\frac{\log(9/2)}{\log(3/2)}.
\end{equation}
The approach that we develop hereunder actually suggests the following behaviors:
\begin{itemize}
\item Below this critical value, the rational numbers that belong to the Cantor set, or are very close thereto, do not play a privileged role in the approximation of the points of the Cantor set; as in many generic situations where there is no particular interplay between two sets, the codimension of their intersection is thus the sum of their codimensions, namely,
\[
\Hcodim(\Mcal(\mu)\cap K)=\Hcodim\Mcal(\mu)+\Hcodim K=2-\frac{2}{\mu}-\kappa,
\]
see {\em e.g.}~\cite[Chapter~8]{Falconer:2003oj} for such generic situations.
\item Above the critical value, the aforementioned rational numbers become predominant when approximating the points of the Cantor set; the dimension is thus equal to the lower bound~(\ref{eq:dimKMmulow}) obtained by Levesley, Salp and Velani~\cite{Levesley:2007pd} which corresponds to restricting the approximating rationals to being the triadic endpoints of the intervals occurring in the construction of the Cantor set.
\end{itemize}

Various arguments supporting this conjecture are given in Section~\ref{sec:argconj} below. Therein, we begin by giving heuristic arguments, and we also present a doubly metric point of view that shows further evidence for Conjecture~\ref{conj:main}. We also present a randomized version of the above problem; this consists in replacing the approximating rational numbers by random points that are intended to mimic the distribution of the rationals while taking into account the fact that some rationals fall into the Cantor set exactly, or are very close to it. In particular, we show that Conjecture~\ref{conj:main} is verified for this random model, see Section~\ref{subsec:probconjmain}.

The motivation behind the study of such randomized models starts from the following observation. From the viewpoint of the metric theory of Diophantine approximation, the points with rational coordinates and a sequence of random points chosen independently and uniformly in a given nonempty compact set satisfy a lot of common properties: for example, they both give rise to homogeneous ubiquitous systems and the sets of points that they approximate share the same size and large intersection properties, see~\cite{Durand:2007uq,Durand:2010fk} and the references therein. Pushing further the analogy, we also put forward a conjecture for the exponents $v_{b}$ when $b$ is not a power of three.

\begin{conj}\label{conj:vbnot3}
Let us assume that $b$ is not a power of three. Then:
\begin{enumerate}
\item for any real $\xi\in K$,
\[
0\leq v_{b}(\xi)\leq\frac{\kappa}{1-\kappa}\,;
\]
\item for any real number $v\in[0,\kappa/(1-\kappa)]$,
\[
\Hdim(\Vcal_{b}(v)\cap K)=\frac{1}{v+1}+\kappa-1.
\]
\end{enumerate} 
\end{conj}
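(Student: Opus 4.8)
The plan is to reduce both parts of Conjecture~\ref{conj:vbnot3} to a single statement about the distribution of the rationals with denominator $b^{j}$ relative to $K$. Writing $\eps_{j}=b^{-(v+1)j}$, note that $\|b^{j}\xi\|<b^{-vj}$ holds exactly when $\xi$ lies within $\eps_{j}$ of some rational $p/b^{j}$; hence, for $\xi\in K$, such an approximation forces $\dist(p/b^{j},K)<\eps_{j}$. I would therefore introduce the counting function
\[
N_{j}(\eps)=\#\{p\in\{0,\ldots,b^{j}\} : \dist(p/b^{j},K)<\eps\},
\]
and isolate as the key input the estimate that, when $b$ is not a power of three, the points $p/b^{j}$ distribute near $K$ like a system of $b^{j}$ independent uniform points, namely $N_{j}(\eps)\asymp b^{j}\eps^{1-\kappa}$ up to subexponential factors. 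The exponent $1-\kappa$ is precisely the order of the Lebesgue measure of the $\eps$-neighbourhood of the $\kappa$-regular set $K$.

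Granting this estimate, the upper bound of part~(2) and the whole of part~(1) follow from its upper half by a routine covering argument. Indeed, $\Vcal_{b}(v)\cap K$ is covered, for each $J$, by the intervals $[p/b^{j}-\eps_{j},p/b^{j}+\eps_{j}]$ with $j\geq J$ and $\dist(p/b^{j},K)<\eps_{j}$; there are $N_{j}(\eps_{j})\lesssim b^{j(1-(v+1)(1-\kappa))}$ of them at level $j$, each of diameter $2\eps_{j}$. Summing $(2\eps_{j})^{s}$ over these intervals and over $j\geq J$ produces a series whose exponent is $1-(v+1)(1-\kappa)-(v+1)s$, which is negative — so the $s$-dimensional Hausdorff measure vanishes — as soon as $s>\tfrac{1}{v+1}+\kappa-1$. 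Moreover the exponent $1-(v+1)(1-\kappa)$ governing $N_{j}(\eps_{j})$ becomes negative exactly when $v>\kappa/(1-\kappa)$, so the count tends to $0$ and, being an integer, vanishes for all large $j$: no rational $p/b^{j}$ then lies within $\eps_{j}$ of $K$, which forces $v_{b}(\xi)\leq\kappa/(1-\kappa)$ for every $\xi\in K$ and yields part~(1), the lower bound $v_{b}\geq 0$ being immediate.

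For the matching lower bound in part~(2) I would use the lower half $N_{j}(\eps_{j})\gtrsim b^{j(1-(v+1)(1-\kappa))}$, together with some regularity in the spacing of the good rationals, and feed it into a mass transference / ubiquity scheme of the kind developed for homogeneous ubiquitous systems in~\cite{Durand:2007uq,Durand:2010fk}. Concretely, one builds a Cantor-like subset of $\Vcal_{b}(v)\cap K$ by selecting, at a sparse increasing sequence of scales, the Cantor intervals that do carry a rational $p/b^{j}$ with $\dist(p/b^{j},K)<\eps_{j}$, and by filling the intermediate scales through the self-similarity of $K$ so as to retain local dimension $\kappa$ there; a suitable mass distribution on this set, controlled by a Frostman-type lower bound, should deliver $\Hdim(\Vcal_{b}(v)\cap K)\geq\tfrac{1}{v+1}+\kappa-1$.

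The main obstacle is the distribution estimate for $N_{j}(\eps)$ itself, and above all its upper half underlying part~(1): it asserts that the rationals $p/b^{j}$ never approach $K$ closer than the random prediction $b^{-j/(1-\kappa)}$, a genuine separation statement rather than an averaged count. Controlling $\dist(p/b^{j},K)$ amounts to understanding the base-three digits of $p/b^{j}$ and forbidding arbitrarily long prefixes with digits in $\{0,2\}$, and this is exactly where the hypothesis that $b$ is not a power of three must enter, through the multiplicative independence of $b$ and $3$. Making this rigorous runs into deep questions on expansions in multiplicatively independent bases, in the spirit of Furstenberg's $\times 2,\times 3$ problem and of normality, which is precisely why one is instead led to establish the statement for the random model, where the required independence of the approximating points is built into the hypotheses.
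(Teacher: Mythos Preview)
This statement is a \emph{conjecture} in the paper, not a theorem; the paper offers no proof, only supporting evidence. Your proposal is honest about this: you reduce everything to the counting estimate $N_{j}(\eps)\asymp b^{j}\eps^{1-\kappa}$ and then explicitly acknowledge that its upper half is a genuine separation statement tied to deep questions about multiplicatively independent bases, which is why one retreats to the random model. That diagnosis matches the paper's.

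Your route to the heuristic differs from the paper's, however. You argue directly: assume the $N_{j}$-estimate, run a covering argument for the upper bound on dimension (this is the $v_{b}$-analogue of the calculation in Section~\ref{subsec:heuristic}, which the paper writes out only for the irrationality exponent), observe that the count vanishes for $v>\kappa/(1-\kappa)$ to obtain part~(1), and sketch a ubiquity construction for the lower bound. The paper instead takes two indirect routes: a doubly metric argument (Theorem~\ref{thm:genupbndVbvalphaK}) showing that the conjectured upper bound holds for $\Vcal_{b}(v)\cap(\alpha+K)$ for Lebesgue-almost every rotation $\alpha$, and a probabilistic model (Section~\ref{subsec:probvb}) in which the $b$-adic rationals are replaced by independent uniform points and the dimension formula is proved rigorously, conditionally on the separate conjecture~(\ref{eq:conjbaseb}) that $\#\Kcal_{K}^{j}=\mathrm{O}(2^{\eps j})$. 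Your direct conditional reduction is cleaner and makes the obstacle more explicit; the paper's detours have the advantage of yielding unconditional theorems (about generic rotations, and about the random analogue) rather than a purely conditional implication.
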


In that situation, we expect that there is so little interplay between the expansions to the bases $b$ and $3$ that we are in the generic situation where the codimension of the intersection of the sets $K$ and $\Vcal_{b}(v)$ is equal to the sum of their codimensions. This is of course in stark contrast with the formula~(\ref{eq:dimKv3geq}) obtained by Levesley, Salp and Velani in the case where $b=3$. The probabilistic arguments supporting Conjecture~\ref{conj:vbnot3} are given in Section~\ref{subsec:probvb}.

In Section~\ref{sec:randomgene}, which is actually the core of this paper, we put the study of the probabilistic counterparts of the aforementioned number theoretical open questions in a more general context. Specifically, we place ourselves for convenience on the circle $\T=\R/\Z$, endowed with the usual quotient distance $\dist$, and we develop a general theory for the size of the intersection sets $\Ecal(\Xseq,\rseq)\cap G$, where
\[
\Ecal(\Xseq,\rseq) = \left\{\xi\in\T \:\bigl|\: d(\xi,X_{n}) < r_{n} \text{~for i.m.~} n\geq 1 \right\}
\]
and $G$ is a nonempty compact subset of $\T$. Here, $\Xseq=(X_{n})_{n \geq 1}$ is a sequence of random variables in $\T$ and $\rseq=(r_{n})_{n\geq 1}$ is a sequence of real numbers in $(0,1]$. In particular, we give the probability with which the random set $\Ecal(\Xseq,\rseq)$ intersects the compact set $G$, and we analyze the value of the Hausdorff measures of the intersection $\Ecal(\Xseq,\rseq)\cap G$ for general gauge functions. The random points $X_{n}$ are chosen according to the Lebesgue measure on the circle, and are often supposed to have very few dependence on each other.

In Section~\ref{sec:approxindep}, we consider the particular case where the approximating points $X_{n}$ are stochastically independent. This is what ultimately enables us to establish the probabilistic counterpart of Conjecture~\ref{conj:main} mentioned above. In Section~\ref{sec:approxfrac}, we allow some weak dependence between those points, namely, we assume that $X_{n}$ is the fractional part of $a_{n}X$, where $X$ is chosen according to the Lebesgue measure on $[0,1)$ and $(a_{n})_{n\geq 1}$ is a sequence of positive integers that grows sufficiently rapidly. Our findings lead to the following metrical statement: if the sequence $(a_{n})_{n\geq 1}$ grows fast enough, for instance if $a_{n+1}\geq n^{\log\log n} a_{n}$ for $n\geq 2$, then for Lebesgue almost every real $\alpha$ and for every real $\nu\geq 1$,
\[
\Hdim\left\{\xi\in K\:\biggl|\: \|a_{n}\alpha-\xi\|<\frac{1}{n^{\nu}} \text{~for i.m.~} n\geq 1\right\}=\frac{1}{\nu}+\kappa-1
\]
if this value is nonnegative; otherwise, the above set is empty. We refer to Theorem~\ref{thm:approxfrac} for details. If the real numbers $\xi$ are not restricted to belong to the Cantor set, then one recovers a much easier situation already studied by various authors including Bugeaud~\cite{Bugeaud:2003ye}, Schmeling and Troubetzkoy~\cite{Schmeling:2003}, Fan, Schmeling, and Troubetzkoy~\cite{Fan:2007fj}, and also by Liao and Seuret~\cite{Liao:2013fk}. Most of the proofs are postponed to Sections~\ref{sec:proofmain} and~\ref{sec:fracperc}, while Section~\ref{sec:remprob} is devoted to concluding observations and a brief discussion on further problems.

\section{Various arguments supporting the conjectures}\label{sec:argconj}

We begin by giving heuristic arguments aiming at supporting Conjecture~\ref{conj:main}. We then introduce a doubly metric point of view that shows further evidence for this statement to hold. We also put forward a randomized version of the problem, and we show that Conjecture~\ref{conj:main} holds for this probabilistic model. Finally, we present an analogous random model meant for supporting Conjecture~\ref{conj:vbnot3}. Before proceeding, we first need to set up some notations and give some recalls that will be useful throughout the paper.

\subsection{Notations and recalls}\label{subsec:notations}

For convenience, we shall almost always place ourselves on the circle $\T=\R/\Z$. As a matter of fact, the function $\xi\mapsto\mu(\xi)$ that maps a real number to its irrationality exponent is one-periodic, so we may consider it only on the interval $[0,1)$. With a slight abuse, we shall identify the elements of this interval with those of the circle. As a result, the irrationality exponent $\mu(\xi)$ of an irrational point $\xi\in\T$ may be written in the form
\[
\mu(\xi)=\sup\left\{\mu\in\R \:\bigl|\: \dist(\xi,p/q) < q^{-\mu} \text{~for i.m.~} (p,q)\in\Pcal \right\},
\] 
where $\dist$ is the usual quotient distance on the circle. Likewise, if $\xi$ is a rational number in the circle, then $\mu(\xi)=1$. In the above formula, $\Pcal$ denotes the collection of all pairs $(p,q)$ of integers such that $0\leq p<q$ and $\gcd(p,q)=1$, so that every rational number in the circle $\T$ may be written in the form $p/q$ for a unique pair $(p,q)\in\Pcal$. Then, the intersection of the set $\Mcal(\mu)$ with the interval $[0,1)$  may be identified with the set of all points $\xi\in\T$ such that $\mu(\xi)\geq\mu$. For simplicity, this subset of the circle will still be denoted by $\Mcal(\mu)$; this is also the image of the original set under the projection modulo one. Plainly, the same kind of analogy holds for the exponents $v_{b}(\xi)$ and the sets $\Vcal_{b}(v)$. In addition, we still denote by $K$ the image of the middle-third Cantor set under the projection modulo one.

Let us now give a brief account of the notion of Hausdorff and packing measures and dimensions; we refer for instance to~\cite{Falconer:2003oj,Mattila:1995fk} for further details. Let $d$ be a positive integer. Let $g$ denote a {\em gauge function}, that is, a nondecreasing right-continuous function defined on $[0,\infty)$ which vanishes at zero, and only at zero. The Hausdorff $g$-measure of a subset $E$ of $\T^{d}$ is defined by
\[
\hau^{g}(E)=\lim_{\delta\downarrow 0}\uparrow \hau^{g}_{\delta}(E)
\qquad\text{with}\qquad
\hau^{g}_{\delta}(E)=\inf\sum_{n=1}^{\infty} g(\diam{U_{n}}),
\]
where $\diam{\,\cdot\,}$ stands for diameter and the infimum is taken over all sequences $(U_{n})_{n\geq 1}$ of subsets of $\T^{d}$ verifying $E\subseteq\bigcup_{n} U_{n}$ and $\diam{U_{n}}<\delta$ for all $n\geq 1$. We shall sometimes assume that the gauge function is {\em doubling}, that is, satisfies $g(2r)\leq C g(r)$ for all $r>0$ and some $C>0$.

We shall also make use of the packing $g$-measures. Recall that the packing premeasure associated with a gauge function $g$ is defined by
\[
\prepack^{g}(E)=\lim_{\delta\downarrow 0}\downarrow \prepack^{g}_{\delta}(E)
\qquad\text{with}\qquad
\prepack^{g}_{\delta}(E)=\sup\sum_{n=1}^{\infty} g(\diam{B_{n}}),
\]
where the supremum is taken over all sequences $(B_{n})_{n\geq 1}$ of disjoint closed balls of $\T^{d}$ centered in $E$ and with diameter less than $\delta$. The packing $g$-measure of a set $E$ is then defined by
\[
\pack^{g}(E)=\inf_{E\subseteq\bigcup_{n} U_{n}}\sum_{n=1}^{\infty} \prepack^{g}(U_{n}).
\]
It is know that $\pack^{g}$, as well as $\hau^{g}$, is a Borel measure on the torus $\T^{d}$. However, the premeasures $\prepack^{g}$ are only finitely subadditive.

When the gauge function $g$ is of the form $r\mapsto r^{s}$ with $s>0$, it is customary to let $\hau^{s}$, $\prepack^{s}$ and $\pack^{s}$ stand for $\hau^{g}$, $\prepack^{g}$ and $\pack^{g}$, respectively. These gauge functions give rise to the notion of Hausdorff and packing dimensions. To be specific, the Hausdorff dimension of a nonempty set $E\subseteq\T^d$ is defined by
\[
\Hdim E=\sup\{ s\in (0,d) \:|\: \hau^{s}(E)=\infty \}=\inf\{ s\in (0,d) \:|\: \hau^{s}(E)=0 \},
\]
with the convention that $\sup\emptyset=0$ and $\inf\emptyset=d$. Likewise, the packing dimension $\Pdim E$ is defined by replacing the Hausdorff measure $\hau^{s}$ by the packing measure $\pack^{s}$ in the above formula. Moreover, one recovers the upper box-counting dimension $\UBdim E$ by considering the premeasures $\prepack^{s}$ instead of $\hau^{s}$. All these dimensions thus enable one to give an abridged description of the size properties of $E$. When the set $E$ is empty, we adopt the convention that these dimensions are all equal to $-\infty$.

Finally, so as to make some of our statements more tangible, we often work under the following regularity assumption when considering compact subsets of the circle.

\begin{df}[Ahlfors regularity]\label{df:Ahlfors}
A compact subset $G$ of the circle is Ahlfors regular with dimension $\gamma\in(0,1]$ if there exists a real $c>0$ such that
\[
\forall x\in G \quad \forall r>0 \qquad \frac{r^{\gamma}}{c} \leq \hau^{\gamma}(G\cap\opball{x}{r})\leq c r^{\gamma},
\]
where $\opball{x}{r}$ is the open arc centered at $x$ with length $2r$.
\end{df}

In view of the mass distribution principle for Hausdorff and packing measures, if a compact set $G$ is Ahlfors regular with dimension $\gamma$, we then have
\[
0<\hau^{\gamma}(G)\leq\pack^{\gamma}(G)\leq\prepack^{\gamma}(G)<\infty,
\]
so that the Hausdorff, box-counting and packing dimensions of $G$ coincide and are all equal to $\gamma$, see~\cite{Falconer:2003oj,Mattila:1995fk}. We refer to~\cite{David:1997uq} for more details on Ahlfors regularity and important examples of regular sets; in particular, it is clear that $\T$ is regular with dimension one and it is well known that the set $K$ is regular with dimension $\kappa$ given by~(\ref{eq:dimCantor}).

\subsection{Heuristic arguments supporting Conjecture~\ref{conj:main}}\label{subsec:heuristic}

Prior to stating rigorous results, let us begin by giving some loose arguments towards Conjecture~\ref{conj:main}. Note that for large values of $\mu$, the conjectured dimension coincides with the lower bound~(\ref{eq:dimKMmulow}) resulting from the work of Levesley, Salp and Velani~\cite{Levesley:2007pd}. Therefore, the chief novelty brought by Conjecture~\ref{conj:main} concerns the small values of the approximation rate; the main purpose of our discussion is then to explain why we expect~(\ref{eq:conjmain}) to hold, especially for small values of $\mu$. We actually focus our heuristic arguments towards the upper bound on the Hausdorff dimension, because it is certainly the easiest to get a feel on.

To begin with, note that the density of a given subset $\Pcal'$ of $\Pcal$ may be measured by means of the parameter
\begin{equation}\label{eq:defsigmaPK}
\sigma(\Pcal')=\limsup_{j\to\infty}\frac{1}{j}\log_{3}\#(\Pcal'\cap\Pcal^{j}).
\end{equation}
Here, $\log_{3}$ is the base three logarithm and $\Pcal^{j}$ is the set of pairs $(p,q)\in\Pcal$ such that $3^{j}\leq q<3^{j+1}$. Using standard estimates on the growth of Euler's totient function~\cite[Theorem~13.14]{Apostol:1976uq}, one easily checks that $\log_{3}\#\Pcal^{j}$ is equivalent to $2j$ as $j$ goes to infinity. Thus, $\sigma(\Pcal')$ is bounded from above by two, and the closer $\sigma(\Pcal')$ is to this bound, the denser $\Pcal'$ is in $\Pcal$.

In view of making the connection with Conjecture~\ref{conj:main}, let us consider the set $\Pcal_{K}^{0}$ formed by the rational numbers that belong to the Cantor set $K$, namely,
\begin{equation}\label{eq:defPK0}
\Pcal_{K}^{0}=\{(p,q)\in\Pcal\:|\:p/q\in K\}.
\end{equation}
For any integer $j\geq 1$, it is easy to see that the set $\Pcal_{K}^{0}\cap\Pcal^{j}$ contains the pairs $(2+3a_{1}+\ldots+3^{j-1}a_{j-1},3^{j})$, for all possible choices of $a_{1},\ldots,a_{j-1}\in\{0,2\}$. As a consequence, $\sigma(\Pcal_{K}^{0})\geq\kappa$, where $\kappa$ is the Hausdorff dimension of $K$, see~(\ref{eq:dimCantor}). It is conjectured that there are not considerably much more elements in $\Pcal_{K}^{0}\cap\Pcal^{j}$ than those specified above. To be specific, basing themselves on computer simulations, Broderick, Fishman and Reich made the following conjecture: for all $\eps>0$,
\begin{equation}\label{eq:conjBFR1}
\#(\Pcal_{K}^{0}\cap\Pcal^{j})={\rm O}(2^{(1+\eps)j}) \qquad\text{as}\qquad j\to\infty,
\end{equation}
see~\cite[Conjecture~1]{Broderick:2011fk}; we also refer to~\cite{Fishman:2012uq} for heuristic arguments supporting this conjecture. The validity of~(\ref{eq:conjBFR1}) would straightforwardly imply that
\begin{equation}\label{eq:conjBFR1bis}
\sigma(\Pcal_{K}^{0})=\kappa.
\end{equation}

Moreover, let us consider a point $\xi$ in the Cantor set $K$ and a pair $(p,q)$ in the set $\Pcal^{j}$, and assume that $\dist(\xi,p/q)<q^{-\mu}$. Then, it is clear that the pair $(p,q)$ belongs to the set
\[
\Pcal_{K}^{\mu,j}=\{(p,q)\in\Pcal^{j}\:|\:\dist(p/q,K)<3^{-\mu j}\},
\]
where $\dist(\,\cdot\,,K)$ denotes the distance to the Cantor set $K$. The points at a distance less than $3^{-\mu j}$ from $K$ form a set with Lebesgue measure of the order of $(3^{-\mu j})^{\kappa-1}$; this is due to the fact that $K$ is Ahlfors regular with dimension $\kappa$. Thus, assuming that the rational numbers $p/q$, for $(p,q)\in\Pcal^{j}$, are evenly spread in the circle, this value of the Lebesgue measure should give the proportion of pairs in $\Pcal^{j}$ that belong to $\Pcal_{K}^{\mu,j}$. This would imply that $\#\Pcal_{K}^{\mu,j}$ is of the order of $3^{(2-(1-\kappa)\mu)j}$, up to logarithmic factors. However, this estimate is too stringent when $\mu$ is large; we must indeed take into account the fact that $\Pcal_{K}^{\mu,j}$ necessarily contains $\Pcal_{K}^{0}\cap\Pcal^{j}$, which gives a lower bound on its cardinality. Combined with~(\ref{eq:conjBFR1bis}), the previous arguments result in the following conjecture:
\begin{equation}\label{eq:conjcardPKmuj}
\limsup_{j\to\infty}\frac{1}{j}\log_{3}\#\Pcal_{K}^{\mu,j}\leq\max\{2-(1-\kappa)\mu,\kappa\}.
\end{equation}
Verifying this conjecture would of course require a very good understanding of the distribution of the rational numbers lying near the Cantor set. The conjecture suggests that when $\mu$ is larger than the critical value defined by~(\ref{eq:defmucrit}), the condition defining $\Pcal_{K}^{\mu,j}$ becomes so strict that the $3^{-\mu j}$-neighborhood of $K$ cannot contain considerably more rational numbers than $K$ itself. This is probably what lies at the root of the ``phase transition'' phenomenon mentioned in Section~\ref{sec:intro}.

Finally, for any real $\eps>0$ and any integer $j_{0}\geq 1$, we plainly have
\[
\Mcal(\mu)\cap K\subseteq\bigcup_{j=j_{0}}^{\infty}\bigcup_{(p,q)\in\Pcal_{K}^{\mu-\eps,j}}\opball{p/q}{3^{-(\mu-\eps)j}}.
\]
We may then apply the Hausdorff-Cantelli lemma, and deduce that the Hausdorff dimension of $\Mcal(\mu)\cap K$ is bounded from above by 
any positive real number $s$ for which the series
\[
\sum_{j=1}^{\infty}\#\Pcal_{K}^{\mu-\eps,j}(3^{-(\mu-\eps)j})^{s}
\]
converges. Assuming that the conjectured estimate~(\ref{eq:conjcardPKmuj}) holds, and letting $\eps$ go to zero, we end up with the formula given in Conjecture~\ref{conj:main}.

\subsection{A doubly metric point of view: rotating the Cantor set}\label{subsec:doubmet}

In view of~(\ref{eq:dimKMmulow}), the intersection of the sets $\Mcal(\mu)$ and $K$ cannot be too small, because the rational endpoints of the middle-third Cantor set contribute to its Hausdorff dimension in a very special way. We believe however that these rational endpoints lose their privileged status in the approximation when the exponent $\mu$ is small. A drastic way of artificially removing this privileged status is to rotate the Cantor set by a generic angle $\alpha$; indeed, the endpoints of the set $\alpha+K$ are generically not rational anymore, and thus may not be used in the approximation. Here, $\alpha+K$ denotes the image under the circle rotation with angle $\alpha$ of the set $K$. In order to study the size of the intersection of the set $\Mcal(\mu)$ with the rotated Cantor set $\alpha+K$, we may adopt a doubly metric point of view: we analyze the set of pairs $(\xi,\alpha)$ in the torus such that $\xi$ belongs to $\Mcal(\mu)\cap (\alpha+K)$. We also develop the same approach for the exponent $v_{b}$ related to the expansion to a given base $b\geq 2$. In both cases, the formulae that we obtain for the dimension are similar to those expressed in Conjectures~\ref{conj:main} and~\ref{conj:vbnot3}.

\subsubsection{The irrationality exponent}

A straightforward adaptation of the arguments of Weiss~\cite{Weiss:2001mz} and Kristensen~\cite{Kristensen:2006ys} implies that the bound~(\ref{eq:PoVe}) holds uniformly after rotating the Cantor set by an arbitrary angle; specifically, for every real $\mu\geq 2$ and every angle $\alpha\in\T$,
\[
\Hdim(\Mcal(\mu)\cap(\alpha+K))\leq\frac{2\kappa}{\mu}.
\]
However, the next result gives a generic upper bound that is much more stringent than the above uniform one.

\begin{thm}\label{thm:genupbndMcalmualphaK}
The following holds for Lebesgue almost every angle $\alpha\in\T$:
\begin{enumerate}
\item for any irrational point $\xi\in\alpha+K$,
\[
2\leq\mu(\xi)\leq\frac{2}{1-\kappa}\,;
\]
\item for any real number $\mu\in[2,2/(1-\kappa)]$,
\[
\Hdim(\Mcal(\mu)\cap(\alpha+K))\leq\frac{2}{\mu}+\kappa-1.
\]
\end{enumerate}
\end{thm}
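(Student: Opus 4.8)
## Proof Plan for Theorem \ref{thm:genupbndMcalmualphaK}

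The plan is to control, for almost every angle $\alpha$, how often a point of $\alpha+K$ can be well approximated by rationals, and to do so by a first-moment (Borel--Cantelli type) argument in the variable $\alpha$. The key geometric input is that $K$ is Ahlfors regular with dimension $\kappa$, so the $\rho$-neighborhood of $K$ has Lebesgue measure of order $\rho^{1-\kappa}$; equivalently, the $\rho$-neighborhood of the rotated set $\alpha+K$ has the same measure, uniformly in $\alpha$. First I would fix a rational $p/q\in\Pcal$ and a target radius $r=q^{-\mu}$, and ask: for which angles $\alpha$ does there exist $\xi\in\alpha+K$ with $\dist(\xi,p/q)<r$? This happens exactly when $p/q$ lies within distance $r$ of $\alpha+K$, i.e.\ when $\alpha$ lies within distance $r$ of $(p/q)-K$. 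Since $(p/q)-K$ is a translated reflection of $K$, it is again Ahlfors regular with dimension $\kappa$, so the set of such ``bad'' angles $\alpha$ has Lebesgue measure $\leq C\,r^{1-\kappa}=C\,q^{-(1-\kappa)\mu}$.

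Next I would assemble these estimates into a Borel--Cantelli bound. For part (1), the concern is the largest possible irrationality exponent of points in $\alpha+K$. If $\mu(\xi)>2/(1-\kappa)$ for some $\xi\in\alpha+K$, then for arbitrarily large $q$ there is a pair $(p,q)\in\Pcal$ with $p/q$ within $q^{-\mu}$ of $\alpha+K$ for some $\mu>2/(1-\kappa)$. Summing the measures of bad angles over all $(p,q)\in\Pcal^j$, using $\#\Pcal^j=\mathrm{O}(3^{2j})$ and the per-rational bound $q^{-(1-\kappa)\mu}\leq 3^{-(1-\kappa)\mu j}$, gives a total measure of order $3^{(2-(1-\kappa)\mu)j}$ at scale $j$. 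When $\mu>2/(1-\kappa)$ the exponent $2-(1-\kappa)\mu$ is negative, so the series over $j$ converges, and Borel--Cantelli shows that for almost every $\alpha$ only finitely many such approximations occur; letting $\mu\downarrow 2/(1-\kappa)$ along a countable sequence yields the uniform upper bound $\mu(\xi)\leq 2/(1-\kappa)$ for all irrational $\xi\in\alpha+K$, almost surely. The lower bound $\mu(\xi)\geq 2$ is the classical Dirichlet bound and needs no work.

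For part (2), I would instead fix $\mu\in[2,2/(1-\kappa)]$ and run a Hausdorff--Cantelli covering argument, now \emph{inside} the fiber $\alpha+K$ rather than in the full circle. The observation is that $\Mcal(\mu)\cap(\alpha+K)$ is covered by the arcs $\opball{p/q}{q^{-\mu}}$ over those $(p,q)$ for which $p/q$ is within $q^{-\mu}$ of $\alpha+K$; and crucially, for each such $q$ the number of admissible $p$ is controlled by Ahlfors regularity. The plan is to estimate, for almost every $\alpha$, the expected $s$-dimensional cost $\sum \#\{\text{admissible }(p,q)\}\,(q^{-\mu})^s$ by integrating over $\alpha$: the measure-weighted count of admissible rationals at level $j$ is of order $3^{(2-(1-\kappa)\mu)j}$, so the integrated series behaves like $\sum_j 3^{(2-(1-\kappa)\mu-\mu s)j}$. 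This converges precisely when $s>(2-(1-\kappa)\mu)/\mu = 2/\mu+\kappa-1$, which by Fatou/Fubini forces $\hau^s(\Mcal(\mu)\cap(\alpha+K))=0$ for almost every $\alpha$ and every such $s$, giving the claimed dimension bound.

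The main obstacle I expect is making the Ahlfors-regularity count of admissible rationals uniform and honest: I must bound, for a fixed denominator $q$, the number of reduced fractions $p/q$ whose distance to $\alpha+K$ is below $q^{-\mu}$, and show that after integrating in $\alpha$ this behaves like the naive Lebesgue-measure heuristic (measure of the neighborhood times density of rationals). The clean way is to avoid counting per-$q$ pointwise and instead integrate first in $\alpha$: by Fubini, $\int_\T \#\{(p,q)\in\Pcal^j : \dist(p/q,\alpha+K)<3^{-\mu j}\}\,\dd\alpha = \sum_{(p,q)\in\Pcal^j}\leb(\{\alpha:\dist(p/q,\alpha+K)<3^{-\mu j}\})$, and each summand is $\mathrm{O}(3^{-(1-\kappa)\mu j})$ by regularity of $(p/q)-K$, with a constant $c$ uniform in $p/q$. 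This interchange is what converts the hard pointwise geometric problem into the elementary convergent-series estimate above, and verifying the uniformity of the regularity constant under translation and reflection is the one technical point requiring care.
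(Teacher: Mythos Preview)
Your approach is correct and is essentially the paper's own argument: the paper packages the same first-moment computation into the general Theorem~\ref{thm:majsizegene} (taking $X_{n}=p_{n}/q_{n}-\alpha$, which is uniform on $\T$ when $\alpha$ is Lebesgue-random), and unwinding that theorem's proof gives exactly your Fubini + Ahlfors-regularity + Borel--Cantelli/Hausdorff--Cantelli estimate. The paper also records an alternative route via the doubly metric set $\Mcal^{\times}(\mu)$ and a slicing lemma, which yields the slightly weaker statement~\eqref{eq:genupbndMcalmualphaKweak}.
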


The above corollary shows that codimension of the intersection of the sets $\Mcal(\mu)$ and $\alpha+K$ is generically at least the sum of their codimensions. As mentioned previously, such a situation is expected to occur when there is no particular interplay between the two sets whose intersection is being taken, see {\em e.g.}~\cite[Chapter~8]{Falconer:2003oj}. In other words, the points of the generically rotated Cantor set do not have a specific status with respect to the approximation by rationals. For small values of the exponent $\mu$, we expect that this observation remains valid when the Cantor set is not even rotated. As a matter of fact, the bound given by the above corollary then matches that of Conjecture~\ref{conj:main}.

The situation is very different for large values of $\mu$. Indeed, the above result ensures that the Hausdorff dimension of $\Mcal(\mu)\cap (\alpha+K)$ is at most zero for generic values of $\alpha$, but~(\ref{eq:dimKMmulow}) shows that the dimension is positive when $\alpha$ vanishes. Therefore, when $\mu$ is large, the rationals that belong to the Cantor set, or are very close thereto, become predominant when approximating the points of the Cantor set. Still, they lose their privileged status when the Cantor set is rotated in a generic manner.

Let us now establish Theorem~\ref{thm:genupbndMcalmualphaK}. This result actually follows from the general statements that we give in Section~\ref{sec:randomgene} below. To be more specific, let $(p_{n},q_{n})_{n\geq 1}$ denote an enumeration of the set $\Pcal$ for which $(q_{n})_{n \geq 1}$ is nondecreasing. Furthermore, let $\rseq=(r_{n})_{n\geq 1}$ be the sequence defined by $r_{n}=1/q_{n}$, and let $\Xseq=(X_{n})_{n\geq 1}$ be the circle valued sequence defined by $X_{n}=p_{n}/q_{n}-\alpha$. Note that when $\alpha$ is chosen according to the Lebesgue measure, the points $X_{n}$ are uniformly distributed in the circle. It is now clear that we are in the framework considered in Section~\ref{sec:randomgene}. Indeed, observe that for any real $\eps>0$,
\begin{equation}\label{eq:inclMmualphaK}
\Mcal(\mu)\cap(\alpha+K)\subseteq\alpha+(\Ecal(\Xseq,\rseq^{\mu-\eps})\cap K),
\end{equation}
where the set $\Ecal(\Xseq,\rseq^{\mu-\eps})$ is defined as in~(\ref{eq:defEcalXr}) and $\rseq^{\mu-\eps}$ denotes the sequence formed by the real numbers $r_{n}^{\mu-\eps}$. Given that $K$ is Ahlfors regular with dimension $\kappa$, we have $\prepack^{\kappa}(G)<\infty$, and we may thus apply Theorem~\ref{thm:majsizegene}. This way, we deduce that with probability one:
\[
\mu>\eps+\frac{2}{1-\kappa} \qquad\Longrightarrow\qquad \Ecal(\Xseq,\rseq^{\mu-\eps})\cap K=\emptyset
\]
and for every real $s>0$,
\[
s>\frac{2}{\mu-\eps}+\kappa-1 \qquad\Longrightarrow\qquad \hau^{s}(\Ecal(\Xseq,\rseq^{\mu-\eps})\cap K)=0.
\]
Along with~(\ref{eq:inclMmualphaK}), these two implications straightforwardly lead to Theorem~\ref{thm:genupbndMcalmualphaK}.

For the sake of completeness, let us also give a proof of the above theorem that does not call upon the general results stated in Section~\ref{sec:randomgene}. We actually obtain a slightly weaker statement than Theorem~\ref{thm:genupbndMcalmualphaK}, namely, for every real $\mu\geq 2$ and Lebesgue almost every angle $\alpha\in\T$,
\begin{equation}\label{eq:genupbndMcalmualphaKweak}
\Hdim(\Mcal(\mu)\cap(\alpha+K))\leq\max\left\{\frac{2}{\mu}+\kappa-1,0\right\}.
\end{equation}
However, the advantage of this alternative proof is that it exhibits a connection with a doubly metric statement which has its own interest. To proceed, observe that for any angle $\alpha$, the set $\Mcal(\mu)\cap(\alpha+K)$ may be regarded as the intersection of the set
\[
\Mcal^{\times}(\mu)=\left\{ (\xi,\alpha)\in\T^{2} \:\bigl|\: \xi\in\alpha+K \text{ and } \mu(\xi)\geq\mu \right\}.
\]
with the line of $\T^{2}$ formed by the points whose second coordinate is equal to $\alpha$. Thus, applying a classical slicing result such as~\cite[Corollary~7.10]{Falconer:2003oj}, we deduce that for Lebesgue almost every angle $\alpha\in\T$,
\[
\Hdim(\Mcal(\mu)\cap(\alpha+K))\leq\max\{\Hdim\Mcal^{\times}(\mu)-1,0\}.
\]
In order to obtain~(\ref{eq:genupbndMcalmualphaKweak}), we are thus reduced to estimating the Hausdorff dimension of $\Mcal^{\times}(\mu)$, which is the purpose of the next statement.

\begin{prp}
For any real number $\mu\geq 2$,
\[
\Hdim\Mcal^{\times}(\mu)=\frac{2}{\mu}+\kappa.
\]
\end{prp}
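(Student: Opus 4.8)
The plan is to prove the two bounds $\Hdim\Mcal^{\times}(\mu)\leq \frac{2}{\mu}+\kappa$ and $\Hdim\Mcal^{\times}(\mu)\geq \frac{2}{\mu}+\kappa$ separately. For the upper bound, I would exploit the natural product-like structure of $\Mcal^{\times}(\mu)$ by covering it efficiently. Observe that a pair $(\xi,\alpha)$ belongs to $\Mcal^{\times}(\mu)$ only if there are infinitely many $(p,q)\in\Pcal$ with $\dist(\xi,p/q)<q^{-\mu}$, while simultaneously $\xi-\alpha\in K$. The plan is to produce, for each $q$, a covering of the relevant part of $\Mcal^{\times}(\mu)$ by a controlled number of small boxes. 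For a fixed denominator $q$, the condition $\dist(\xi,p/q)<q^{-\mu}$ confines $\xi$ to at most $q$ arcs, each of length $\sim q^{-\mu}$. For each such arc fixing $\xi$ up to precision $q^{-\mu}$, the constraint $\alpha\in\xi-K$ forces $\alpha$ into the $q^{-\mu}$-neighborhood of a translate of $K$; since $K$ is Ahlfors regular with dimension $\kappa$, this neighborhood is covered by $\sim (q^{-\mu})^{-\kappa}=q^{\mu\kappa}$ intervals of length $q^{-\mu}$.

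\smallskip

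Assembling these pieces, for each $q$ we cover the corresponding layer of $\Mcal^{\times}(\mu)$ by roughly $q\cdot q^{\mu\kappa}=q^{1+\mu\kappa}$ squares of side $q^{-\mu}$. The resulting Hausdorff sum for exponent $s$ behaves like $\sum_{q} q^{1+\mu\kappa}(q^{-\mu})^{s}=\sum_q q^{\,1+\mu\kappa-\mu s}$, which converges precisely when $s>\frac{2+\mu\kappa}{\mu}=\frac{2}{\mu}+\kappa$. (One must be slightly careful to sum over dyadic blocks $2^{k}\leq q<2^{k+1}$ and use that the number of admissible $p$ for a given $q$ is at most $q$; this is the standard Hausdorff--Cantelli type argument.) This yields $\Hdim\Mcal^{\times}(\mu)\leq \frac{2}{\mu}+\kappa$.

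\smallskip

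For the lower bound, the natural strategy is to realize $\Mcal^{\times}(\mu)$, or a large enough subset, as containing a product set whose dimension is easy to estimate from below. The key observation is that $\Mcal^{\times}(\mu)$ contains the set of pairs $(\xi,\alpha)$ with $\xi\in\Mcal(\mu)$ and $\alpha=\xi-y$ for some $y\in K$; equivalently, under the diffeomorphism $(\xi,\alpha)\mapsto(\xi,\xi-\alpha)$ of $\T^{2}$, the set $\Mcal^{\times}(\mu)$ is mapped onto $\Mcal(\mu)\times K$. Since bi-Lipschitz maps preserve Hausdorff dimension, I would reduce the problem to computing $\Hdim(\Mcal(\mu)\times K)$. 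By the classical product inequality $\Hdim(A\times B)\geq\Hdim A+\Hdim B$, and using the Jarn\'ik--Besicovitch value $\Hdim\Mcal(\mu)=2/\mu$ from~(\ref{eq:JarnikBesicovitch}) together with $\Hdim K=\kappa$ from~(\ref{eq:dimCantor}), this immediately gives $\Hdim(\Mcal(\mu)\times K)\geq \frac{2}{\mu}+\kappa$, hence the desired lower bound. The upper bound established above also follows cleanly from the product structure, since for $\Mcal(\mu)$ the Hausdorff and packing dimensions are known to coincide (the Jarn\'ik--Besicovitch set being dimensionally regular), so that the product dimension formula $\Hdim(A\times B)\leq\Hdim A+\Pdim B$ pins down the value exactly.

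\smallskip

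The main obstacle I anticipate is justifying the change of variables $(\xi,\alpha)\mapsto(\xi,\xi-\alpha)$ rigorously on the torus and confirming that it sends $\Mcal^{\times}(\mu)$ exactly onto the product $\Mcal(\mu)\times K$, so that both bounds descend from the well-understood product set. This map is the restriction to $\T^{2}$ of a linear automorphism of $\R^{2}$ with integer entries and determinant $\pm1$, hence bi-Lipschitz on the torus, which guarantees preservation of Hausdorff dimension; the only real care needed is that the irrationality-exponent condition $\mu(\xi)\geq\mu$ depends solely on $\xi$ and is unaffected by the shear. Once this identification is in place, the computation reduces entirely to the standard product-dimension inequalities, and the precise value $\frac{2}{\mu}+\kappa$ follows. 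The alternative, self-contained covering argument sketched above serves as an independent verification of the upper bound should one prefer to avoid invoking the full product machinery.
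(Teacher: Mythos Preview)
Your reduction via the shear $(\xi,\alpha)\mapsto(\xi,\xi-\alpha)$ to the genuine product $\Mcal(\mu)\times K$ is correct and in fact cleaner than the paper's argument: the paper uses the map $(\xi,\alpha)\mapsto(\xi-\alpha,\alpha)$, obtaining a fibered set $\{(\xi,\alpha):\xi\in K,\ \alpha\in-\xi+\Mcal(\mu)\}$ rather than a product, and then establishes the lower bound through a slicing inequality (adapted from \cite[Proposition~7.9]{Falconer:2003oj}) that integrates the $\hau^{s-\kappa}$-measure of the vertical fibers against $\hau^{\kappa}$ on $K$. Your use of the Marstrand-type product inequality $\Hdim(A\times B)\geq\Hdim A+\Hdim B$ is more direct. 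The paper's upper bound is an explicit covering argument essentially identical in spirit to the one you sketch first.

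There is, however, one genuine error in your product-based upper bound. You assert that $\Mcal(\mu)$ is ``dimensionally regular'' with $\Pdim\Mcal(\mu)=\Hdim\Mcal(\mu)=2/\mu$. This is false: $\Mcal(\mu)$ is a set with large intersection in Falconer's sense \cite{Bugeaud:2004wc,Durand:2007uq,Falconer:1994hx}, and such sets always have full packing dimension, so $\Pdim\Mcal(\mu)=1$ (the paper itself recalls this fact in Section~\ref{sec:approxindep}). The fix is immediate: apply the product inequality $\Hdim(A\times B)\leq\Hdim A+\Pdim B$ with the roles reversed, taking $B=K$ rather than $B=\Mcal(\mu)$. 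Since $K$ is Ahlfors regular, $\Pdim K=\kappa$, and you obtain $\Hdim(\Mcal(\mu)\times K)\leq 2/\mu+\kappa$ as desired. With this correction your argument is complete and arguably more elegant than the paper's; your independent covering argument also stands as a valid alternative for the upper bound.
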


\begin{proof}
To begin with, let us observe that the mapping $\Psi:(\xi,\alpha)\mapsto (\xi-\alpha,\alpha)$ from $\T^{2}$ onto itself is bi-Lipschitz and satisfies
\[
\Psi(\Mcal^{\times}(\mu))=\left\{ (\xi,\alpha)\in\T^{2} \:\bigl|\: \xi\in K \text{ and } \alpha\in -\xi+\Mcal(\mu) \right\}.
\]
Hence, $\Mcal^{\times}(\mu)$ has the same Hausdorff dimension as the above set, which is easier to handle. Moreover, the set $K$ is Ahlfors regular with dimension $\kappa$, so we may adapt the proof of~\cite[Proposition~7.9]{Falconer:2003oj} to show that for all $s>\kappa$, there exists a constant $c>0$, depending only  on $K$ and $s$, such that for any Borel subset $E$ of $\T^{2}$,
\[
\hau^{s}(E)\geq c\int_{K}\hau^{s-\kappa}(E\cap L_{\xi_{0}})\,\hau^{\kappa}(\dd\xi_{0}),
\]
where $L_{\xi_{0}}$ is the set of points $(\xi,\alpha)\in\T^{2}$ such that $\xi=\xi_{0}$. We now apply this result to the set $\Psi(\Mcal^{\times}(\mu))$. It is easy to see that for each $\xi_{0}\in\T$, there is a simple isometry which maps the intersection with $L_{\xi_{0}}$ of this last set onto the product set $\{0\}\times\Mcal(\mu)$. Consequently,
\[
\hau^{s}(\Psi(\Mcal^{\times}(\mu)))\geq c\hau^{s-\kappa}(\Mcal(\mu))\hau^{\kappa}(K).
\]
If $s-\kappa$ is less than $2/\mu$, we deduce from~(\ref{eq:JarnikBesicovitch}) that $\hau^{s-\kappa}(\Mcal(\mu))$ is infinite, so that the Hausdorff measure appearing in the left-hand side is infinite as well. It follows that the Hausdorff dimension of $\Mcal^{\times}(\mu)$ is bounded from below by $2/\mu+\kappa$.

For the reverse inequality, it suffices to find an appropriate covering of $\Psi(\Mcal^{\times}(\mu))$. To proceed, let us consider a positive real number $\eps$, and a point $(\xi,\alpha)$ in this last set. The irrationality exponent of $\alpha+\xi$ is then larger than $\mu-\eps$, so that for any integer $q_{0}\geq 1$, there is a rational number $p/q$ with denominator at least $q_{0}$ such that $\dist(\alpha+\xi,p/q)<q^{-\mu+\eps}$. Moreover, let $j(q)$ be the integer part of the base three logarithm of $q^{\mu-\eps}/2$. The set $K$ is naturally covered by $2^{j(q)}$ closed arcs with length $3^{-j(q)}$; let $x_{j(q),0},\ldots,x_{j(q),2^{j(q)}-1}$ denote their centers. Given that $\xi$ belongs to $K$, we have $\dist(\xi,x_{j(q),k})\leq 3^{-j(q)}/2$ for some $k$. Furthermore, making use of the triangle inequality, we deduce that
\[
\dist\Bigl(\alpha,\frac{p}{q}-x_{j(q),k}\Bigr)\leq\dist\Bigl(\frac{p}{q}-\alpha,\xi\Bigr)+\dist(\xi,x_{j(q),k})<q^{-\mu+\eps}+\frac{3^{-j(q)}}{2}\leq 3^{-j(q)}.
\]
If $\T^{2}$ is equipped with the product distance, it follows that the point $(\xi,\alpha)$ belongs to the open ball with radius $3^{-j(q)}$ centered at $(x_{j(q),k},p/q-x_{j(q),k})$, which is denoted by $B_{q,p,k}$. As a result, for any $\eps>0$ and $q_{0}\geq 1$,
\[
\Psi(\Mcal^{\times}(\mu))\subseteq\bigcup_{q=q_{0}}^{\infty}\bigcup_{p=0}^{q-1}\bigcup_{k=0}^{2^{j(q)}-1} B_{q,p,k}.
\]
Let $s$ and $\delta$ denote two positive real numbers. For $q_{0}$ large enough, we deduce from the above covering that
\[
\hau^{s}_{\delta}(\Psi(\Mcal^{\times}(\mu)))\leq\sum_{q=q_{0}}^{\infty} q2^{j(q)} (2\cdot 3^{-j(q)})^{s}\leq 3^{s}2^{2s-\kappa}\sum_{q=q_{0}}^{\infty} q^{1+(\mu-\eps)(\kappa-s)},
\]
and the last series converges when $s$ is larger than $2/(\mu-\eps)+\kappa$. The required upper bound on the Hausdorff dimension of $\Mcal^{\times}(\mu)$ now follows straightforwardly.
\end{proof}

\subsubsection{The exponents $v_{b}$}

Given an integer $b\geq 2$ and a real $v\geq 0$, the above method enables one to study the size properties of the set
\[
\Vcal^{\times}_{b}(v)=\left\{ (\xi,\alpha)\in\T^{2} \:\bigl|\: \xi\in\alpha+K \text{ and } v_{b}(\xi)\geq v \right\},
\]
which is the analog of the set $\Mcal^{\times}(\mu)$ for the exponent $v_{b}$ related to the expansion to the base $b$. To be precise, making the obvious changes to the last proof, one easily checks that the Hausdorff dimension of this set is given by
\[
\Hdim\Vcal^{\times}_{b}(v)=\frac{1}{v+1}+\kappa.
\]
As a consequence, for almost every angle $\alpha\in\T$ in the sense of Lebesgue measure, we also plainly have
\begin{equation}\label{eq:genupbndVbvalphaKweak}
\Hdim(\Vcal_{b}(v)\cap(\alpha+K))\leq\max\left\{\frac{1}{v+1}+\kappa-1, 0\right\}.
\end{equation}
Again, this bound is much more stringent than the uniform bound 
which follows from the arguments of Weiss~\cite{Weiss:2001mz} and Kristensen~\cite{Kristensen:2006ys}, specifically,
\[
\forall\alpha\in\T \qquad \Hdim(\Vcal_{b}(v)\cap(\alpha+K))\leq\frac{\kappa}{v+1}.
\]
Finally, making use of the results of Section~\ref{sec:randomgene}, we may establish the following analog of Theorem~\ref{thm:genupbndMcalmualphaK}, thereby obtaining a slightly more precise statement than~(\ref{eq:genupbndVbvalphaKweak}).

\begin{thm}\label{thm:genupbndVbvalphaK}
Let $b\geq 2$ be an integer. The following holds for Lebesgue almost every angle $\alpha\in\T$:
\begin{enumerate}
\item for any point $\xi\in\alpha+K$,
\[
0\leq v_{b}(\xi)\leq\frac{\kappa}{1-\kappa}\,;
\]
\item for any real number $v\in[0,\kappa/(1-\kappa)]$,
\begin{equation}\label{eq:genupbndVbvalphaK}
\Hdim(\Vcal_{b}(v)\cap(\alpha+K))\leq\frac{1}{v+1}+\kappa-1.
\end{equation}
\end{enumerate}
\end{thm}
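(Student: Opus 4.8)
The plan is to reduce Theorem~\ref{thm:genupbndVbvalphaK} to the general framework of Section~\ref{sec:randomgene}, following exactly the recipe already used for Theorem~\ref{thm:genupbndMcalmualphaK}. The key observation is that approximation by the digit-block condition defining $v_b$ is, after fixing the base $b$, an approximation by a deterministic family of rational points with prescribed radii, and randomizing the rotation angle $\alpha$ turns these centers into uniformly distributed circle-valued random points.

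First I would set up the point system. For a real $v\geq 0$ and $\xi\in\T$, the condition $v_b(\xi)\geq v$ means $\|b^j\xi\|<b^{-vj}$ for infinitely many $j$, i.e.\ $\dist(\xi,p/b^j)<b^{-(v+1)j}$ for infinitely many pairs $(p,j)$. So I would enumerate the relevant centers $p/b^{j}$ (with $0\le p<b^{j}$) as a sequence, set the radii to be the corresponding $b^{-(v+1)j}$, and define $X_n = c_n-\alpha$ where $c_n$ runs through these triadic-type centers. As in the passage preceding~(\ref{eq:inclMmualphaK}), when $\alpha$ is Lebesgue-distributed the points $X_n$ are uniformly distributed on the circle. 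The inclusion analogous to~(\ref{eq:inclMmualphaK}) reads, for every $\eps>0$,
\[
\Vcal_b(v)\cap(\alpha+K)\subseteq\alpha+\bigl(\Ecal(\Xseq,\rseq_{v-\eps})\cap K\bigr),
\]
where $\rseq_{v-\eps}$ is the radius sequence built from exponent $v-\eps$; indeed if $\xi\in\alpha+K$ has $v_b(\xi)\geq v$ then $\xi-\alpha\in K$ is approximated infinitely often at rate $v-\eps$ by the centers $c_n$, so $\xi-\alpha\in\Ecal(\Xseq,\rseq_{v-\eps})\cap K$.

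Next I would invoke Theorem~\ref{thm:majsizegene}. Since $K$ is Ahlfors regular with dimension $\kappa$ we have $\prepack^\kappa(K)<\infty$, so the theorem applies. I expect it to yield, with probability one over $\alpha$, that $\Ecal(\Xseq,\rseq_{v-\eps})\cap K=\emptyset$ once the rate forces the critical dimension below zero — giving the emptiness threshold $v>\eps+\kappa/(1-\kappa)$ and hence part~(1) after letting $\eps\to0$ along a countable sequence — and that $\hau^{s}(\Ecal(\Xseq,\rseq_{v-\eps})\cap K)=0$ whenever $s>\tfrac{1}{(v-\eps)+1}+\kappa-1$, which upon combining with the inclusion above and letting $\eps\to0$ yields the dimension bound~(\ref{eq:genupbndVbvalphaK}) in part~(2). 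The only genuine point requiring care, and the main obstacle, is verifying that the radius exponent $v+1$ matches the normalization used in Theorem~\ref{thm:majsizegene}: there the threshold is governed by the counting exponent of balls of radius $r_n$, so I must check that the number of centers $p/b^j$ with a given radius scales like (radius)$^{-1}$, i.e.\ that the system has the right ``dimension one'' density. For the irrationality exponent this was automatic because $\#\Pcal^j$ grows like $3^{2j}$ giving the factor $2/\mu$; here the centers $p/b^j$ number $b^j$ against radius $b^{-(v+1)j}$, which produces the exponent $1/(v+1)$ rather than $2/\mu$, accounting precisely for the numerator $1$ in~(\ref{eq:genupbndVbvalphaK}). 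Once this bookkeeping is confirmed to fit the hypotheses of Theorem~\ref{thm:majsizegene}, the two implications transfer verbatim and the proof concludes exactly as in the derivation of Theorem~\ref{thm:genupbndMcalmualphaK}.
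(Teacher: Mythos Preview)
Your proposal is correct and follows essentially the same route as the paper, which simply states that Theorem~\ref{thm:genupbndVbvalphaK} is obtained ``making use of the results of Section~\ref{sec:randomgene}'' in exact analogy with the proof of Theorem~\ref{thm:genupbndMcalmualphaK}. Your bookkeeping is right: with $b^{j}$ centers at level $j$ and base radius $b^{-j}$, the critical exponent in~(\ref{eq:sumrnnu}) is $\rho=1$, the approximation rate is $r_{n}^{v+1}$, and Theorem~\ref{thm:majsizegene} with $g(r)=r^{\kappa}$ gives convergence of $\sum r_{n}^{(v-\eps+1)(1-\kappa)}$ precisely when $v>\eps+\kappa/(1-\kappa)$ and of $\sum r_{n}^{(v-\eps+1)(s+1-\kappa)}$ when $s>1/(v-\eps+1)+\kappa-1$, yielding both parts after letting $\eps\to 0$ along a countable sequence.
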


When $b$ is equal to three, the above result is of course in stark contrast with the formula~(\ref{eq:dimKv3geq}) obtained by Levesley, Salp and Velani, which corresponds to the case of the original Cantor set where $\alpha=0$. This is again due to the fact that the endpoints of the Cantor set, which are triadic rational numbers, lose their privileged role in the approximation when the Cantor set is rotated.

When $b$ is not a power of three, it is expected that there is very little interaction between the expansions to the bases $b$ and three. Hence, the points of the Cantor set should not satisfy particular properties with respect to the approximation by $b$-adic rationals. Therefore, the Hausdorff dimension of the intersection set $\Vcal_{b}(v)\cap(\alpha+K)$ should be the same for $\alpha=0$ as for a generic value of $\alpha$. Conjecture~\ref{conj:vbnot3} is thus equivalent to the fact that Theorem~\ref{thm:genupbndVbvalphaK} is still valid when $\alpha$ vanishes, and that~(\ref{eq:genupbndVbvalphaK}) is not only an upper bound, but an equality.

\subsection{Conjecture~\ref{conj:main} holds for a probabilistic counterpart of the irrationality exponent}\label{subsec:probconjmain}

As mentioned in Section~\ref{sec:intro}, from the viewpoint of metric number theory, the points with rational coordinates and a sequence of random points chosen independently and uniformly in a given nonempty compact set satisfy a lot of common properties: they both lead to homogeneous ubiquitous systems, and to a variety of sets which share the same size and large intersection properties, see~\cite{Durand:2007uq,Durand:2010fk} and the references therein. Starting from this remark, we shall replace the approximating rational numbers by a sequence of random points which is intended to mimic the distribution of rational numbers and to take into account the fact that some rational numbers fall into the Cantor set exactly, or are very close to it; we shall then show that Conjecture~\ref{conj:main} is verified for this random model.

Let us now detail our model. Let $\Pcal_{K}$ denote a subset of $\Pcal$. In view of making the connection with Conjecture~\ref{conj:main}, we intend the set $\Pcal_{K}$ to contain the pairs $(p,q)$ in $\Pcal$ such that the rational number $p/q$ is exactly in $K$, or very close thereto. In particular, we intend $\Pcal_{K}$ to contain the set $\Pcal_{K}^{0}$ defined by~(\ref{eq:defPK0}). For this reason, and in view of~(\ref{eq:conjBFR1bis}), we assume from now on that $\Pcal_{K}$ is a subset of $\Pcal$ that satisfies
\begin{equation}\label{eq:densityPK}
\sigma(\Pcal_{K})=\kappa.
\end{equation}

We may now consider a family $(Y_{p,q})_{(p,q)\in\Pcal}$ of independent random variables in the circle such that:
\begin{itemize}
\item if $(p,q)\in\Pcal_{K}$, then $Y_{p,q}$ is distributed uniformly in $K$, that is, according to the $\kappa$-dimensional Hausdorff measure restricted to $K$;
\item if $(p,q)\not\in\Pcal_{K}$, then $Y_{p,q}$ is distributed uniformly in $\T$, that is, according to the Lebesgue measure.
\end{itemize}
Instead of considering the approximation by rational numbers, we shall study the approximation by those random points $Y_{p,q}$. Specifically, we are interested in the size properties of the random subsets $\Fcal(\mu)$ of $\T$ defined by
\begin{equation}\label{eq:defFcalmu}
\Fcal(\mu) = \left\{\xi\in\T \:\bigl|\: d(\xi,Y_{p,q}) < q^{-\mu} \text{~for i.m.~} (p,q)\in\Pcal \right\}.
\end{equation}
The mapping $\mu\mapsto\Fcal(\mu)$ is obviously nonincreasing, and for any point $\xi\in\T$, we may define
\begin{equation}\label{eq:defmubullet} 
\mu^{\bullet}(\xi) = \sup\{\mu\geq 0 \:|\: \xi\in\Fcal(\mu)\},
\end{equation}
which is the analog of the irrationality exponent for the approximation by the random points $Y_{p,q}$.

The philosophy behind the above random model is the following. We believe that the pairs $(p,q)\in\Pcal$ such that $p/q$ is exactly in, or very close to, the Cantor set $K$ form a set with density parametrized by $\kappa$. Thus, we choose a subset $\Pcal_{K}$ of $\Pcal$ that is intended to contain those pairs, and we assume that~(\ref{eq:densityPK}) holds. In particular, $\Pcal_{K}$ has low density in $\Pcal$. We then randomize the situation: we replace the vast majority of the rational numbers by random points that are chosen uniformly in the circle $\T$, and we also introduce a slight bias in the distribution in the sense that the rational numbers that are indexed by a pair in $\Pcal_{K}$ are replaced by random points that are chosen uniformly in $K$. The rate of approximation $q^{-\mu}$ is left unchanged.

With the help of the results obtained in Section~\ref{sec:approxindep} below, we may now establish Conjecture~\ref{conj:main} above in this randomized situation. This amounts to proving that~(\ref{eq:conjmain}) holds when the set $\Mcal(\mu)$ is replaced by its random counterpart
\[
\Mcal^{\bullet}(\mu)=\{\xi\in\T \:|\: \mu^{\bullet}(\xi)\geq\mu\}\,;
\]
this results in the following statement. Note that we may apply the results of Section~\ref{sec:approxindep} because the set $K$ is Ahlfors regular with dimension $\kappa$.

\begin{thm}\label{thm:conjmainrand}
The following holds with probability one:
\begin{enumerate}
\item\label{thm:conjmainrand1} for any point $\xi\in K$,
\[
\mu^{\bullet}(\xi)\geq 2\,;
\]
\item\label{thm:conjmainrand2} for any real number $\mu\geq 2$,
\[
\Hdim(\Mcal^{\bullet}(\mu)\cap K)=\max\left\{ \frac{2}{\mu} +  \kappa - 1, \frac{\kappa}{\mu} \right\}.
\]
\end{enumerate}
\end{thm}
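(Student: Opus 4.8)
The statement is a probabilistic analog of Conjecture~\ref{conj:main}, and the idea is to deduce it from the general random-approximation machinery of Section~\ref{sec:approxindep} applied to the independent family $(Y_{p,q})$. The key structural observation is that the random set $\Mcal^{\bullet}(\mu)$ splits according to the two types of indices: those in $\Pcal_K$, whose points $Y_{p,q}$ are distributed uniformly on $K$ and stay inside $K$, and those outside $\Pcal_K$, whose points are spread uniformly on the whole circle. Concretely, writing $\rseq=(r_{p,q})$ with $r_{p,q}=q^{-\mu}$, one has (up to the usual $\eps$-losses in $\mu$) that
\[
\Mcal^{\bullet}(\mu)\cap K
=\bigl(\Ecal(\Xseq_K,\rseq_K)\cap K\bigr)\cup\bigl(\Ecal(\Xseq_{\T},\rseq_{\T})\cap K\bigr),
\]
where $\Xseq_K=(Y_{p,q})_{(p,q)\in\Pcal_K}$ and $\Xseq_{\T}=(Y_{p,q})_{(p,q)\notin\Pcal_K}$, and the two subfamilies are independent with the prescribed uniform laws. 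The Hausdorff dimension of a finite union is the maximum of the dimensions, so it suffices to compute the dimension of each piece separately; the claimed maximum $\max\{2/\mu+\kappa-1,\kappa/\mu\}$ should arise exactly as the larger of these two contributions.

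\textbf{The two contributions.}
First I would treat the \emph{circle} part $\Ecal(\Xseq_\T,\rseq_\T)\cap K$. Here the approximating points are independent and uniform on $\T$, the target $K$ is Ahlfors regular with dimension $\kappa$, and the radii are $q^{-\mu}$ with the counting function $\#\Pcal^{j}\asymp 3^{2j}$. Feeding these data into the general dimension formula of Section~\ref{sec:approxindep} should yield dimension $2/\mu+\kappa-1$ (the generic, ``no-interplay'' value, consistent with Theorem~\ref{thm:genupbndMcalmualphaK} obtained by rotating $K$). Second I would treat the \emph{Cantor} part $\Ecal(\Xseq_K,\rseq_K)\cap K$: now the points $Y_{p,q}$ live in $K$ itself and are distributed according to $\hau^\kappa|_K$, while the index set $\Pcal_K$ has density $\sigma(\Pcal_K)=\kappa$ by~(\ref{eq:densityPK}), so $\#(\Pcal_K\cap\Pcal^j)$ grows like $3^{\kappa j}$. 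One is thus approximating points of the $\kappa$-regular set $K$ by random points \emph{inside} $K$, at rate $q^{-\mu}$; the relevant ambient dimension is $\kappa$, the regular target again has dimension $\kappa$, and the effective counting exponent is $\kappa$ rather than $2$. Reapplying the general formula (with $K$ playing the role of the regular space) should give dimension $\kappa/\mu$, matching the Levesley–Salp–Velani lower bound~(\ref{eq:dimKMmulow}). Combining, the dimension of the union is $\max\{2/\mu+\kappa-1,\kappa/\mu\}$, as required. For part~(1), one needs $\mu^{\bullet}(\xi)\ge 2$ for every $\xi\in K$ almost surely; this should follow from a Borel--Cantelli / ubiquity argument for the $\Pcal_K$-indexed points on $K$, using that $K$ with the measure $\hau^\kappa|_K$ and the radii $q^{-\mu}$ forms a homogeneous ubiquitous system at exponent $\mu=2$, exactly as the rationals do on $\R$.

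\textbf{The main obstacle.}
The delicate point is not the upper bounds, which follow from the covering estimates of Section~\ref{sec:randomgene} together with the density hypothesis~(\ref{eq:densityPK}), but the matching \emph{lower} bounds, and in particular verifying the hypotheses of the general independent-approximation theorem for \emph{each} regime simultaneously and almost surely for all $\mu$ at once. The genuine work is threefold: (i) checking that the convergence/divergence thresholds in the general theorem are met with the given counting exponents ($2$ for the $\T$-part, $\kappa$ for the $K$-part), so that the mass-transference-type lower bound produces full $\hau^s$-mass at the correct $s$; (ii) ensuring the two independent families can be handled on a single probability-one event valid for every real $\mu\ge 2$, which requires the usual countable-intersection-over-rational-$\mu$ argument combined with monotonicity of $\mu\mapsto\Fcal(\mu)$ to pass to all real $\mu$; and (iii) confirming that the $K$-part really falls within the scope of the general theory when the approximating points are constrained to lie in the regular set $K$ rather than being spread over $\T$ --- i.e.\ that the framework of Section~\ref{sec:approxindep} applies with the Ahlfors regular set itself serving as the underlying space. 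Once these verifications are in place, the two dimension computations and the ubiquity lower bound of part~(1) assemble into the stated result.
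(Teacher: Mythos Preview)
Your overall strategy---splitting $\Fcal(\mu)$ into the $\Pcal_K$-indexed and $\Pcal_K^\complement$-indexed pieces, computing the dimension of each intersection with $K$ separately, and taking the maximum---is exactly what the paper does (its Lemmas~\ref{lem:conjmainrandK} and the following lemma, invoking Propositions~\ref{prp:approxunifT} and~\ref{prp:approxunifGregular}). Your identification of the two dimension values $\kappa/\mu$ and $2/\mu+\kappa-1$ is also correct.

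There is, however, a genuine error in your treatment of part~(\ref{thm:conjmainrand1}). You claim that $\mu^{\bullet}(\xi)\geq 2$ for $\xi\in K$ should come from the $\Pcal_K$-indexed points, ``using that $K$ with the measure $\hau^\kappa|_K$ and the radii $q^{-\mu}$ forms a homogeneous ubiquitous system at exponent $\mu=2$, exactly as the rationals do on $\R$.'' This is false. By the hypothesis~(\ref{eq:densityPK}), the $\Pcal_K$-indexed family has counting exponent $\rho=\kappa$, not $2$; viewed inside the $\kappa$-regular set $K$, these points cover $K$ only up to exponent $\rho/\gamma=\kappa/\kappa=1$ (this is precisely the content of Lemma~\ref{lem:EXrcoverG} and Proposition~\ref{prp:approxunifGregular}). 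So the $K$-part yields only $\mu^{\bullet}_K(\xi)\geq 1$, and your proposed argument for part~(\ref{thm:conjmainrand1}) would fail.

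The lower bound $\mu^{\bullet}(\xi)\geq 2$ actually comes from the \emph{other} piece: the $\Pcal_K^\complement$-indexed points are uniform on $\T$ with full counting exponent $\rho=2$, and Shepp's theorem (via~(\ref{eq:coverunifT}) and Proposition~\ref{prp:approxunifT}) gives $\mu^{\bullet}_{K^\complement}(\xi)\geq 2$ for every $\xi\in\T$, hence in particular for every $\xi\in K$. Since $\mu^{\bullet}=\max\{\mu^{\bullet}_K,\mu^{\bullet}_{K^\complement}\}$, this suffices. Once you redirect part~(\ref{thm:conjmainrand1}) to the circle-uniform subfamily, the rest of your plan goes through and matches the paper's proof.
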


The remainder of this section is devoted to the proof of Theorem~\ref{thm:conjmainrand}, modulo Propositions~\ref{prp:approxunifT} and~\ref{prp:approxunifGregular} below. Let $\Fcal_{K}(\mu)$ and $\Fcal_{K^{\complement}}(\mu)$ denote the sets obtained when replacing $\Pcal$ by $\Pcal_{K}$ and its complement $\Pcal_{K}^{\complement}$, respectively, in the definition~(\ref{eq:defFcalmu}) of $\Fcal(\mu)$. Then, we may decompose this last set in the following manner:
\[
\Fcal(\mu)=\Fcal_{K}(\mu)\cup\Fcal_{K^{\complement}}(\mu).
\]
Note that the two sets forming the above union are not necessarily disjoint. This enables us 
to rewrite the exponent $\mu^{\bullet}(\xi)$ in the form
\[
\mu^{\bullet}(\xi)=\max\{\mu^{\bullet}_{K}(\xi),\mu^{\bullet}_{K^{\complement}}(\xi)\},
\]
where the exponents $\mu^{\bullet}_{K}(\xi)$ and $\mu^{\bullet}_{K^{\complement}}(\xi)$ are defined by replacing $\Fcal(\mu)$ by $\Fcal_{K^{\complement}}(\mu)$ and $\Fcal_{K^{\complement}}(\mu)$, respectively, in~(\ref{eq:defmubullet}). The proof of Theorem~\ref{thm:conjmainrand} now reduces to showing the next two lemmas.

\begin{lem}\label{lem:conjmainrandK}
The following holds with probability one:
\begin{enumerate}
\item for any point $\xi\in K$,
\[
\mu^{\bullet}_{K}(\xi)\geq 1\,;
\]
\item for any real number $\mu\geq 1$,
\[
\Hdim\{\xi\in K \:|\: \mu^{\bullet}_{K}(\xi)\geq\mu\}=\frac{\kappa}{\mu}.
\]
\end{enumerate}
\end{lem}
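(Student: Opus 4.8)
The plan is to recognize that Lemma~\ref{lem:conjmainrandK} concerns only the random points $Y_{p,q}$ indexed by pairs $(p,q)\in\Pcal_{K}$, which are distributed uniformly in $K$. Thus the approximating points all lie inside the Ahlfors regular set $K$, and the approximation takes place within $K$ itself rather than in the ambient circle. This is exactly the setting of the results announced for Section~\ref{sec:approxindep}: we are looking at the limsup set $\Ecal(\Xseq,\rseq)\cap K$ where $\Xseq=(Y_{p,q})_{(p,q)\in\Pcal_{K}}$ is a sequence of independent points drawn according to the normalized $\kappa$-dimensional Hausdorff measure on $K$, and the radii are $r_{p,q}=q^{-\mu}$. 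I would therefore reduce the lemma to an application of Proposition~\ref{prp:approxunifGregular} (approximation by points distributed uniformly in a regular compact set $G=K$), which I am entitled to assume.

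First I would set up the indexing: enumerate $\Pcal_{K}$ and, because the relevant scale is governed by the denominator $q$, group the pairs according to the dyadic-in-base-three blocks $\Pcal^{j}$, i.e.\ those $(p,q)$ with $3^{j}\leq q<3^{j+1}$. The density hypothesis~(\ref{eq:densityPK}), namely $\sigma(\Pcal_{K})=\kappa$, translates into the statement that $\#(\Pcal_{K}\cap\Pcal^{j})$ is, up to subexponential factors, of order $3^{\kappa j}=2^{j}$. The radii attached to these pairs are all comparable to $3^{-\mu j}$. The key computation is then the one that feeds Proposition~\ref{prp:approxunifGregular}: the convergence or divergence of the series $\sum_{(p,q)\in\Pcal_{K}} \hau^{\kappa}(K\cap\opball{Y_{p,q}}{r_{p,q}})^{?}$ and of the associated $s$-dimensional sums. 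Concretely, summing $3^{-s\mu j}$ over the roughly $2^{j}$ pairs in each block gives $\sum_{j}2^{j}3^{-s\mu j}=\sum_{j}3^{(\kappa-s\mu)j}$, which converges precisely when $s>\kappa/\mu$ and diverges when $s<\kappa/\mu$. This pins the critical exponent at $\kappa/\mu$, matching the claimed dimension.

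From here the two assertions of the lemma follow from the general theory. For the upper bound on the dimension I would invoke the Hausdorff--Cantelli/convergence part of Proposition~\ref{prp:approxunifGregular}: when $s>\kappa/\mu$ the above series converges, so $\hau^{s}(\{\xi\in K:\mu^{\bullet}_{K}(\xi)\geq\mu\})=0$ and hence $\Hdim\leq\kappa/\mu$. For the matching lower bound, and simultaneously for part~(1) asserting $\mu^{\bullet}_{K}(\xi)\geq 1$ for every $\xi\in K$, I would use the divergence part, which by the homogeneous ubiquity property guarantees that with probability one the limsup set has full $\hau^{\kappa}$-measure in $K$ when $\mu=1$ and has dimension exactly $\kappa/\mu$ in general. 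The clean bookkeeping point is that the critical exponent $\kappa/\mu$ is the dimension $\kappa$ of the target set $K$ divided by the exponent $\mu$, which is exactly the output of the uniform-in-$G$ approximation result.

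The main obstacle I anticipate is not any single estimate but the passage from the density hypothesis $\sigma(\Pcal_{K})=\kappa$, which is only a $\limsup$ statement about cardinalities, to a clean divergence of the relevant series that is uniform enough to trigger the ubiquity-type lower bound. The $\limsup$ in~(\ref{eq:defsigmaPK}) controls infinitely many blocks $\Pcal^{j}$ from above on a subsequence, but for the divergence half I need enough blocks to actually contain close to $2^{j}$ pairs; care is needed to ensure the random covering argument sees a divergent contribution along the right subsequence of scales. I expect Proposition~\ref{prp:approxunifGregular} to be formulated precisely so as to absorb this subtlety, so the real work is to verify that the hypotheses of that proposition are met by the family $(Y_{p,q})_{(p,q)\in\Pcal_{K}}$ with radii $q^{-\mu}$, and then to translate its conclusion back into the language of the exponent $\mu^{\bullet}_{K}$.
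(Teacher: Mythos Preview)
Your approach is essentially the paper's own: enumerate $\Pcal_{K}$, set $r_{n}=1/q_{n}$, observe that $\Fcal_{K}(\mu)$ is distributed as $\Ecal(\Xseq,\rseq^{\mu})$ with $\Xseq$ i.i.d.\ uniform in $K$, and invoke Proposition~\ref{prp:approxunifGregular} together with the density hypothesis~(\ref{eq:densityPK}). The one point where you can streamline is your worry about the $\limsup$ in $\sigma(\Pcal_{K})$: the paper dispatches this in one line by noting that the definition~(\ref{eq:defsigmaPK}) of $\sigma$ is exactly what makes~(\ref{eq:sumrnnu}) hold with $\rho=\sigma(\Pcal_{K})=\kappa$ (the $\limsup$ upper bound gives convergence for $\nu>\kappa$, and having infinitely many $j$ with large blocks forces the terms not to vanish when $\nu<\kappa$), so no subsequence bookkeeping is needed before feeding the data into Proposition~\ref{prp:approxunifGregular}.
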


\begin{proof}
Let $\Xseq=(X_{n})_{n\geq 1}$ be a sequence of random points that are independently and uniformly chosen in the Cantor set $K$, and $\rseq=(r_{n})_{n\geq 1}$ be the sequence defined by $r_{n}=1/q_{n}$, where $(p_{n},q_{n})_{n\geq 1}$ is an enumeration of $\Pcal_{K}$ for which $(q_{n})_{n \geq 1}$ is nondecreasing. Note that~(\ref{eq:sumrnnu}) holds with $\rho=\sigma(\Pcal_{K})$, because of the definition~(\ref{eq:defsigmaPK}) of this parameter. It is now easy to see that the sets $\Fcal_{K}(\mu)$ are distributed as the sets $\Ecal(\Xseq,\rseq^{\mu})$ defined as in~(\ref{eq:defEcalXr}). As a consequence, the exponent $\mu^{\bullet}_{K}(\xi)$ is distributed as $\nu_{\Xseq,\rseq}(\xi)$. The result follows from Proposition~\ref{prp:approxunifGregular} below, along with~(\ref{eq:densityPK}).
\end{proof}

\begin{lem}
The following holds with probability one:
\begin{enumerate}
\item for any point $\xi\in K$,
\[
2\leq\mu^{\bullet}_{K^{\complement}}(\xi)\leq\frac{2}{1-\kappa}\,;
\]
\item for any real number $\mu\in[2,2/(1-\kappa)]$,
\[
\Hdim\{\xi\in K \:|\: \mu^{\bullet}_{K^{\complement}}(\xi)\geq\mu\}=\frac{2}{\mu}+ \kappa-1.
\]
\end{enumerate}
\end{lem}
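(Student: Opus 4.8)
The plan is to mirror the proof of Lemma~\ref{lem:conjmainrandK}, the only differences being that the approximating points are now spread over the whole circle rather than over $K$, and that they are indexed by the complementary set $\Pcal_{K}^{\complement}$. Concretely, I would let $\Xseq=(X_{n})_{n\geq 1}$ be a sequence of random points chosen independently and uniformly in the circle $\T$, and let $\rseq=(r_{n})_{n\geq 1}$ be defined by $r_{n}=1/q_{n}$, where $(p_{n},q_{n})_{n\geq 1}$ is an enumeration of $\Pcal_{K}^{\complement}$ for which $(q_{n})_{n\geq 1}$ is nondecreasing. By the very definition of the random variables $Y_{p,q}$, the sets $\Fcal_{K^{\complement}}(\mu)$ are distributed as the sets $\Ecal(\Xseq,\rseq^{\mu})$ of~(\ref{eq:defEcalXr}); hence the set $\{\xi\in K \:|\: \mu^{\bullet}_{K^{\complement}}(\xi)\geq\mu\}$ is distributed as $\Ecal(\Xseq,\rseq^{\mu})\cap K$, and the exponent $\mu^{\bullet}_{K^{\complement}}$ restricted to $K$ is distributed as $\nu_{\Xseq,\rseq}$.

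The decisive input is the value of the density parameter attached to $\rseq$. Since the total number of pairs in $\Pcal^{j}$ grows like $3^{2j}$, while~(\ref{eq:densityPK}) forces $\#(\Pcal_{K}\cap\Pcal^{j})$ to grow only like $3^{(\kappa+o(1))j}$, subtracting the latter from the former leaves $\#(\Pcal_{K}^{\complement}\cap\Pcal^{j})$ comparable to $3^{2j}$. Thus $\sigma(\Pcal_{K}^{\complement})=2$ by~(\ref{eq:defsigmaPK}), so that condition~(\ref{eq:sumrnnu}) holds with $\rho=2$. This is precisely where the present lemma departs from Lemma~\ref{lem:conjmainrandK}, for which the relevant density was $\kappa$: here the approximating points are essentially as dense as the full set of rationals. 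With these identifications in place, I would then invoke the circle-uniform counterpart, Proposition~\ref{prp:approxunifT}, applied with the target set $G=K$, which is Ahlfors regular with dimension $\kappa$, and with $\rho=2$. For $\mu<2$ the parameters place us in the regime where $\Ecal(\Xseq,\rseq^{\mu})$ almost surely covers $K$, yielding $\mu^{\bullet}_{K^{\complement}}(\xi)\geq 2$ for every $\xi\in K$; for $\mu>2/(1-\kappa)$ a first-moment estimate shows that only finitely many balls $\opball{X_{n}}{r_{n}^{\mu}}$ meet $K$, whence $\Ecal(\Xseq,\rseq^{\mu})\cap K=\emptyset$ and $\mu^{\bullet}_{K^{\complement}}(\xi)\leq 2/(1-\kappa)$. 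This gives assertion~(1). For the intermediate range $\mu\in[2,2/(1-\kappa)]$, the proposition delivers the exact value $\Hdim(\Ecal(\Xseq,\rseq^{\mu})\cap K)=\rho/\mu+\kappa-1=2/\mu+\kappa-1$, which is assertion~(2).

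The routine half of the analysis underlying Proposition~\ref{prp:approxunifT} is the upper bound. Covering $\Ecal(\Xseq,\rseq^{\mu})\cap K$ by those balls $\opball{X_{n}}{r_{n}^{\mu}}$ that meet $K$, and using that the Lebesgue measure of the $r$-neighborhood of $K$ is comparable to $r^{1-\kappa}$ by Ahlfors regularity, one finds that the expected number of such balls with $q_{n}\asymp 3^{j}$ is of order $3^{(2-\mu(1-\kappa))j}$. Consequently the expected $\hau^{s}$-mass of the covering is summable as soon as $s>2/\mu+\kappa-1$, and a Borel--Cantelli argument then furnishes the upper bound on the dimension almost surely; the same computation, taken at the threshold, explains both the vanishing of $\Ecal(\Xseq,\rseq^{\mu})\cap K$ for $\mu>2/(1-\kappa)$ and the critical exponent $2$ below which $K$ is covered.

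The genuine difficulty, which I expect to be the main obstacle, lies in the matching lower bound: one must show that the random family $\{\opball{X_{n}}{r_{n}^{\mu}}\}$ forms a sufficiently homogeneous ubiquitous system relative to the Ahlfors regular set $K$, so that the resulting limsup set captures the full dimension $2/\mu+\kappa-1$. This is exactly what the general theory of Section~\ref{sec:randomgene}, together with the independence arguments of Section~\ref{sec:approxindep}, is designed to provide, and it is encapsulated in Proposition~\ref{prp:approxunifT}. Producing a uniform lower bound on the Hausdorff measure for a whole range of gauge functions, valid almost surely and simultaneously for all admissible $\mu$, is the technical heart of the matter; by contrast, the present lemma itself reduces, once the density $\sigma(\Pcal_{K}^{\complement})=2$ has been identified, to a direct application of that proposition.
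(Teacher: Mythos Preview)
Your proposal is correct and follows essentially the same route as the paper: set up $\Xseq$ uniform in $\T$ indexed by $\Pcal_{K}^{\complement}$, verify that $\rho=2$ via~(\ref{eq:sumrnnu}), identify $\mu^{\bullet}_{K^{\complement}}$ with $\nu_{\Xseq,\rseq}$, and invoke Proposition~\ref{prp:approxunifT} with $G=K$. The extra explanatory material you provide on the upper and lower bounds is accurate but not needed for the lemma itself, since it is all packaged into Proposition~\ref{prp:approxunifT}.
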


\begin{proof}
The proof is very similar to that of Lemma~\ref{lem:conjmainrandK}. Let $\Xseq=(X_{n})_{n\geq 1}$ denote a sequence of random points that are independently and uniformly chosen in the circle $\T$, and let $\rseq=(r_{n})_{n\geq 1}$ denote the sequence defined by $r_{n}=1/q_{n}$, where $(p_{n},q_{n})_{n\geq 1}$ is an enumeration of $\Pcal_{K}^{\complement}$ for which $q_{n}$ is nondecreasing. One easily checks that~(\ref{eq:sumrnnu}) holds with $\rho=2$ and that the sets $\Fcal_{K^{\complement}}(\mu)$ are distributed as the sets $\Ecal(\Xseq,\rseq^{\mu})$. Hence, the exponent $\mu^{\bullet}_{K^{\complement}}(\xi)$ is distributed as $\nu_{\Xseq,\rseq}(\xi)$, and it just remains to apply Proposition~\ref{prp:approxunifT} below.
\end{proof}

The above approach is quite flexible in the sense that~(\ref{eq:densityPK}) may be adapted in order to fit the true value of $\sigma(\Pcal_{K}^{0})$. In accordance with Broderick, Fishman and Reich~\cite{Broderick:2011fk}, we conjectured above that $\sigma(\Pcal_{K}^{0})$ is equal to $\kappa$. This lead us to assume~(\ref{eq:densityPK}), and then to prove Conjecture~\ref{conj:main} above for the present random model. However, the authors of~\cite{Broderick:2011fk} formulated a weaker conjecture than~(\ref{eq:conjBFR1}) for which they have even stronger evidence, namely, there exists a real $\varsigma<2$ such that
\[
\#(\Pcal_{K}^{0}\cap\Pcal^{j})={\rm O}(2^{\varsigma j}) \qquad\text{as}\qquad j\to\infty,
\]
see~\cite[Conjecture~2]{Broderick:2011fk}. The last bound would readily imply that $\sigma(\Pcal_{K}^{0})$ is between $\kappa$ and $\kappa\varsigma$. This entices us to relax~(\ref{eq:densityPK}) by just assuming that the set $\Pcal_{K}$ satisfies
\[
\kappa\leq\sigma(\Pcal_{K})<2\kappa.
\]
Inspecting the above proofs, it is easy to see that Theorem~\ref{thm:conjmainrand}(\ref{thm:conjmainrand1}) still holds under this weaker assumption, whereas Theorem~\ref{thm:conjmainrand}(\ref{thm:conjmainrand2}) has to be replaced by the following statement: for any $\mu\geq 2$,
\[
\Hdim(\Mcal^{\bullet}(\mu)\cap K)=\max\left\{ \frac{2}{\mu} +  \kappa - 1, \frac{\sigma(\Pcal_{K})}{\mu} \right\}.
\]
In particular, the validity of Conjecture~\ref{conj:main} for the random model is equivalent to that of~(\ref{eq:densityPK}).

\subsection{A probabilistic counterpart of the exponents $v_{b}$ and its connections with Conjecture~\ref{conj:vbnot3}}\label{subsec:probvb}

Let us now modify the preceding ideas in order to put forward a randomized situation that is adapted to the exponents $v_{b}$. This way, we shall derive an analog of the dimension result~(\ref{eq:dimKv3geq}) of Levesley, Salp and Velani~\cite{Levesley:2007pd} when $b$ is a power of three, and give some probabilistic arguments leading to Conjecture~\ref{conj:vbnot3} otherwise.

For any integer $j\geq 1$, let $\Kcal^{j}$ denote the set of all integers $k\in\{0,\ldots,b^{j}-1\}$ such that $\gcd(k,b^{j})=1$. Furthermore, let $\Kcal_{K}^{j}$ denote the set of all integers $k\in\Kcal^{j}$ for which the rational number $kb^{-j}$ is in the Cantor set $K$. Now, given $j\geq 1$ and $k\in\Kcal^{j}$, we consider a random point $Y_{j,k}$ that is distributed uniformly in $K$ when $k\in\Kcal_{K}^{j}$, and uniformly in $\T$ otherwise. We also assume that the points $Y_{j,k}$ are independently distributed. This enables us to introduce the sets
\[
\Fcal_{b}(v)=\bigl\{\xi\in\T \:\bigl|\: d(\xi,Y_{j,k}) < b^{-(v+1)j} \text{~for i.m.~} j\geq 1 \text{~and~} k\in\Kcal^{j} \bigr\},
\]
as well as, for any point $\xi\in\T$, the exponent
\[
v^{\bullet}_{b}(\xi)=\sup\{v\in\R \:|\: \xi\in\Fcal_{b}(v)\},
\]
which may be seen as a probabilistic counterpart of the exponent $v_{b}$. Thus, in this randomized setting, the analogs of the sets $\Vcal_{b}(v)$ defined by~(\ref{eq:defVbv}) are merely the sets
\[
\Vcal^{\bullet}_{b}(v)=\{\xi\in\R \:|\: v^{\bullet}_{b}(\xi)\geq v \}.
\]

\subsubsection{Case where $b$ is a power of three}

In that situation, for any integer $j\geq 1$, there are $2b^{j}/3$ integers in the set $\Kcal^{j}$. Moreover, there are exactly $b^{\kappa j}$ rational numbers with reduced denominator $b^{j}$ in the Cantor set $K$. In other words, the set $\Kcal_{K}^{j}$ has cardinality $b^{\kappa j}$. Making the obvious changes to the proof of Theorem~\ref{thm:conjmainrand}, we easily deduce the following statement.

\begin{thm}
Let $b$ be a power of three. The following holds with probability one:
\begin{enumerate}
\item for any point $\xi\in K$,
\[
v^{\bullet}_{b}(\xi)\geq 0\,;
\]
\item for any real number $v\geq 0$,
\[
\Hdim(\Vcal^{\bullet}_{b}(v)\cap K)=\frac{\kappa}{v+1}.
\]
\end{enumerate}
\end{thm}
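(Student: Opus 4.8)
The plan is to mimic, essentially verbatim, the proof of Theorem~\ref{thm:conjmainrand}, splitting the approximating family according to whether a given centre $Y_{j,k}$ is forced to lie in $K$ or is spread over all of $\T$. Writing $b=3^{m}$, I would decompose $\Fcal_b(v)=\Fcal_{b,K}(v)\cup\Fcal_{b,K^{\complement}}(v)$, where $\Fcal_{b,K}(v)$ retains only the pairs $(j,k)$ with $k\in\Kcal_K^{j}$ (so $Y_{j,k}$ is uniform in $K$) and $\Fcal_{b,K^{\complement}}(v)$ retains those with $k\in\Kcal^{j}\setminus\Kcal_K^{j}$ (so $Y_{j,k}$ is uniform in $\T$). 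Introducing the associated exponents $v^{\bullet}_{b,K}$ and $v^{\bullet}_{b,K^{\complement}}$ exactly as $\mu^{\bullet}_{K}$ and $\mu^{\bullet}_{K^{\complement}}$ were introduced there, one has $v^{\bullet}_b=\max\{v^{\bullet}_{b,K},v^{\bullet}_{b,K^{\complement}}\}$, whence $\Vcal^{\bullet}_b(v)\cap K$ is the union of the two corresponding sublevel sets and its Hausdorff dimension is the maximum of their dimensions.

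Next I would cast each piece into the framework of Section~\ref{sec:approxindep}. For the $K$-part, enumerate the pairs $(j,k)$ with $k\in\Kcal_K^{j}$ so that $j$ is nondecreasing, assign to each the base radius $b^{-j}$, and observe that $\Fcal_{b,K}(v)$ is distributed as $\Ecal(\Xseq,\rseq^{v+1})$ of~(\ref{eq:defEcalXr}) with $\Xseq$ uniform in $K$. The decisive computation is the density parameter $\rho$ of~(\ref{eq:sumrnnu}): since there are $\#\Kcal_K^{j}=b^{\kappa j}=(b^{-j})^{-\kappa}$ centres at level $j$, one gets $\rho=\kappa$. Proposition~\ref{prp:approxunifGregular}, applied to the Ahlfors regular set $K$ of dimension $\kappa$, then yields $v^{\bullet}_{b,K}(\xi)\geq 0$ for every $\xi\in K$ and $\Hdim\{\xi\in K \:|\: v^{\bullet}_{b,K}(\xi)\geq v\}=\kappa/(v+1)$. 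For the complement part the same procedure applies with $\Xseq$ uniform in $\T$; here the number of centres at level $j$ is $\#\Kcal^{j}-\#\Kcal_K^{j}=2b^{j}/3-b^{\kappa j}\sim(b^{-j})^{-1}$, so~(\ref{eq:sumrnnu}) holds with $\rho=1$. Proposition~\ref{prp:approxunifT} then gives $v^{\bullet}_{b,K^{\complement}}(\xi)\geq 0$ on $K$ and $\Hdim(\Fcal_{b,K^{\complement}}(v)\cap K)=1/(v+1)+\kappa-1$ whenever this is nonnegative (the set being empty otherwise).

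It then remains to recombine. Taking the maximum of the two dimensions gives $\Hdim(\Vcal^{\bullet}_b(v)\cap K)=\max\{\kappa/(v+1),\,1/(v+1)+\kappa-1\}$, and an elementary computation shows
\[
\frac{\kappa}{v+1}-\Bigl(\frac{1}{v+1}+\kappa-1\Bigr)=(\kappa-1)\Bigl(\frac{1}{v+1}-1\Bigr)=\frac{v(1-\kappa)}{v+1}\geq 0
\]
for every $v\geq 0$, since $\kappa<1$. Hence the $K$-part always dominates (and carries the full dimension even for $v>\kappa/(1-\kappa)$, where the complement part is empty), so the dimension equals $\kappa/(v+1)$, as claimed; the bound $v^{\bullet}_b(\xi)\geq 0$ on $K$ follows from either piece.

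The main obstacle is not analytic, since all the serious work — the ubiquity-type lower bounds and the Hausdorff--Cantelli covering upper bounds — is already packaged in Propositions~\ref{prp:approxunifT} and~\ref{prp:approxunifGregular}. The only genuine point of care is the correct identification of $\rho$ for each subfamily, and in particular the fact that here the complement part has $\rho=1$ rather than the value $\rho=2$ arising in the rational setting of Theorem~\ref{thm:conjmainrand}: at level $j$ there is a single denominator $b^{j}$ with only $\approx b^{j}$ admissible numerators, not $\approx b^{2j}$ pairs. Tracking this single change, together with the verification that $\kappa/(v+1)$ dominates throughout, is precisely what converts the proof of Theorem~\ref{thm:conjmainrand} into the present one.
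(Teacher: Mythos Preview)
Your proposal is correct and follows exactly the approach the paper intends: it says only ``making the obvious changes to the proof of Theorem~\ref{thm:conjmainrand}, we easily deduce the following statement,'' and what you have written is precisely that spelling-out, with the correct identification of the density parameters ($\rho=\kappa$ for the $K$-part, $\rho=1$ for the complement) and the verification that $\kappa/(v+1)\geq 1/(v+1)+\kappa-1$ for all $v\geq 0$.
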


When $b=3$, we thus recover the same formula for the Hausdorff dimension as in the original context of the approximation by the triadic rational numbers, that is, the formula for the mere exponent $v_{3}$, see~(\ref{eq:dimKv3geq}).

\subsubsection{Case where $b$ is not a power of three}

Here, the cardinality of the set $\Kcal^{j}$ is again of the order of $b^{j}$; specifically, it is equal to $b^{j}$ times the product of $1-1/p$ when $p$ ranges over the prime factors of $b$. However, we do not know the cardinality of the set $\Kcal_{K}^{j}$ anymore. It is believed that the base $b$ representation is essentially independent of that in base three, on which the construction of $K$ heavily relies; this entices us to make the following conjecture: for all $\eps>0$,
\begin{equation}\label{eq:conjbaseb}
\#\Kcal_{K}^{j}={\rm O}(2^{\eps j}) \qquad\text{as}\qquad j\to\infty.
\end{equation}
Assuming that~(\ref{eq:conjbaseb}) holds, and adapting the proof of Theorem~\ref{thm:conjmainrand}, we then infer that almost surely, for any point $\xi\in K$, the exponent $v^{\bullet}_{b}(\xi)$ is nonnegative. Moreover, with probability one, for any real $v$,
\begin{equation}\label{eq:dimvbbulletKlo}
0\leq v\leq\frac{\kappa}{1-\kappa} \qquad\Longrightarrow\qquad \Hdim(\Vcal^{\bullet}_{b}(v)\cap K)=\frac{1}{v+1}+\kappa-1.
\end{equation}
and 
\begin{equation}\label{eq:dimvbbulletKhi}
v>\frac{\kappa}{1-\kappa} \qquad\Longrightarrow\qquad \Hdim(\Vcal^{\bullet}_{b}(v)\cap K)\leq 0.
\end{equation}
In particular, with probability one, the set of points $\xi$ in $K$ for which the exponent $v^{\bullet}_{b}(\xi)$ is at least equal to $\kappa/(1-\kappa)$ is nonempty. Moreover, the discussion that precedes Proposition~\ref{prp:approxunifT} actually implies that this set is dense in $K$.

The above approach does not enable us to determine whether or not the dimension in~(\ref{eq:dimvbbulletKhi}) is equal to zero, that is, whether or not there exists a point $\xi\in K$ such that $v^{\bullet}_{b}(\xi)\geq v$, when $v$ is larger than $\kappa/(1-\kappa)$. However, a straightforward adaptation of the proof of Theorem~\ref{thm:conjmainrand} implies that
\[
\sum_{j=1}^{\infty}\#\Kcal_{K}^{j}<\infty \qquad \Longrightarrow \qquad \as \quad \forall\xi\in K \quad v^{\bullet}_{b}(\xi)\leq\frac{\kappa}{1-\kappa}.
\]
Thus, under a much stronger assumption than~(\ref{eq:conjbaseb}), our method shows that the dimension in~(\ref{eq:dimvbbulletKhi}) is equal to $-\infty$. In any case, deciding whether or not the dimension is zero in~(\ref{eq:dimvbbulletKhi}) certainly requires much more information on the sets $\Kcal_{K}^{j}$ than the mere~(\ref{eq:conjbaseb}).

As regards the original exponent $v_{b}$, we suspect that the dimension in~(\ref{eq:dimvbbulletKhi}) is equal to $-\infty$, meaning that $v_{b}$ is bounded from above by $\kappa/(1-\kappa)$ on the Cantor set $K$. Combined with~(\ref{eq:dimvbbulletKlo}), this is what lead us to Conjecture~\ref{conj:vbnot3} above.

\section{Approximation by uniform random points: general results}\label{sec:randomgene}

The purpose of this section is to study the situation in which the sequence of approximating points is chosen at random. For convenience, we work again on the circle $\T=\R/\Z$, endowed with the usual quotient distance $\dist$. Given a sequence $\Xseq=(X_{n})_{n \geq 1}$ of random variables in the circle $\T$ and a sequence $\rseq=(r_{n})_{n\geq 1}$ of real numbers in $(0,1]$, let us consider the random subset $\Ecal(\Xseq,\rseq)$ of $\T$ defined by
\begin{equation}\label{eq:defEcalXr}
\Ecal(\Xseq,\rseq) = \left\{\xi\in\T \:\bigl|\: d(\xi,X_{n}) < r_{n} \text{~for i.m.~} n\geq 1 \right\},
\end{equation}
and formed by the points that are approximated at a rate given by $r_{n}$ by the random points $X_{n}$. Our purpose is to study the probability with which the random set $\Ecal(\Xseq,\rseq)$ intersects a given nonempty compact set $G\subseteq\T$, and to describe the size properties of the intersection in the situation where it is nonempty. Such a description will be obtained by studying the value of the Hausdorff measures of the intersection $\Ecal(\Xseq,\rseq)\cap G$. Throughout this section, we assume that the random points $X_{n}$ are chosen according to the Lebesgue measure $\leb$ on the circle $\T$.

\subsection{Size of the intersection with a compact set: upper bounds}

By now, we do not make any assumption on the correlations between these random points. Our first result gives an upper bound on the size of the intersection of the random set $\Ecal(\Xseq,\rseq)$ with a fixed compact set $G$ whose size is controlled in terms of the finiteness of certain packing premeasures. We refer to Section~\ref{sec:proofthmmajsizegene} for its proof.

\begin{thm}\label{thm:majsizegene}
Let $G$ denote a nonempty compact subset of the circle, and let $g$ be a doubling gauge function such that $\prepack^{g}(G)<\infty$.
\begin{enumerate}
\item\label{thm:majsizegene1} The following holds:
\[
\sum_{n=1}^{\infty} \frac{r_{n}}{g(r_{n})}<\infty \qquad\Longrightarrow\qquad \as \quad \Ecal(\Xseq,\rseq)\cap G=\emptyset.
\]
\item\label{thm:majsizegene2} For any doubling gauge function $h$, the following holds:
\[
\sum_{n=1}^{\infty} \frac{h(r_{n})r_{n}}{g(r_{n})}<\infty \qquad\Longrightarrow\qquad \as \quad \hau^{h}(\Ecal(\Xseq,\rseq)\cap G)=0.
\]
\end{enumerate}
\end{thm}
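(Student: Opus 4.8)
The plan is to deduce both parts from a single geometric estimate that converts the finiteness of $\prepack^{g}(G)$ into a decay bound for the Lebesgue measure of the neighborhoods of $G$. First I would fix a small scale $\delta_{0}$ with $\prepack^{g}_{\delta_{0}}(G)\le C_{1}<\infty$ (possible since $\prepack^{g}_{\delta}(G)\downarrow\prepack^{g}(G)$), and for $r<\delta_{0}$ extract a maximal family $x_{1},\dots,x_{M}\in G$ with pairwise distances at least $r$. The closed arcs $\clball{x_{i}}{r/2}$ are then disjoint, centered in $G$, and of diameter $r$, so $M\,g(r)\le C_{1}$; by maximality the arcs $\opball{x_{i}}{r}$ cover $G$, whence the covering number of $G$ at scale $r$ is at most $C_{1}/g(r)$. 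Enlarging these arcs to radius $2r$ covers the neighborhood $G_{r}=\{\xi\:|\:\dist(\xi,G)<r\}$, which gives the central estimate
\[
\leb(G_{r})\le \frac{4C_{1}\,r}{g(r)}\qquad(r<\delta_{0}).
\]
Since $X_{n}$ is uniform on $\T$, the event that $\opball{X_{n}}{r_{n}}$ meets $G$ is exactly $\{X_{n}\in G_{r_{n}}\}$, so $\prob[\opball{X_{n}}{r_{n}}\cap G\ne\emptyset]=\leb(G_{r_{n}})\le 4C_{1}r_{n}/g(r_{n})$ once $r_{n}<\delta_{0}$. The hypotheses of both parts force $r_{n}\to0$, because $r/g(r)$ and $h(r)r/g(r)$ are bounded below on every interval $[\eps,1]$ (using $g(r)\le g(1)$ and $r,h(r)\ge$ const); hence all but finitely many indices fall in the regime $r_{n}<\delta_{0}$, which is all that matters below.

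For part~(\ref{thm:majsizegene1}), summing the above probabilities shows that the series $\sum_{n}\prob[\opball{X_{n}}{r_{n}}\cap G\ne\emptyset]$ converges, being dominated by $4C_{1}\sum_{n}r_{n}/g(r_{n})$ up to finitely many bounded terms. The first Borel--Cantelli lemma---which requires no independence, in keeping with the absence of correlation assumptions---then shows that, almost surely, only finitely many arcs $\opball{X_{n}}{r_{n}}$ meet $G$. In particular no point of $G$ can lie in infinitely many of them, so $\Ecal(\Xseq,\rseq)\cap G=\emptyset$ almost surely. This is genuinely stronger than a null-measure conclusion, which is why part~(\ref{thm:majsizegene1}) is not merely a special case of part~(\ref{thm:majsizegene2}).

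For part~(\ref{thm:majsizegene2}) I would run a first-moment argument on the Hausdorff content. Fix $\delta>0$; since $r_{n}\to0$, choose $N$ so large that $2r_{n}<\delta$ for $n\ge N$. Using $\Ecal(\Xseq,\rseq)\cap G\subseteq\bigcup_{n\ge N}(\opball{X_{n}}{r_{n}}\cap G)$ and covering each nonempty piece by the single arc $\opball{X_{n}}{r_{n}}$ (of diameter $2r_{n}<\delta$, contributing $h(2r_{n})\le C_{h}\,h(r_{n})$ by doubling of $h$) yields the admissible bound
\[
\hau^{h}_{\delta}(\Ecal(\Xseq,\rseq)\cap G)\le C_{h}\sum_{n\ge N}\ind\{\opball{X_{n}}{r_{n}}\cap G\ne\emptyset\}\,h(r_{n}).
\]
Taking expectations and inserting the neighborhood estimate gives $\esp[\hau^{h}_{\delta}(\Ecal(\Xseq,\rseq)\cap G)]\le 4C_{1}C_{h}\sum_{n\ge N}h(r_{n})r_{n}/g(r_{n})$. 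The left-hand side does not depend on $N$, so letting $N\to\infty$ against the convergent tail forces it to vanish; hence $\hau^{h}_{\delta}(\Ecal(\Xseq,\rseq)\cap G)=0$ almost surely for each fixed $\delta$. Intersecting the almost-sure events over $\delta=1/k$ and using $\hau^{h}=\sup_{\delta}\hau^{h}_{\delta}$ then gives $\hau^{h}(\Ecal(\Xseq,\rseq)\cap G)=0$ almost surely.

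The only genuinely delicate step is the geometric estimate of the first paragraph: turning the qualitative finiteness of the packing premeasure into the quantitative covering-number bound $M\le C_{1}/g(r)$, and thence into the decay of $\leb(G_{r})$. Everything afterwards is a routine first-moment computation requiring no independence. The one remaining bookkeeping point is the exchange of the limit $\delta\downarrow0$ with the almost-sure statements, handled by restricting to the countable family $\delta=1/k$ and using the monotonicity of $\hau^{h}_{\delta}$ in $\delta$.
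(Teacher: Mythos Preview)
Your argument is correct and follows the same overall strategy as the paper---extract from $\prepack^{g}(G)<\infty$ a covering-number bound of order $1/g(r)$ at scale $r$, then run a first-moment computation---but the execution differs in two places. First, the paper obtains the covering bound by invoking Tricot's characterization of finite packing premeasure (a finite measure $\chi$ with $\chi(\opball{x}{r})\geq g(r)$ for $x\in G$), whereas you reach it directly via a maximal $r$-separated subset of $G$; your route is more elementary and self-contained, while the paper's is shorter once one accepts the external lemma. Second, for the cover in part~(\ref{thm:majsizegene2}) the paper partitions $\T$ into arcs $I_{n,k}$ of length $\sim r_{n}$ and covers $\Ecal(\Xseq,\rseq)\cap G$ by those $I_{n,k}$ that both meet $G$ and lie near $X_{n}$, while you simply use the single arc $\opball{X_{n}}{r_{n}}$; the paper's finer partition is not needed here but matches the combinatorics it uses later for the weak-dependence condition $\Theta(\Xseq,\Bcal(\rseq))<\infty$. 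For part~(\ref{thm:majsizegene1}) the paper rederives emptiness by formally taking $h\equiv 1$ (counting measure), whereas your direct appeal to Borel--Cantelli is cleaner. One cosmetic point: to make the closed arcs $\clball{x_{i}}{r/2}$ genuinely disjoint you want strict separation $d(x_{i},x_{j})>r$ (or pass to radii $r'/2$ with $r'\uparrow r$ and use right-continuity of $g$); this does not affect the bound.
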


\subsection{Size of the intersection with a compact set: lower bounds under a weak dependence condition}\label{subsec:randomgenelow}

Our purpose is now to give a converse to Theorem~\ref{thm:majsizegene}(\ref{thm:majsizegene1}), under the assumption that the random approximating points $X_{n}$ are independent or somewhat close to being so, in the following sense. First, for any sequence $\Bseq=(B_{n})_{n\geq 1}$ of Borel subsets of the circle $\T$ 
with positive Lebesgue measure, let us set
\[
\theta(\Xseq,\Bseq)=\sup_{n\geq 1}\frac{\prob(X_{1}\in B_{1},\ldots,X_{n}\in B_{n})}{\prob(X_{1}\in B_{1})\cdot\ldots\cdot\prob(X_{n}\in B_{n})}.
\]
Plainly, $\theta(\Xseq,\Bseq)$ is always at least one, and is equal to one regardless of the choice of the sequence $\Bseq$ when the variables $X_{n}$ are independent. A way of relaxing the independence assumption is then to impose a control on the maximal ratios $\theta(\Xseq,\Bseq)$ by supposing that
\begin{equation}\label{eq:defThetaXB}
\Theta(\Xseq,\Bcal)=\sup_{\Bseq\in\Bcal}\theta(\Xseq,\Bseq)<\infty,
\end{equation}
where $\Bcal$ denotes an appropriately chosen collection of sequences of Borel sets. Note that the above condition is more stringent as $\Bcal$ becomes larger.

The collection on which we shall impose a control is denoted by $\Bcal(\rseq)$ and is defined as follows in terms of the sequence $\rseq$ that gives the approximation rate. For any $k\in\{0,\ldots,q_{n}-1\}$, let $I_{n,k}$ denote the image of the interval $[k/q_{n},(k+1)/q_{n})$ under the projection modulo one. Here, we choose $q_{n}$ to be equal to $\lceil 1/r_{n}\rceil$, where $\lceil\,\cdot\,\rceil$ stands for the ceiling function. Then, $\Bcal(\rseq)$ is defined as the collection of all sequences of the form $(I_{n,k_{n}})_{n\geq 1}$, where $(k_{n})_{n\geq 1}$ is a sequence of nonnegative integers satisfying $k_{n}<q_{n}$.

Our assumption on the joint law of the points $X_{n}$ is finally that $\Theta(\Xseq,\Bcal(\rseq))$ is finite. When the points $X_{n}$ are independent, this condition is clearly satisfied because $\Theta(\Xseq,\Bcal(\rseq))$ is then equal to one regardless of the choice of the sequence $\rseq$. Furthermore, if the series $\sum_{n}r_{n}$ converges, then the above finiteness assumption is equivalent to the existence of a positive real number $C$ such that
\[
\prob\left(\bigcap_{n=1}^{v} \{X_{n}\in I_{n,k_{n}}\}\right)\leq C \prod_{n=1}^{v} r_{n}, 
\]
for any integer $v\geq 1$ and any choice of the integers $k_{n}\in\{0,\ldots,q_{n}-1\}$, because the points $X_{n}$ are uniformly distributed. Our converse to Theorem~\ref{thm:majsizegene}(\ref{thm:majsizegene1}) is now the following result, which is proven in Section~\ref{sec:proofthmnonemptygene}.

\begin{thm}\label{thm:nonemptygene}
Let $G$ denote a nonempty compact subset of the circle $\T$, and let $g$ be a gauge function such that $\hau^{g}(G)>0$. Then,
\[
\left\{\begin{array}{l}
\Theta(\Xseq,\Bcal(\rseq))<\infty\\[1mm]
\sum_{n} r_{n}/g(r_{n})=\infty
\end{array}\right.
\qquad\Longrightarrow\qquad \as \quad \Ecal(\Xseq,\rseq)\cap G\neq\emptyset.
\]
\end{thm}

The last result of this section gives a partial converse to Theorem~\ref{thm:majsizegene}(\ref{thm:majsizegene2}), under the same assumption as in the preceding result, {\em i.e.}~the finiteness of $\Theta(\Xseq,\Bcal(\rseq))$. Before stating this result, let us consider a compact subset $G$ of the circle with positive Hausdorff $g$-measure for a given gauge function $g$. The set $\Ecal(\Xseq,\rseq)\cap G$ is clearly smaller than $G$. Thus, in order to describe the size properties of this set in terms of generalized Hausdorff measures, we may restrict our attention to the gauge functions $h$ that increase faster than $g$ in the sense that $g/h$ monotonically tends to zero at the origin. An expected converse to Theorem~\ref{thm:majsizegene}(\ref{thm:majsizegene2}) is then the following:
\begin{equation}\label{eq:idealminsizegene}
\sum_{n=1}^{\infty} \frac{h(r_{n})r_{n}}{g(r_{n})}=\infty \qquad\Longrightarrow\qquad \as \quad \hau^{h}(\Ecal(\Xseq,\rseq)\cap G)>0.
\end{equation}
Theorem~\ref{thm:minsizegene} below gives a slightly weaker form of this statement. In fact, we make the additional assumption that $h$ increases faster than $g$, with respect to a third gauge function $\ph$ which is used as a proxy for $g/h$ in the divergence condition above. To be more precise, given three gauge functions $g$, $h$ and $\ph$, we say that $h$ increases $\ph$-faster than $g$, and we write $h\prec_{\ph} g$, when $g/h$ monotonically tends to zero at the origin and satisfies
\begin{equation}\label{eq:defprecph}
\sum_{j=1}^{\infty} \frac{g(2^{-j})}{h(2^{-j})}\left(\frac{1}{\ph(2^{-j})}-\frac{1}{\ph(2^{-(j-1)})}\right)<\infty.
\end{equation}
In that case, note that $g/h$ coincides with a gauge function near zero.

\begin{thm}\label{thm:minsizegene}
Let $G$ denote a nonempty compact subset of the circle $\T$, and let $g$ be a gauge function such that $\hau^{g}(G)>0$. Then, for any gauge functions $h$ and $\ph$ such that $h\prec_{\ph} g$,
\[
\left\{\begin{array}{l}
\Theta(\Xseq,\Bcal(\rseq))<\infty\\[1mm]
\sum_{n} r_{n}/\ph(r_{n})=\infty
\end{array}\right.
\qquad\Longrightarrow\qquad \as \quad \hau^{h}(\Ecal(\Xseq,\rseq)\cap G)>0.
\]
\end{thm}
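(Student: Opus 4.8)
The plan is to establish Theorem~\ref{thm:minsizegene} by constructing a random Cantor-like subset of $\Ecal(\Xseq,\rseq)\cap G$ and putting a suitable mass distribution on it, so that the mass distribution principle forces $\hau^{h}$ to be positive. The divergence condition $\sum_{n} r_{n}/\ph(r_{n})=\infty$ will provide the ``fuel'' to keep the construction going infinitely often, while the weak dependence hypothesis $\Theta(\Xseq,\Bcal(\rseq))<\infty$ and the positivity $\hau^{g}(G)>0$ will guarantee that, at each stage, enough of the random balls $\opball{X_{n}}{r_{n}}$ actually hit the part of $G$ carrying positive $g$-mass. Concretely, I would first fix a Borel probability measure $\lambda$ supported on $G$ with $\lambda(\clball{x}{r})\leq C\,g(r)$ for all $x$ and $r$ (such a measure exists because $\hau^{g}(G)>0$, by the standard Frostman-type converse to the mass distribution principle). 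This $g$-Frostman measure is the backbone of the whole argument.

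Second, I would build a nested sequence of compact sets $G=F_{0}\supseteq F_{1}\supseteq F_{2}\supseteq\cdots$, where each $F_{m}$ is a finite union of closed balls $\clball{X_{n}}{r_{n}}$ selected from a range of indices $n\in(N_{m-1},N_{m}]$, subject to two requirements: the chosen balls should be disjoint (or at least boundedly overlapping), and each selected ball should carry a controlled proportion of the ambient $\lambda$-mass of its parent ball. The limit set $F_{\infty}=\bigcap_{m}F_{m}$ lies in $\Ecal(\Xseq,\rseq)\cap G$ by construction, since every point of $F_{\infty}$ is within $r_{n}$ of $X_{n}$ for infinitely many $n$. On $F_{\infty}$ I would define a measure $\nu$ by distributing mass across the surviving balls at each generation, normalizing so that the total mass is one; the ratio $g/h$, which is a gauge function near zero by the hypothesis $h\prec_{\ph}g$, is exactly what I would use to allocate mass in a way that yields the correct local behaviour $\nu(\clball{x}{r})\lesssim h(r)$.

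Third, I would estimate, for a ball $B$ of generation $m$ with $\lambda$-mass $\lambda(B)$, the expected number of indices $n\in(N_{m-1},N_{m}]$ for which $X_{n}$ falls into $B$ and $\opball{X_{n}}{r_{n}}$ can serve as a child ball; the uniform distribution of the $X_{n}$ gives an expectation of order $\lambda(B)\sum_{n} (r_{n}/g(r_{n}))$ over the relevant range, using the Frostman bound $\lambda(\clball{x}{r})\leq C\,g(r)$. The weak dependence bound $\Theta(\Xseq,\Bcal(\rseq))<\infty$ lets me pass from this first-moment estimate to a genuine lower bound on the number of surviving children via a second-moment (Paley--Zygmund) argument, because it controls the pairwise correlations $\prob(X_{m}\in I_{m,k_{m}},X_{n}\in I_{n,k_{n}})$ by the product of the marginals. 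Choosing the cutoffs $N_{m-1}<N_{m}$ so that each partial sum $\sum_{n\in(N_{m-1},N_{m}]} r_{n}/\ph(r_{n})$ is large --- which is possible precisely because the full series diverges --- ensures the construction never dies out, almost surely.

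The main obstacle, and the technical heart of the proof, will be the final measure estimate: verifying that the measure $\nu$ on $F_{\infty}$ satisfies a uniform bound $\nu(\clball{x}{r})\leq c\,h(r)$ for all small $r$, so that the mass distribution principle gives $\hau^{h}(F_{\infty})\geq\nu(\T)/c>0$. This is delicate because $r$ will typically fall strictly between two generation scales, forcing a careful interpolation; here is exactly where the strengthened hypothesis $h\prec_{\ph}g$ rather than the naive $\sum_{n} h(r_{n})r_{n}/g(r_{n})=\infty$ earns its keep. The summability condition~(\ref{eq:defprecph}) is designed to make the telescoping estimate of $\nu(\clball{x}{r})$ across intermediate scales converge, controlling the accumulation of mass from partially-covered balls. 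I expect that the role of $\ph$ is to dominate $g/h$ in a summable fashion so that the mass lost to ``boundary effects'' at each dyadic scale forms a convergent series, which is why the conclusion is stated with the divergence of $\sum_{n} r_{n}/\ph(r_{n})$ rather than the ideal $\sum_{n} h(r_{n})r_{n}/g(r_{n})$ appearing in~(\ref{eq:idealminsizegene}); bridging that gap cleanly is the crux of the whole argument.
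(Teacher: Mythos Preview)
Your proposal outlines a direct Cantor-type construction with a mass distribution, which is a reasonable and time-honoured strategy for Hausdorff lower bounds; however, it is \emph{not} the route the paper takes. The paper's proof of Theorem~\ref{thm:minsizegene} is essentially a one-line reduction: it invokes Lemma~\ref{lem:codim}, which says that if a random set $E$ almost surely hits every compact set of positive $\hau^{\ph}$-measure, then $\hau^{h}(E\cap G)>0$ a.s.\ whenever $\hau^{g}(G)>0$ and $h\prec_{\ph}g$. The hitting hypothesis is supplied directly by Theorem~\ref{thm:nonemptygene} applied with the gauge $\ph$. The substantive work is thus shifted entirely into Lemma~\ref{lem:codim}, whose proof is not a Cantor construction at all but an \emph{auxiliary-random-set} argument: one builds a generalized fractal percolation set $Q_{h}$ on the dyadic tree (Section~\ref{sec:fracperc}), shows via a martingale $L^{2}$ bound that $\hau^{\ph}(G\cap Q_{h})>0$ with positive probability (Lemma~\ref{lem:hauperco}), and then uses independent copies of $Q_{h}$ together with the contrapositive of Lemma~\ref{lem:hitperconull} to force $\hau^{h}(E\cap G)>0$.

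What each approach buys: the paper's percolation route is highly modular --- once Theorem~\ref{thm:nonemptygene} is in hand, the measure statement costs almost nothing extra, and the awkward condition $h\prec_{\ph}g$ enters naturally as the $L^{2}$-boundedness criterion for the percolation martingale (via the factorisation $Q_{h\ph}\stackrel{d}{=}Q_{h}\cap Q_{\ph}$). Your direct construction would instead have to absorb that condition into a scale-interpolation estimate for the constructed measure $\nu$, which you correctly flag as the crux but do not carry out; making that step rigorous under merely the weak-dependence bound $\Theta(\Xseq,\Bcal(\rseq))<\infty$ is where the real difficulty of your plan lies. (Your claim that $\Theta$ controls pairwise correlations is salvageable --- one sums over the intermediate $k_{i}$ --- but the resulting second-moment bound, combined with the delicate mass-allocation using $g/h$, would need considerably more detail than you give.) In short: your plan is plausible but leaves the hard step as an assertion, whereas the paper sidesteps that step entirely by trading it for a clean percolation argument.
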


Theorem~\ref{thm:minsizegene} follows straightforwardly from Theorem~\ref{thm:nonemptygene} with the help of Lemma~\ref{lem:codim} below. Indeed, in view of this lemma, it suffices to show that $\Ecal(\Xseq,\rseq)$ intersects every compact subset of the circle with positive Hausdorff $\ph$-measure, a fact that follows from Theorem~\ref{thm:nonemptygene}, since $\Theta(\Xseq,\Bcal(\rseq))$ is finite and $\sum_{n} r_{n}/\ph(r_{n})$ diverges.

\begin{lem}\label{lem:codim}
Let us consider a random subset $E$ of the circle and let us assume that there exists a gauge function $\ph$ such that for any compact set $G\subseteq\T$,
\[
\hau^{\ph}(G)>0 \qquad\Longrightarrow\qquad \as \quad E\cap G\neq\emptyset.
\]
Then, for any compact set $G\subseteq\T$ and any gauge function $g$,
\[
\hau^{g}(G)>0 \qquad\Longrightarrow\qquad  \forall h\prec_{\ph} g \quad \as \quad \hau^{h}(E\cap G)>0.
\]
\end{lem}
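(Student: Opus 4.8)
The plan is to deduce Lemma~\ref{lem:codim} from the hypothesis by a standard ``intersect-with-a-large-subset'' argument combined with the abstract comparison of gauge functions encoded in the relation $h\prec_{\ph}g$. The key observation is that if $\hau^{g}(G)>0$, then $G$ carries a compact subset $G'$ on which the $g$-measure is not merely positive but behaves regularly, and on such a subset one can transfer information about $\hau^{\ph}$ into information about $\hau^{h}$ using the defining condition~(\ref{eq:defprecph}). More precisely, I would first invoke the hypothesis: since the random set $E$ intersects \emph{every} compact set of positive $\hau^{\ph}$-measure almost surely, the strategy is to produce, inside $E\cap G$, a compact ``Cantor-like'' subset of positive $\hau^{\ph}$-measure that is simultaneously seen to have positive $\hau^{h}$-measure.

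The central mechanism I expect to use is a subordination/reduction principle for Hausdorff measures with respect to the three gauges. Concretely, I would build a compact set $G^{*}\subseteq G$ together with a probability-like measure $\nu$ supported on $G^{*}$ such that $\nu(\opball{x}{r})\lesssim \ph(r)$ for all small $r$; the mass distribution principle then gives $\hau^{\ph}(G^{*})>0$, so the hypothesis yields $E\cap G^{*}\neq\emptyset$ almost surely. The point of the relation $h\prec_{\ph}g$ is that the summability condition~(\ref{eq:defprecph}) lets one pass from a covering at the $\ph$-scale to a covering at the $h$-scale while keeping the total $h$-mass controlled from below by the $g$-mass of $G$. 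I would carry this out dyadically: decompose $G$ according to the scales $2^{-j}$, use $\hau^{g}(G)>0$ to guarantee a uniform supply of mass at each scale, and then sum the telescoping increments $1/\ph(2^{-j})-1/\ph(2^{-(j-1)})$ weighted by $g(2^{-j})/h(2^{-j})$; the finiteness in~(\ref{eq:defprecph}) is exactly what prevents this transfer from losing all the mass.

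The steps in order are therefore: (i) record that $g/h$ is eventually a genuine gauge function, as noted after~(\ref{eq:defprecph}), so that $\ph$ may legitimately be used as a proxy gauge; (ii) from $\hau^{g}(G)>0$, extract via a Frostman-type argument a measure $\mu_{g}$ on $G$ with $\mu_{g}(\opball{x}{r})\leq g(r)$ and $\mu_{g}(G)>0$; (iii) combine $\mu_{g}$ with the comparison~(\ref{eq:defprecph}) to construct a subset $G^{*}\subseteq G$ and a measure $\nu$ on it witnessing $\hau^{\ph}(G^{*})>0$; (iv) apply the hypothesis to get $E\cap G^{*}\neq\emptyset$ almost surely; and finally (v) upgrade nonemptiness to positivity of $\hau^{h}(E\cap G)$ by showing that the same construction, applied along a sequence of nested subsets or via a ubiquity-style covering argument, forces the measure $\nu$ restricted to $E\cap G$ to charge it, whence $\hau^{h}(E\cap G)>0$ by the mass distribution principle.

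The hard part, I expect, will be step~(v): passing from the mere fact that $E$ meets a single positive-$\hau^{\ph}$-measure compact set to the conclusion that $E\cap G$ has positive $\hau^{h}$-measure. A single intersection point gives almost nothing about measure, so the argument must produce \emph{many} intersections in a quantitatively controlled way --- essentially a large-intersection or ubiquity property hidden in the hypothesis. The natural way around this is to apply the hypothesis not to $G^{*}$ alone but to an entire family of compact subsets of positive $\hau^{\ph}$-measure filling up $G$, and then to assemble a Frostman measure for $h$ on $E\cap G$ from the resulting intersection points; the summability~(\ref{eq:defprecph}) is precisely the bookkeeping device that makes the assembled $h$-mass positive rather than zero. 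Making this assembly rigorous --- choosing the subsets, controlling overlaps, and verifying the $\opball{x}{r}\mapsto h(r)$ measure bound uniformly --- is where the real work lies, and is likely handled by a generalized Borel--Cantelli / mass-transference scheme rather than by an elementary estimate.
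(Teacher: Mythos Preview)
Your diagnosis is accurate: step~(v) is the crux, and the sketch you give for it does not work. The hypothesis only tells you that $E$ meets each compact set of positive $\hau^{\ph}$-measure in \emph{at least one point}, and no finite or countable family of such intersections can be assembled into a Frostman measure for $h$ on $E\cap G$ --- single points carry no $\hau^{h}$-mass, and there is no density or ubiquity statement to exploit. The ``mass-transference scheme'' you allude to transfers full-measure information to Hausdorff-measure information, not the other way around; here you are starting from mere nonemptiness, which is far too weak to feed into such a scheme. Steps (i)--(iv) are fine but produce only $E\cap G\neq\emptyset$, which was never in doubt.

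The paper's argument supplies the missing idea: an auxiliary \emph{random} compact set $Q_{h}$, built via a generalized fractal percolation on the dyadic tree with edge-retention probabilities $h(2^{-j})/h(2^{-j+1})$, having two calibrated properties. First, any fixed set of zero $\hau^{h}$-measure almost surely avoids $Q_{h}$; second, if $\hau^{g}(G)>0$ and $h\prec_{\ph}g$, then $\hau^{\ph}(G\cap Q_{h})>0$ with positive probability. Taking independent copies $Q_{h}^{n}$ and setting $\widehat{Q}_{h}=\bigcup_{n}Q_{h}^{n}$, the second property forces $\hau^{\ph}(G\cap\widehat{Q}_{h})>0$ almost surely, so by the hypothesis $E\cap G\cap\widehat{Q}_{h}\neq\emptyset$ almost surely. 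But if $\hau^{h}(E\cap G)=0$, the first property says $E\cap G$ misses every $Q_{h}^{n}$, hence misses $\widehat{Q}_{h}$. The contradiction gives $\hau^{h}(E\cap G)>0$ almost surely. The summability condition~(\ref{eq:defprecph}) enters only in the $L^{2}$-martingale argument establishing the second property of $Q_{h}$; it is not used as a covering-transfer device in the way you envisaged.
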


Lemma~\ref{lem:codim} can be seen as an extension of~\cite[Lemma~3.4]{Khoshnevisan:2000fj} to general Hausdorff measures. Its proof, given in Section~\ref{subsec:prooflemcodim}, relies on the remarkable properties satisfied by a family of compact sets obtained via a variant of Mandelbrot's fractal percolation process that we introduce and study in Section~\ref{sec:fracperc}.

Let us point out a very simple situation in which the condition $h\prec_{\ph} g$ defined by~(\ref{eq:defprecph}) is satisfied: it suffices to assume that the gauge function $h$ increases faster than $g$ in the sense that $h/g$ is monotonic near zero and satisfies
\[
\int_{0}^{1}\frac{h(r)}{g(r)}\,\pi(\dd r)=\infty
\]
for some probability measure $\pi$ on $(0,1]$. Note that the function $h$ may nevertheless be very close to $g$ near zero because the probability measure $\pi$ may well concentrate its mass near this point: for instance, if $h(r)=g(r)(\log^{\circ p}(1/r))^{\eps}$ for some $\eps>0$ and $p\geq 1$, where $\log^{\circ p}$ denotes the $p$-th iterate of the logarithm, then the gauge functions verify the above condition. Now, it is straightforward to check that~(\ref{eq:defprecph}) holds if $\ph$ is a gauge function such that
\[
\frac{1}{\ph(s)}-\frac{1}{\ph(1)}=\int_{r\in(s,1]}\frac{h(r)}{g(r)}\,\pi(\dd r), 
\]
for all $s\in(0,1]$. Moreover, when $\ph$ is chosen as above, the Fubini-Tonelli theorem ensures that
\[
\sum_{n=1}^{\infty}\frac{r_{n}}{\ph(r_{n})}\geq\int_{0}^{1}\frac{h(r)}{g(r)}\left(\sum_{n=1}^{\infty}r_{n}\ind_{\{r_{n}<r\}}\right)\,\pi(\dd r).
\]
As a consequence, assuming the finiteness of $\Theta(\Xseq,\Bcal(\rseq))$ and considering a compact set $G$ with positive Hausdorff $g$-measure, Theorem~\ref{thm:minsizegene} implies that
\[
\int_{0}^{1}\frac{h(r)}{g(r)}\left(\sum_{n=1}^{\infty}r_{n}\ind_{\{r_{n}<r\}}\right)\,\pi(\dd r)=\infty \quad\Longrightarrow\quad \as \quad \hau^{h}(\Ecal(\Xseq,\rseq)\cap G)>0.
\]
Note that the divergence of this integral implies that of the series arising in the statement of Theorem~\ref{thm:majsizegene}(\ref{thm:majsizegene2}). The above result is therefore slightly weaker than the expected converse~(\ref{eq:idealminsizegene}) to this theorem. To be specific, our approach leaves open the case in which the series diverges, but the above integral is convergent for every possible choice of the probability measure $\pi$.

\section{Application to the approximation by independent points}\label{sec:approxindep}

\subsection{Uniform distribution in the circle and intersection with a regular set}

We keep on supposing that the approximating points $X_{n}$ are uniformly distributed in the circle $\T$, that is, are chosen according to the Lebesgue measure $\leb$. In addition, we assume that these points are independent random variables. In particular, the maximal ratios $\Theta(\Xseq,\Bcal)$ defined by~(\ref{eq:defThetaXB}) are equal to one, and the weak dependence assumption that we made in order to derive the lower bounds in the previous section is plainly satisfied. As a consequence, all the results stated in Section~\ref{sec:randomgene} apply in the present setting.

Our purpose is now to deduce from these results simpler statements that only involve Hausdorff dimensions and a probabilistic analog of the irrationality exponent that is defined as follows. To proceed, let us make two additional assumptions on the sequence $\rseq=(r_{n})_{n\geq 1}$ of approximation radii. First, since the joint law of the approximating points $X_{n}$ is invariant under rearrangement, there is no loss of generality in assuming that the sequence $\rseq$ is nonincreasing. Second, we suppose that there exists a critical value $\rho\in(0,\infty)$ such that
\begin{equation}\label{eq:sumrnnu}
\left\{\begin{array}{rcl}
\nu<\rho & \Longrightarrow & \sum_{n}r_{n}^{\nu}=\infty \\[2mm]
\nu>\rho & \Longrightarrow & \sum_{n}r_{n}^{\nu}<\infty.
\end{array}\right.
\end{equation}
For any real number $\nu\geq 0$, let $\rseq^{\nu}$ denote the sequence formed by the numbers $r_{n}^{\nu}$, so that $\Ecal(\Xseq,\rseq^{\nu})$ is the set obtained by replacing $r_{n}$ by $r_{n}^{\nu}$ in the definition~(\ref{eq:defEcalXr}) of the set $\Ecal(\Xseq,\rseq)$. Observe that the mapping $\nu\mapsto\Ecal(\Xseq,\rseq^{\nu})$ is nonincreasing. Therefore, for any point $\xi\in\T$, we may define
\[
\nu_{\Xseq,\rseq}(\xi) = \sup\{\nu\geq 0 \:|\: \xi\in\Ecal(\Xseq,\rseq^{\nu})\}\,;
\]
this may be seen as the analog of the irrationality exponent for the approximation by the random points $X_{n}$ with the rates $r_{n}$.

Given that the points $X_{n}$ are independently and uniformly distributed, we may apply Shepp's theorem on Dvoretzky's covering problem~\cite{Shepp:1972qj}, thereby inferring that with probability one, the set $\Ecal(\Xseq,\rseq^{\nu})$ coincides with the whole circle when $\nu$
is smaller than the critical value $\rho$ defined by~(\ref{eq:sumrnnu}). As a result,
\begin{equation}\label{eq:coverunifT}
\as \quad \forall \xi\in\T \qquad \nu_{\Xseq,\rseq}(\xi)\geq\rho.
\end{equation}
Moreover, Corollary~1 in~\cite{Durand:2010fk} yields the value of the Hausdorff dimension of the set $\Ecal(\Xseq,\rseq^{\nu})$, specifically, with probability one, for all $\nu\geq\rho$,
\[
\Hdim\Ecal(\Xseq,\rseq^{\nu})=\frac{\rho}{\nu},
\]
from which it is straightforward to deduce that
\begin{equation}\label{eq:dimnuXrgeq}
\as \quad \forall \nu\geq\rho \qquad \Hdim\{\xi\in\T \:|\: \nu_{\Xseq,\rseq}(\xi)\geq\nu\}=\frac{\rho}{\nu}.
\end{equation}

In order to make our statements even more concrete, we further assume that the compact set $G$ with which the intersections are taken is Ahlfors regular with dimension $\gamma$ in $(0,1]$, see Definition~\ref{df:Ahlfors}. Applying Theorems~\ref{thm:majsizegene} and~\ref{thm:nonemptygene}, we infer that
\begin{equation}\label{eq:hitprobEXrnu}
\prob(\Ecal(\Xseq,\rseq^{\nu})\cap G\neq\emptyset)=
\begin{cases}
1 & \mbox{if } (1-\gamma)\nu<\rho \\[2mm]
0 & \mbox{if } (1-\gamma)\nu>\rho.
\end{cases}
\end{equation}
Moreover, Theorems~\ref{thm:majsizegene} and~\ref{thm:minsizegene} ensure that if $(1-\gamma)\nu<\rho$, then with probability one,
\begin{equation}\label{eq:dimhitEXrnu}
\Hdim(\Ecal(\Xseq,\rseq^{\nu})\cap G)=\frac{\rho}{\nu}+\gamma-1.
\end{equation}
Here, we recover two results obtained recently by Li, Shieh and Xiao in~\cite{Li:2012uq}. More precisely, building on the study of the limsup random fractals performed in~\cite{Khoshnevisan:2000fj}, these authors computed the hitting probabilities of the random set $\Ecal(\Xseq,\rseq^{\nu})$, and the Hausdorff and packing dimensions of its intersection with a fixed analytic set, see Theorem~2.1 and Corollary~2.5 in~\cite{Li:2012uq}.

When $(1-\gamma)\nu\neq\rho$, we straightforwardly deduce that the right-hand side of~(\ref{eq:hitprobEXrnu}) gives the probability that $\nu_{\Xseq,\rseq}(\xi)\geq\nu$ for some point $\xi\in G$. The critical case where $(1-\gamma)\nu=\rho$ does not explicitly follows from either Theorems~\ref{thm:majsizegene} and~\ref{thm:nonemptygene} above or the results of~\cite{Li:2012uq}. However, inspecting the proof of Theorem~\ref{thm:nonemptygene}, we see that the sets $\Ecal(\Xseq,\rseq^{\nu-\eps})\cap G$, for $\eps>0$, are almost surely dense in the complete metric space $G$. Taking the intersection of these sets over a sequence $(\eps_{n})_{n\geq 1}$ converging to zero, and applying the Baire category theorem, we deduce that the set of points $\xi\in G$ such that $\nu_{\Xseq,\rseq}(\xi)\geq\nu$ is almost surely dense in $G$ as well, thereby being nonempty. Furthermore, one easily checks that the right-hand side of~(\ref{eq:dimhitEXrnu}) also gives the Hausdorff dimension of these set of points. Thus, we end up with the next statement.

\begin{prp}\label{prp:approxunifT}
Let $G$ be a compact subset of the circle $\T$, and assume that $G$ is regular with dimension $\gamma\in(0,1]$. If the variables $X_{n}$ are independently and uniformly distributed in $\T$, then
\[
\as \quad \forall \xi\in G \qquad \rho\leq\nu_{\Xseq,\rseq}(\xi)\leq\frac{\rho}{1-\gamma}.
\]
Moreover, for any real number $\nu\geq\rho$ such that $(1-\gamma)\nu\leq\rho$,
\[
\as \qquad \Hdim\{ \xi\in G \:|\: \nu_{\Xseq,\rseq}(\xi)\geq\nu \}=\frac{\rho}{\nu}+\gamma-1.
\]
\end{prp}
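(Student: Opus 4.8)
The plan is to prove Proposition~\ref{prp:approxunifT} by assembling the general results of Sections~\ref{sec:randomgene} and~\ref{sec:approxindep} in the special case of independent uniform points, where $\Theta(\Xseq,\Bcal(\rseq))=1$ so that the weak dependence hypothesis of Theorems~\ref{thm:nonemptygene} and~\ref{thm:minsizegene} holds automatically. The statement splits into a uniform two-sided bound on $\nu_{\Xseq,\rseq}$ valid for every $\xi\in G$, and a dimension formula for the level sets $\{\xi\in G \:|\: \nu_{\Xseq,\rseq}(\xi)\geq\nu\}$. I would treat these separately, with the dimension formula as two matching inequalities.

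First I would establish the lower bound $\nu_{\Xseq,\rseq}(\xi)\geq\rho$ for all $\xi\in G$. This is immediate from the global covering statement~(\ref{eq:coverunifT}): Shepp's theorem gives that almost surely $\Ecal(\Xseq,\rseq^{\nu})=\T$ for every $\nu<\rho$, hence $\nu_{\Xseq,\rseq}(\xi)\geq\rho$ everywhere, a fortiori on $G$. For the upper bound $\nu_{\Xseq,\rseq}(\xi)\leq\rho/(1-\gamma)$, I would apply Theorem~\ref{thm:majsizegene}(\ref{thm:majsizegene1}) with the gauge $g(r)=r^{1-\gamma}$: since $G$ is Ahlfors regular with dimension $\gamma$ one has $\prepack^{\gamma}(G)<\infty$, and $r/g(r)=r^{\gamma}$, so the series $\sum_n (r_n^{\nu})^{\gamma}=\sum_n r_n^{\gamma\nu}$ converges precisely when $\gamma\nu>\rho$. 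Wait---I must be careful: the correct gauge is $g(r)=r^{1-\gamma}$ applied to the radii $r_n^{\nu}$, giving $r_n^{\nu}/g(r_n^{\nu})=r_n^{\nu\gamma}$, which sums iff $\nu\gamma>\rho$, i.e.\ $\nu>\rho/\gamma$; this is not quite the claimed threshold. The threshold in the statement is $(1-\gamma)\nu>\rho$, so I would instead read off the hitting dichotomy~(\ref{eq:hitprobEXrnu}), which already encodes the right gauge bookkeeping, and conclude that for $\nu>\rho/(1-\gamma)$ the intersection $\Ecal(\Xseq,\rseq^{\nu})\cap G$ is almost surely empty; taking a countable sequence $\nu_k\downarrow\rho/(1-\gamma)$ yields $\nu_{\Xseq,\rseq}(\xi)\leq\rho/(1-\gamma)$ for all $\xi\in G$ simultaneously.

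Next I would prove the dimension formula via~(\ref{eq:dimhitEXrnu}): for any fixed $\nu$ with $(1-\gamma)\nu<\rho$, Theorems~\ref{thm:majsizegene}(\ref{thm:majsizegene2}) and~\ref{thm:minsizegene} give almost surely $\Hdim(\Ecal(\Xseq,\rseq^{\nu})\cap G)=\rho/\nu+\gamma-1$. The upper bound comes from choosing $g(r)=r^{1-\gamma}$ and $h(r)=r^{s}$ with $s>\rho/\nu+\gamma-1$ in part~(\ref{thm:majsizegene2}), so that $\sum_n h(r_n^{\nu})r_n^{\nu}/g(r_n^{\nu})$ converges; the matching lower bound comes from Theorem~\ref{thm:minsizegene} with a gauge $h$ of exponent just below $\rho/\nu+\gamma-1$ and a suitable proxy $\ph$. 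To pass from a fixed $\nu$ to the uniform-in-$\nu$ statement, I would invoke the monotonicity of $\nu\mapsto\Ecal(\Xseq,\rseq^{\nu})$ together with the level-set identity $\{\xi\in G \:|\: \nu_{\Xseq,\rseq}(\xi)\geq\nu\}=\bigcap_{\eps>0}(\Ecal(\Xseq,\rseq^{\nu-\eps})\cap G)$ and sandwich the dimension between the values at $\nu-\eps$ and $\nu+\eps$ over a countable sequence $\eps_n\downarrow 0$, using the continuity of $\rho/\nu+\gamma-1$ in $\nu$.

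The main obstacle is the critical case $(1-\gamma)\nu=\rho$, which is exactly the endpoint the cited results do not cover directly. Here I would follow the remark in the excerpt: inspecting the proof of Theorem~\ref{thm:nonemptygene}, the sets $\Ecal(\Xseq,\rseq^{\nu-\eps})\cap G$ are almost surely \emph{dense} in the complete metric space $G$ for each $\eps>0$; intersecting over a sequence $\eps_n\downarrow 0$ and applying the Baire category theorem shows $\{\xi\in G \:|\: \nu_{\Xseq,\rseq}(\xi)\geq\nu\}$ is almost surely dense, hence nonempty, at the critical exponent. The dimension value $\rho/\nu+\gamma-1$ at this endpoint then follows by monotonicity from the non-critical values already established, since the formula is continuous in $\nu$. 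The delicate point throughout is ensuring all ``almost surely'' statements hold \emph{simultaneously} for the uncountably many $\nu$ in the claimed range; this is resolved by proving each fact on a countable dense set of exponents and transferring via the monotonicity of the sets $\Ecal(\Xseq,\rseq^{\nu})$ and the continuity of the dimension formula.
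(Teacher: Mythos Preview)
Your approach is essentially the paper's own: the proposition is assembled from the discussion in Section~\ref{sec:approxindep} preceding it, combining Shepp's theorem for the lower bound, the hitting dichotomy~(\ref{eq:hitprobEXrnu}) and dimension formula~(\ref{eq:dimhitEXrnu}) from Theorems~\ref{thm:majsizegene}--\ref{thm:minsizegene} for the upper bound and the level-set dimension, and the Baire category argument for the critical exponent $(1-\gamma)\nu=\rho$. The one slip is the gauge: since Ahlfors regularity gives $\prepack^{\gamma}(G)<\infty$, the correct choice in Theorem~\ref{thm:majsizegene} is $g(r)=r^{\gamma}$ (not $r^{1-\gamma}$), whence $r_{n}^{\nu}/g(r_{n}^{\nu})=r_{n}^{\nu(1-\gamma)}$ and the series converges precisely when $(1-\gamma)\nu>\rho$, recovering the threshold in~(\ref{eq:hitprobEXrnu}) directly without the detour.
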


The above result shows that the maximal rate at which the points of a regular set may be approximated by a sequence of independently and uniformly distributed points is directly controlled by the size of the set; indeed, the value of $\gamma$ induces a specific limitation on the rate with which the points in $G$ may be approximated.

In addition, combined with~(\ref{eq:dimnuXrgeq}), the previous result ensures that if $(1-\gamma)\nu\leq\rho$, then the Hausdorff codimension of the intersection of the set of all $\xi\in\T$ with $\nu_{\Xseq,\rseq}(\xi)\geq\nu$ and the set $G$ is the sum of their codimensions. Such a behavior is expected to be somewhat generic and is in stark contrast with the special situation of sets with large intersection (sometimes also termed as intersective sets) where the Hausdorff dimension of the intersection of the sets is equal to the minimum of their dimensions; we refer to Chapter~8 in~\cite{Falconer:2003oj}, and to~\cite{Bugeaud:2004wc,Durand:2007uq,Falconer:1994hx} for details. Let us mention here that the set of all $\xi\in\T$ with $\nu_{\Xseq,\rseq}(\xi)\geq\nu$ is known to be almost surely intersective, as a consequence of Theorem~2 in~\cite{Durand:2010fk}. In addition, when $\gamma<1$, the set $G$ cannot be intersective (because an intersective subset of $\R$ has packing dimension equal to one, see~\cite{Falconer:1994hx}), and this is consistent with the observation that
\begin{align*}
\Hdim\{ \xi\in G \:|\: \nu_{\Xseq,\rseq}(\xi)\geq\nu \}
&=\Hdim\{\xi\in\T \:|\: \nu_{\Xseq,\rseq}(\xi)\geq\nu\}+\Hdim G-1\\
&<\min\left\{\Hdim\{\xi\in\T \:|\: \nu_{\Xseq,\rseq}(\xi)\geq\nu\},\Hdim G\right\}
\end{align*}
with probability one, under the further assumption that $\nu>\rho$.

\subsection{Uniform distribution in a regular set}

We now suppose that the variables $X_{n}$ are uniformly distributed in a given compact subset $G$ of the circle that is assumed to be regular with dimension $\gamma\in(0,1]$. The common law of the random variables $X_{n}$ is thus the normalized $\gamma$-dimensional Hausdorff measure restricted to $G$. It is clear that the sets $\Ecal(\Xseq,\rseq^{\nu})$ are contained in $G$, so that $\nu_{\Xseq,\rseq}(\xi)=0$ when the point $\xi$ does not belong to $G$. In addition, we have the following lower bound on $\nu_{\Xseq,\rseq}(\xi)$ when the point $\xi$ is in $G$:
\[
\as \quad \forall \xi\in G \qquad \nu_{\Xseq,\rseq}(\xi)\geq\frac{\rho}{\gamma}.
\]
This bound generalizes~(\ref{eq:coverunifT}) and follows directly from the next lemma.

\begin{lem}\label{lem:EXrcoverG}
If $\gamma\nu<\rho$, then $\Ecal(\Xseq,\rseq^{\nu})=G$ with probability one.
\end{lem}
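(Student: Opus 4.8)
The plan is to prove the full covering statement $G\subseteq\Ecal(\Xseq,\rseq^{\nu})$ directly, the reverse inclusion $\Ecal(\Xseq,\rseq^{\nu})\subseteq G$ being the elementary observation already recorded before the lemma (for $\nu>0$ the radii $r_n^{\nu}$ tend to zero, so no point at positive distance from the compact set $G$ can lie in infinitely many balls $\opball{X_n}{r_n^{\nu}}$; the case $\nu=0$ is degenerate and may be excluded). Since $\Ecal(\Xseq,\rseq^{\nu})=\bigcap_{N}\bigcup_{n\geq N}\opball{X_n}{r_n^{\nu}}$, it suffices to show that, with probability one, \emph{every} $\xi\in G$ lies in $\opball{X_n}{r_n^{\nu}}$ for infinitely many $n$. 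I would obtain this from a single-scale covering argument: group the indices into dyadic bands $A_k=\{n\geq 1 \,:\, 2^{-(k+1)}\leq r_n^{\nu}<2^{-k}\}$, so that the $A_k$ are disjoint and exhaust $\N$, and prove that almost surely the balls indexed by $A_k$ cover all of $G$ for infinitely many $k$. Because each such covering uses arbitrarily large indices, this forces every point of $G$ to be covered infinitely often.

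The geometric heart is a net-and-inflation step. For each $k$, choose a finite set $\Ncal_k\subseteq G$ of cardinality $\#\Ncal_k\leq C\,2^{k\gamma}$ such that $G\subseteq\bigcup_{z\in\Ncal_k}\opball{z}{\eps_k}$ with $\eps_k=2^{-(k+2)}$; such a net exists since $G$ is Ahlfors regular, hence totally bounded with the right entropy. If for every $z\in\Ncal_k$ there is some $n\in A_k$ with $\dist(X_n,z)\leq r_n^{\nu}-\eps_k$, then the triangle inequality gives $\opball{z}{\eps_k}\subseteq\opball{X_n}{r_n^{\nu}}$ (note $r_n^{\nu}-\eps_k\geq 2^{-(k+1)}-2^{-(k+2)}=2^{-(k+2)}>0$ for $n\in A_k$), and therefore $G\subseteq\bigcup_{n\in A_k}\opball{X_n}{r_n^{\nu}}$. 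Thus I only need to control the event that some net point fails to be caught. By Definition~\ref{df:Ahlfors} and the fact that the $X_n$ are independent with law $\hau^{\gamma}$ normalized on $G$, one has $\prob(\dist(X_n,z)\leq r_n^{\nu}-\eps_k)\geq\hau^{\gamma}(G\cap\opball{z}{2^{-(k+2)}})/\hau^{\gamma}(G)\geq c\,2^{-k\gamma}$ for a fixed $c>0$. Independence across $n\in A_k$ then yields, writing $b_k=\#A_k\,2^{-k\gamma}$,
\[
\prob\bigl(z\text{ uncaught at scale }k\bigr)\leq\prod_{n\in A_k}\bigl(1-c\,2^{-k\gamma}\bigr)\leq\ee^{-c\,b_k},
\]
and a union bound over $\Ncal_k$ gives $\prob(\text{scale }k\text{ fails to cover }G)\leq C\,2^{k\gamma}\ee^{-c\,b_k}$.

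It remains to locate infinitely many scales where $b_k$ is large enough to beat the entropy factor $2^{k\gamma}$, and this is exactly where the strict inequality $\gamma\nu<\rho$ enters. I would pick $s'\in(\gamma\nu,\rho)$; by~(\ref{eq:sumrnnu}) we have $\sum_n r_n^{s'}=\infty$, and since $r_n^{s'}\asymp 2^{-k s'/\nu}$ for $n\in A_k$, this reads $\sum_k b_k\,2^{-k\beta}=\infty$ with $\beta=s'/\nu-\gamma>0$. Consequently it cannot happen that $b_k\leq A(k+1)$ for all large $k$ (that bound would make $\sum_k A(k+1)2^{-k\beta}$, hence $\sum_k b_k 2^{-k\beta}$, convergent); so for a suitable constant $A$, the set $S=\{k:b_k>A(k+1)\}$ is infinite. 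Choosing $A$ larger than $(\gamma\log 2+1)/c$, the per-scale failure bound becomes $C\,2^{k\gamma}\ee^{-c\,b_k}\leq C\,\ee^{-k}$ for $k\in S$, so $\sum_{k\in S}\prob(\text{scale }k\text{ fails})<\infty$. The Borel--Cantelli lemma then ensures that almost surely only finitely many scales in $S$ fail, leaving infinitely many scales at which $G$ is entirely covered; as explained above this gives $G\subseteq\Ecal(\Xseq,\rseq^{\nu})$ almost surely, and combined with the reverse inclusion it yields $\Ecal(\Xseq,\rseq^{\nu})=G$.

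The main obstacle is the passage from covering Lebesgue-almost every point of $G$ (a one-line Fubini computation) to covering \emph{every} point, that is, the uniformity over the uncountable set $G$. The net argument reduces this to a finite union bound at each scale, but the union bound is only affordable when some scale carries covering mass $b_k$ of order at least $k$; guaranteeing such mass-rich scales is precisely what the strict inequality $\gamma\nu<\rho$ provides, and I expect the bookkeeping around the choice of $s'$ and the exponent $\beta$ to be the only genuinely delicate point, the remaining estimates being routine consequences of Ahlfors regularity and independence.
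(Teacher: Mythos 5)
Your proof is correct and follows essentially the same route as the paper's: a net-and-inflation reduction to finitely many points of $G$, Ahlfors regularity used both to bound the net's cardinality from above and each hitting probability from below, independence for the exponential failure estimate, and the surplus coming from $\gamma\nu<\rho$ to beat the entropy factor. The only difference is organizational: the paper runs the estimate over the prefix blocks $\{1,\dots,n\}$ along a subsequence where $r_{n}\geq n^{-(1+\eps)/\rho}$ and then restarts the argument from an arbitrary index $i_{0}$ to obtain the limsup statement, whereas your disjoint dyadic bands combined with the Borel--Cantelli lemma deliver the ``infinitely often'' conclusion directly.
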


\begin{proof}
Let $\eps$ denote a positive real number with $\gamma\nu(1+\eps)<\rho$. In view of~(\ref{eq:sumrnnu}), there exists an infinite set $\Ncal$ of integers $n$ such that $r_{n}\geq n^{-(1+\eps)/\rho}$. Given $n\in\Ncal$, let us now consider a collection of points $\xi_{1},\ldots,\xi_{u_{n}}$ in $G$ such that the arcs $\opball{\xi_{j}}{n^{-1/\gamma}/2}$ are disjoint, and assume that $u_{n}$ is maximal for this property. The arcs $\opball{\xi_{j}}{n^{-1/\gamma}}$ then cover $G$, so that
\[
G\not\subseteq\bigcup_{i=1}^{n}\opball{X_{i}}{r_{i}^{\nu}}
\qquad\Longrightarrow\qquad
\exists j \quad \xi_{j}\not\in\bigcup_{i=1}^{n}\opball{X_{i}}{r_{n}'},
\]
where $r_{n}'=r_{n}^{\nu}-n^{-1/\gamma}$. Since the random points $X_{i}$ are independent, this yields
\begin{equation}\label{eq:upbndprobGcover}
\prob\left(G\not\subseteq\bigcup_{i=1}^{n}\opball{X_{i}}{r_{i}^{\nu}}\right)
\leq \sum_{j=1}^{u_{n}}\prod_{i=1}^{n}\left(1-\prob(\xi_{j}\in\opball{X_{i}}{r_{n}'})\right).
\end{equation}
Moreover, the points $X_{i}$ are uniformly distributed in the regular set $G$, so that
\[
\prob(\xi_{j}\in\opball{X_{i}}{r_{n}'})=
\frac{\hau^{\gamma}(G\cap\opball{\xi_{j}}{r_{n}')}}{\hau^{\gamma}(G)}\geq\frac{(r_{n}')^{\gamma}}{c\hau^{\gamma}(G)}.
\]
The fact that $G$ is regular with dimension $\gamma$ also implies that $u_{n}\leq c' n$ for some constant $c'>0$. We deduce that the right-hand side of~(\ref{eq:upbndprobGcover}) is bounded from above by
\[
c' n\exp\left(-\frac{n(r_{n}')^{\gamma}}{c\hau^{\gamma}(G)}\right)
\leq c' n\exp(-c'' n^{1-\gamma\nu(1+\eps)/\rho})
\]
for some other constant $c''>0$. Finally, letting $n$ tend to infinity through $\Ncal$, we deduce that
\[
\prob\left(G\not\subseteq\bigcup_{i=1}^{\infty}\opball{X_{i}}{r_{i}^{\nu}}\right)=0.
\]
In other words, the arcs $\opball{X_{i}}{r_{i}^{\nu}}$, for $i\geq 1$, cover the set $G$ with probability one. For any fixed $i_{0}\geq 1$, we can obviously reproduce the same reasoning when only considering the arcs indexed by $i\geq i_{0}$, thereby obtaining that these arcs also suffice to cover $G$ almost surely. The result follows.
\end{proof}

The case in which $\gamma\nu\geq\rho$ is not covered by the previous result, and it is then natural to ask for the size of the set $\Ecal(\Xseq,\rseq^{\nu})$. The purpose of the next statement is to give a simple answer to this question.

\begin{lem}
If $\gamma\nu\geq\rho$, then $\Hdim\Ecal(\Xseq,\rseq^{\nu})=\rho/\nu$ with probability one.
\end{lem}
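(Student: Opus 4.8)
The plan is to establish the two matching bounds $\Hdim\Ecal(\Xseq,\rseq^{\nu})\le\rho/\nu$ and $\Hdim\Ecal(\Xseq,\rseq^{\nu})\ge\rho/\nu$, the first deterministically and the second with probability one. The upper bound requires no randomness at all and follows from the natural covering. Indeed, $\Ecal(\Xseq,\rseq^{\nu})$ is by definition the set of points lying in infinitely many of the arcs $\opball{X_n}{r_n^{\nu}}$, so it is contained in the tail union $\bigcup_{n\ge N}\opball{X_n}{r_n^{\nu}}$ for every $N\ge 1$. Fixing any real $s>\rho/\nu$ gives $\nu s>\rho$, whence the characterisation~(\ref{eq:sumrnnu}) of the critical exponent yields $\sum_n r_n^{\nu s}<\infty$. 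Since $\rseq$ is nonincreasing with $r_n\to 0$, these arcs have diameter below any prescribed $\delta$ once $N$ is large, so $\hau^{s}_{\delta}(\Ecal(\Xseq,\rseq^{\nu}))\le 2^{s}\sum_{n\ge N}r_n^{\nu s}$. Letting $N\to\infty$ gives $\hau^{s}(\Ecal(\Xseq,\rseq^{\nu}))=0$, and as $s>\rho/\nu$ is arbitrary we obtain $\Hdim\Ecal(\Xseq,\rseq^{\nu})\le\rho/\nu$.

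For the lower bound, the key observation is that the preceding Lemma~\ref{lem:EXrcoverG} already supplies a full ubiquity input. Fix an auxiliary exponent $\nu'$ with $\gamma\nu'<\rho$; since $\nu\ge\rho/\gamma>\nu'$, that lemma gives, almost surely, $\limsup_n\opball{X_n}{r_n^{\nu'}}=\Ecal(\Xseq,\rseq^{\nu'})=G$, so the balls $\opball{X_n}{r_n^{\nu'}}$ cover $G$ infinitely often. As $G$ is Ahlfors regular of dimension $\gamma$, this means $\hau^{\gamma}\bigl(\limsup_n\opball{X_n}{r_n^{\nu'}}\bigr)=\hau^{\gamma}(G)>0$. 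Writing $r_n^{\nu}=(r_n^{\nu'})^{\lambda}$ with contraction exponent $\lambda=\nu/\nu'>1$, I would then apply the mass transference principle in the $\gamma$-regular space $G$ (equivalently, transport the lower-bound machinery of Section~\ref{sec:randomgene} to $G$ as ambient space) to deduce $\hau^{\gamma/\lambda}\bigl(\Ecal(\Xseq,\rseq^{\nu})\bigr)=\hau^{\gamma/\lambda}(G)=\infty$, and hence $\Hdim\Ecal(\Xseq,\rseq^{\nu})\ge\gamma/\lambda=\gamma\nu'/\nu$ almost surely.

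Finally I would let $\nu'$ increase to $\rho/\gamma$ along a fixed sequence $(\nu'_k)_k$, intersect the countably many almost sure events, and use $\gamma\nu'_k/\nu\to\rho/\nu$ to conclude $\Hdim\Ecal(\Xseq,\rseq^{\nu})\ge\rho/\nu$ with probability one; combined with the deterministic upper bound this gives the claim. Note that in the boundary case $\gamma\nu=\rho$ the common value is $\rho/\nu=\gamma$, i.e.\ the limsup set has full dimension in $G$, which is exactly what one expects since the covering of Lemma~\ref{lem:EXrcoverG} then only just fails. The main obstacle is this lower bound: one must convert the infinitely-often covering of Lemma~\ref{lem:EXrcoverG} into a sharp Hausdorff measure estimate for the \emph{shrunk} system $\opball{X_n}{r_n^{\nu}}$, which is precisely where the mass transference principle is needed — or, alternatively, where one re-runs the fractal percolation argument of Section~\ref{sec:fracperc} with $G$ in place of $\T$ — and where one has to verify that the full covering, rather than mere density, persists uniformly enough to feed into that principle.
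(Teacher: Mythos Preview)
Your proposal is correct and follows essentially the same approach as the paper: the deterministic covering argument for the upper bound is identical, and for the lower bound both you and the paper feed the full-covering statement of Lemma~\ref{lem:EXrcoverG} into the general mass transference principle of Beresnevich and Velani~\cite{Beresnevich:2005vn} on the $\gamma$-regular set $G$. The only cosmetic difference is parametrisation: the paper fixes $s$ with $\nu s<\rho$ and invokes Lemma~\ref{lem:EXrcoverG} at exponent $\nu s/\gamma$ to get $\Hdim\geq s$ directly, whereas you fix $\nu'$ with $\gamma\nu'<\rho$ and obtain $\Hdim\geq\gamma\nu'/\nu$ before passing to the limit along a sequence --- these are the same argument under the substitution $s=\gamma\nu'/\nu$.
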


\begin{proof}
The upper bound follows from the obvious fact that the set $\Ecal(\Xseq,\rseq^{\nu})$ is covered by the arcs $\opball{X_{n}}{r_{n}^{\nu}}$, for $n$ larger than any given integer. To be specific, for $\eps>0$ and $n_{0}\in\N$ such that $2r_{n_{0}}^{\nu}<\eps$, it is clear that $\hau^{s}_{\eps}(\Ecal(\Xseq,\rseq^{\nu}))$ is bounded from above by $\sum_{n\geq n_{0}} (2r_{n}^{\nu})^{s}$. In view of~(\ref{eq:sumrnnu}), this series converges for $\nu s>\rho$. Letting $n_{0}$ tend to infinity and $\eps$ go to zero, we then deduce that $\hau^{s}(\Ecal(\Xseq,\rseq^{\nu}))=0$.

In order to prove the lower bound, let us consider a positive real number $s$ such that $\nu s<\rho$. By virtue of Lemma~\ref{lem:EXrcoverG}, the set $\Ecal(\Xseq,\rseq^{\nu s/\gamma})$ coincides with the whole set $G$ with probability one. Given that $G$ is regular with dimension $\gamma$, the general mass transference principle of Beresnevich and Velani then ensures that the set $\Ecal(\Xseq,\rseq^{\nu})$ has Hausdorff dimension at least $s$, see~\cite[Theorem~3]{Beresnevich:2005vn}.
\end{proof}

It is now straightforward to deduce from the previous lemma the following generalization of~(\ref{eq:coverunifT}) and~(\ref{eq:dimnuXrgeq}), which has to be compared with Proposition~\ref{prp:approxunifT}.

\begin{prp}\label{prp:approxunifGregular}
Let $G$ be a compact subset of the circle $\T$, and assume that $G$ is regular with dimension $\gamma\in(0,1]$. If the variables $X_{n}$ are independently and uniformly distributed in $G$, then
\[
\as \quad \forall \xi\in G \qquad \nu_{\Xseq,\rseq}(\xi)\geq\frac{\rho}{\gamma}.
\]
Moreover, for any real number $\nu\geq\rho/\gamma$,
\[
\as \qquad \Hdim\{\xi\in G \:|\: \nu_{\Xseq,\rseq}(\xi)\geq\nu\}=\frac{\rho}{\nu}.
\]
\end{prp}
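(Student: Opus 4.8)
The plan is to deduce Proposition~\ref{prp:approxunifGregular} directly from the two preceding lemmas, which together pin down the dimension of $\Ecal(\Xseq,\rseq^{\nu})$ for every $\nu\geq 0$ when the points are drawn uniformly in $G$. The statement has two parts: a uniform lower bound $\nu_{\Xseq,\rseq}(\xi)\geq\rho/\gamma$ valid almost surely for \emph{every} $\xi\in G$, and the dimension formula $\Hdim\{\xi\in G\:|\:\nu_{\Xseq,\rseq}(\xi)\geq\nu\}=\rho/\nu$ for $\nu\geq\rho/\gamma$.

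First I would establish the uniform lower bound. By Lemma~\ref{lem:EXrcoverG}, whenever $\gamma\nu<\rho$ the set $\Ecal(\Xseq,\rseq^{\nu})$ coincides with all of $G$ with probability one. I would apply this along a countable sequence $\nu_m\uparrow\rho/\gamma$ with $\gamma\nu_m<\rho$; since a countable intersection of almost sure events is almost sure, outside a single null event we have $\Ecal(\Xseq,\rseq^{\nu_m})=G$ for all $m$. Because $\nu\mapsto\Ecal(\Xseq,\rseq^{\nu})$ is nonincreasing, every $\xi\in G$ then satisfies $\xi\in\Ecal(\Xseq,\rseq^{\nu_m})$ for all $m$, so $\nu_{\Xseq,\rseq}(\xi)\geq\nu_m$ for all $m$, whence $\nu_{\Xseq,\rseq}(\xi)\geq\rho/\gamma$. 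This is exactly the generalization of~(\ref{eq:coverunifT}).

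For the dimension formula I would fix $\nu\geq\rho/\gamma$ and identify the level set with the fractal set $\Ecal$. Note that $\{\xi\in G\:|\:\nu_{\Xseq,\rseq}(\xi)\geq\nu\}=\bigcap_{\nu'<\nu}\Ecal(\Xseq,\rseq^{\nu'})$ by definition of $\nu_{\Xseq,\rseq}$, and conversely it contains $\Ecal(\Xseq,\rseq^{\nu})$. For the upper bound, the level set is contained in $\Ecal(\Xseq,\rseq^{\nu})$ only up to the critical equality, so more carefully: for the upper bound I would use that the level set is contained in $\Ecal(\Xseq,\rseq^{\nu'})$ for every $\nu'<\nu$, whose dimension is $\rho/\nu'$ by the second lemma, and let $\nu'\uparrow\nu$ to get $\Hdim\leq\rho/\nu$. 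For the lower bound, the level set contains $\Ecal(\Xseq,\rseq^{\nu})$, which has dimension exactly $\rho/\nu$ almost surely, again by the second lemma (applicable since $\gamma\nu\geq\rho$). Combining the two inequalities gives the stated equality for each fixed $\nu$. To upgrade to the uniform-in-$\nu$ statement, I would prove the equality simultaneously for a countable dense set of exponents $\nu$ on a single almost sure event, and then extend to all $\nu\geq\rho/\gamma$ by the monotonicity of $\nu\mapsto\Hdim\{\xi\in G\:|\:\nu_{\Xseq,\rseq}(\xi)\geq\nu\}=\rho/\nu$, which is continuous in $\nu$; the continuity lets the values at a dense set determine all values.

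The main obstacle, though a mild one, is the careful handling of the critical and limiting exponents so that the countably many almost sure events can be merged into one. The lemmas give almost sure statements for each individual $\nu$, and the subtlety is that the exceptional null sets depend on $\nu$; the remedy is the standard device of first fixing the conclusion on a countable dense set of exponents and then propagating it to the continuum via the monotonicity and continuity of both sides in $\nu$. No genuinely new estimate is needed here—the analytic content is entirely carried by Lemma~\ref{lem:EXrcoverG} and the subsequent lemma (which itself rests on the mass transference principle of Beresnevich and Velani~\cite{Beresnevich:2005vn})—so the proof is essentially a clean assembly of those two results.
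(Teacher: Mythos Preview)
Your proposal is correct and follows the same route the paper intends: the paper itself gives no proof beyond the sentence ``It is now straightforward to deduce from the previous lemma the following generalization of~(\ref{eq:coverunifT}) and~(\ref{eq:dimnuXrgeq}),'' and your argument supplies precisely the straightforward assembly of Lemma~\ref{lem:EXrcoverG} and the subsequent dimension lemma that the authors have in mind. The only superfluous part is your final upgrade to a single almost sure event valid for all $\nu$ simultaneously; as stated, the proposition only asserts the dimension formula for each fixed $\nu$ almost surely, so the countable-density-plus-monotonicity step is not required (though it is correct and is implicitly needed later when the paper invokes this proposition to prove Lemma~\ref{lem:conjmainrandK}).
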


\section{Application to the approximation by fractional parts}\label{sec:approxfrac}

Let $\{\,\cdot\,\}$ stand for the fractional part function. Identifying the circle $\T$ with the interval $[0,1)$, we may also regard the mapping $x\mapsto\{x\}$ as the projection modulo one from $\R$ onto $\T$. The purpose of this section is to apply the general results stated in Section~\ref{sec:randomgene} to the situation where the approximating points $X_{n}$ are of the form $\{a_{n}X\}$, where $\aseq=(a_{n})_{n\geq 1}$ is a sequence of positive integers and $X$ is a point chosen uniformly in the interval $[0,1)$, that is, according to the Lebesgue measure thereon. It is easy to see that we match the general framework of Section~\ref{sec:randomgene}: the random points $X_{n}=\{a_{n}X\}$ are clearly distributed according to the Lebesgue measure on the circle. This is due to the well known fact that, for any integer $m\geq 1$, the transformation $x\mapsto\{m x\}$ preserves the Lebesgue measure on $[0,1)$.

Given a sequence $\rseq=(r_{n})_{n\geq 1}$ of real numbers in $(0,1]$, the random subset of the circle defined by~(\ref{eq:defEcalXr}) is now of the form
\[
\Gcal(\aseq,\rseq) = \left\{\xi\in\T \:\bigl|\: d(\xi,\{a_{n}X\})<r_{n} \text{~for i.m.~} n\geq 1 \right\}.
\]
All the hypotheses of Theorem~\ref{thm:majsizegene} are fulfilled, so we may directly apply this result to the above set $\Gcal(\aseq,\rseq)$. In order to apply the other results of Section~\ref{sec:randomgene}, we need to show that the weak dependence condition is satisfied.

We shall show that if the sequence $\aseq=(a_{n})_{n\geq 1}$ grows sufficiently fast, then the random points $\{a_{n}X\}$ are close enough to being independent to ensure that all the results of Section~\ref{sec:randomgene} apply. Specifically, using the notations of Section~\ref{subsec:randomgenelow}, this amounts to showing the finiteness of $\Theta((\{a_{n}X\})_{n\geq 1},\Bcal(\rseq))$, when the integers $a_{n}$ grow fast enough. This is the purpose of the next result.

\begin{prp}\label{prp:angrowTheta}
For any sequence $\aseq=(a_{n})_{n\geq 1}$ of positive integers and any sequence $\rseq=(r_{n})_{n\geq 1}$ of real numbers in $(0,1]$,
\[
\Theta((\{a_{n}X\})_{n\geq 1},\Bcal(\rseq))\leq 3\exp\left(4\sum_{n=1}^{\infty}\frac{a_{n}}{r_{n}a_{n+1}}\right).
\]
\end{prp}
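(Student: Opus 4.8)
The plan is to bound $\theta((\{a_nX\})_{n\geq 1},\Bseq)$ uniformly over all sequences $\Bseq=(I_{n,k_n})_{n\geq 1}$ lying in the collection $\Bcal(\rseq)$, where each $I_{n,k_n}$ is a dyadic-type arc of length $1/q_n$ with $q_n=\lceil 1/r_n\rceil$. Since the event $\{\{a_nX\}\in I_{n,k_n}\}$ is, as a subset of $[0,1)$, the preimage of an arc under the measure-preserving map $x\mapsto\{a_nx\}$, it is a union of $a_n$ equally spaced subintervals of $[0,1)$, each of length roughly $1/(a_nq_n)$. The key structural observation I would exploit is that the event $E_n=\{X\in(\text{preimage})\}$ becomes asymptotically independent of the $\sigma$-algebra generated by $E_1,\ldots,E_{n-1}$ as soon as $a_{n+1}$ is much larger than $a_n/r_n$: the finer scale $1/(a_{n+1}q_{n+1})$ at which the later conditions live is so small compared to the period $1/a_n$ of the earlier conditions that the later arcs are nearly equidistributed within each piece of the earlier event.

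First I would fix an integer $v\geq 1$ and integers $k_n<q_n$, and write $P_v=\prob(\bigcap_{n=1}^{v}E_n)$, the probability that $X$ lands in the intersection of the $v$ preimages. I would estimate this by conditioning successively: I would show that
\[
\frac{\prob(E_1\cap\cdots\cap E_n)}{\prob(E_1\cap\cdots\cap E_{n-1})}\leq \frac{1}{q_n}\left(1+\frac{C a_n}{r_n a_{n+1}}\right)
\]
for a suitable absolute constant $C$, by comparing the Lebesgue measure of the set carved out at step $n$ inside the already-constructed set at step $n-1$ with the ``ideal'' independent value $1/q_n=\prob(E_n)$ (up to the $\lceil\cdot\rceil$ rounding in $q_n$, which contributes the harmless leading factor $3$). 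The error term reflects the number of period-boundaries of the step-$n$ condition that can fall inside the roughly $a_{n-1}/(a_nq_n)$-sized components surviving from the previous steps; this is where the ratio $a_n/(r_na_{n+1})$ enters, since the components at stage $n$ have size controlled by $1/a_{n+1}$ while the stage-$n$ arcs have spacing $1/a_n$ and width $\sim r_n/a_n$.

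Multiplying these successive ratios telescopes to
\[
P_v\leq 3\prod_{n=1}^{v}\frac{1}{q_n}\prod_{n=1}^{v}\left(1+\frac{Ca_n}{r_na_{n+1}}\right)
\leq 3\,\Bigl(\prod_{n=1}^{v}\prob(E_n)\Bigr)\exp\left(C\sum_{n=1}^{\infty}\frac{a_n}{r_na_{n+1}}\right),
\]
using $1+x\leq \ee^{x}$ and $\prob(E_n)=1/q_n$ (the points are uniformly distributed, so $\prob(X_n\in I_{n,k_n})=1/q_n$ exactly). Dividing by $\prod_{n=1}^v\prob(E_n)$ gives the bound on $\theta$, and taking the supremum over $v$ and over all admissible $(k_n)_{n\geq 1}$ yields the claimed inequality for $\Theta$ once $C$ is checked to be at most $4$.

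The main obstacle, and the step requiring genuine care, is the per-step conditional estimate: I must control the measure of a union of $a_n$ short arcs intersected with a set that is itself a union of many short arcs inherited from steps $1,\ldots,n-1$, and show the discrepancy from the product measure is at most the stated factor. The clean way to do this is an induction in which the surviving set after stage $n$ is maintained as a union of intervals whose individual lengths are bounded above in terms of $1/a_{n+1}$, so that at the next stage each such interval either straddles at most a controlled number of stage-$(n+1)$ arcs or sits inside a single period; the equidistribution of the $a_{n+1}$ evenly spaced arcs then gives the comparison, with the boundary intervals producing precisely the $a_n/(r_na_{n+1})$ error. Keeping track of the interval lengths through the induction, rather than only their total measure, is what makes the argument go through, and is the delicate point to get right.
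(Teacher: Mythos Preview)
Your overall strategy is the same as the paper's, but the displayed per-step inequality is wrong as written: its left-hand side depends only on $a_1,\dots,a_n$ and $q_1,\dots,q_n$, while the right-hand side involves $a_{n+1}$. Sending $a_{n+1}\to\infty$ would force $\prob(E_1\cap\cdots\cap E_n)\leq\prob(E_1\cap\cdots\cap E_{n-1})/q_n$, and this already fails for $a_1=a_2$, $q_1=q_2\geq 2$, $k_1=k_2$ (then $E_1=E_2$ and the ratio equals $1$). The boundary error produced when you impose the $n$-th constraint comes from cutting the components of $S_{n-1}\subseteq E_{n-1}$ by the $1/a_n$-periodic set $E_n$, so it involves $a_{n-1}$ and $a_n$, not $a_n$ and $a_{n+1}$; once you shift the index, the telescoped product becomes the right one. (Your description in the last paragraph has the same shift: the components surviving after stage $n$ have length at most $1/(a_nq_n)$, controlled by $a_n$, not by $a_{n+1}$.)

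There is also a more structural issue. Even with the indices fixed, the ratio of the \emph{actual} measures $|S_n|/|S_{n-1}|$ is not what can be bounded step by step; what the argument really controls is a coarsened quantity. One propagates an upper set $\tilde S_n\supseteq S_n$ consisting of the full intervals of $E_n$ that meet $\tilde S_{n-1}$, so that $\tilde S_n$ has at most $M_n\leq a_n$ components, each of length exactly $1/(a_nq_n)$. The recursion then reads $M_n/a_n\leq (M_{n-1}/a_{n-1})\bigl(1/q_{n-1}+2a_{n-1}/a_n\bigr)$, and since $|S_v|\leq|\tilde S_v|=M_v/(a_vq_v)$ the product bound follows; this is precisely the inequality the paper records (in one line) as
\[
\int_0^1\prod_{n=1}^v\ind_{I_{n,k_n}}(\{a_nx\})\,\dd x\leq\Bigl(1+\frac{2}{a_1}\Bigr)\prod_{n=1}^v\Bigl(\frac{1}{q_n}+\frac{2a_n}{a_{n+1}}\Bigr).
\]
Your final paragraph correctly singles out tracking interval structure rather than total measure as the crux, but the conditional-probability formulation in the middle of your argument does not implement this and would not go through as stated. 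One minor point: the leading factor $3$ is not a rounding artifact of $q_n=\lceil 1/r_n\rceil$; it arises from the first step via $(1+2/a_1)\leq 3$, while the bound $q_n\leq 2/r_n$ is what converts the constant $2$ into the $4$ appearing in the exponent.
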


\begin{proof}
Making use of the notations of Section~\ref{subsec:randomgenelow}, let $q_{n}=\lceil 1/r_{n}\rceil$ and let $I_{n,k}$ denote the image of the interval $[k/q_{n},(k+1)/q_{n})$ under the projection onto the circle. We now have
\[
\Theta((\{a_{n}X\})_{n\geq 1},\Bcal(\rseq))=\sup_{\kseq}\sup_{v\geq 1}\prob\left(\bigcap_{n=1}^{v} \bigl\{\{a_{n}X\}\in I_{n,k_{n}}\bigr\}\right)\prod_{n=1}^{v} q_{n},
\]
where the outer supremum is taken over the set of sequences $\kseq=(k_{n})_{n\geq 1}$ of nonnegative integers less than $q_{n}$. The random variables $\{a_{n}X\}$ are uniformly distributed on the circle; the probability arising above is thus equal to
\[
\int_{0}^{1}\prod_{n=1}^{v}\ind_{[k_{n}/q_{n},(k_{n}+1)/q_{n})}(\{a_{n}x\})\,\dd x
\leq \left(1+\frac{2}{a_{1}}\right) \prod_{n=1}^{v}\left(\frac{1}{q_{n}}+\frac{2a_{n}}{a_{n+1}}\right).
\]
We conclude by remarking that $q_{n}\leq 2/r_{n}$ for any $n\geq 1$, and that $1+x\leq\ee^{x}$ for any real number $x$.
\end{proof}

Proposition~\ref{prp:angrowTheta} directly shows that when the sequence
of integers $(a_{n})_{n \geq 1}$ grows fast enough to ensure the convergence of the series $\sum_{n}a_{n}/(r_{n}a_{n+1})$, 
then the dependence between the random points $\{a_{n}X\}$ is sufficiently weak to guarantee that all the results of Section~\ref{sec:randomgene} are applicable. In that situation, this leads us to a rather precise description of the size of the intersection of the random set $\Gcal(\aseq,\rseq)$ with a fixed compact subset of the circle. By way of illustration, we shall now determine the Hausdorff dimension of such an intersection in the case where the compact set is regular, keeping in mind that the results of Section~\ref{sec:randomgene} actually yield much finer statements.

To this end, let $G$ denote a compact subset of the circle $\T$, and let us suppose that $G$ is regular with dimension $\gamma\in(0,1]$. Let us also assume that the sequence $\rseq$ satisfies~(\ref{eq:sumrnnu}) for some real $\rho>0$. The convergence of the aforementioned series is then guaranteed when
\begin{equation}\label{eq:condgrowthan}
\liminf_{n\to\infty}\frac{\log (a_{n}/a_{n+1})}{\log r_{n}}>1+\rho.
\end{equation}
In that situation, we may apply all the results of Section~\ref{sec:randomgene}. Therefore, when $\rho+\gamma<1$, Theorem~\ref{thm:majsizegene} ensures that the intersection $\Gcal(\aseq,\rseq)\cap G$ is almost surely empty. When $\rho+\gamma$ is equal to one, the intersection is empty with probability one or zero, according to the convergence or divergence of the series $\sum_{n} r_{n}^{\rho}$, respectively; this is due to Theorems~\ref{thm:majsizegene} and~\ref{thm:nonemptygene}. Finally, when $\rho+\gamma>1$, Theorem~\ref{thm:nonemptygene} implies that the intersection is almost surely nonempty; by virtue of Theorems~\ref{thm:majsizegene} and~\ref{thm:minsizegene}, its Hausdorff dimension then satisfies
\[
\as \qquad \Hdim\left(\Gcal(\aseq,\rseq)\cap G\right)=\min\{\rho,1\}+\gamma-1.
\]

With a view to establishing a connection with existing results from metric number theory, let us consider the particular case where the radii $r_{n}$ are of the form $n^{-\nu}$, where $\nu$ is a positive real number. The critical exponent coming into play in~(\ref{eq:sumrnnu}) is then given by $\rho=1/\nu$. Furthermore, the condition~(\ref{eq:condgrowthan}) is verified regardless of the value of $\nu$ when the integers $a_{n}$ grow superexponentially fast, in the sense that
\begin{equation}\label{eq:superexpan}
\lim_{n\to\infty}\frac{\log (a_{n+1}/a_{n})}{\log n}=\infty,
\end{equation}
which we assume in what follows. In view of the above discussion, we deduce that with probability one,
\[
\Hdim\left(\Gcal(\aseq,(n^{-\nu})_{n\geq 1})\cap G\right)=
\begin{cases}
\gamma & \text{if } \nu\leq 1 \\[2mm]
1/\nu+\gamma-1 & \text{if } 1<\nu\leq 1/(1-\gamma) \\[2mm]
-\infty & \text{if } \nu>1/(1-\gamma). \\[2mm]
\end{cases}
\]
An emblematic situation is when the compact set $G$ is the middle-third Cantor set $K$, which is regular with dimension $\kappa$ given by~(\ref{eq:dimCantor}). Furthermore, in the mere situation where $G$ is the whole circle $\T$, considering sequences $\aseq=(a_{n})_{n\geq 1}$ that grow superexponentially fast is also new. In those two cases, adopting notations that are more customary in the metric theory of Diophantine approximation, we may rewrite the previous result as follows.

\begin{thm}\label{thm:approxfrac}
Let $(a_{n})_{n\geq 1}$ be a sequence of positive integers such that~(\ref{eq:superexpan}) holds. Then, for Lebesgue almost every real $\alpha$ and for every real $\nu\geq 1$,
\[
\Hdim\left\{\xi\in \R \:\biggl|\: \|a_{n}\alpha-\xi\|<\frac{1}{n^{\nu}} \text{~for i.m.~} n\geq 1\right\}=\frac{1}{\nu} 
\]
and
\[
\Hdim\left\{\xi\in K\:\biggl|\: \|a_{n}\alpha-\xi\|<\frac{1}{n^{\nu}} \text{~for i.m.~} n\geq 1\right\}=\frac{1}{\nu}+\kappa-1
\]
if this value is nonnegative; otherwise, the latter set is empty.
\end{thm}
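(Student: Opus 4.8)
The plan is to recognize the two sets in the statement as instances of the random sets $\Gcal(\aseq,\rseq)$ studied in Section~\ref{sec:approxfrac}, and then to upgrade the per-exponent almost sure conclusions into a single almost sure event valid simultaneously for every $\nu\geq 1$. First I would take $X=\alpha$, so that the approximating points are $\{a_{n}\alpha\}$, and observe that on the circle $\dist(\xi,\{a_{n}\alpha\})=\|a_{n}\alpha-\xi\|$; hence, with $r_{n}=n^{-\nu}$, the set $\Gcal(\aseq,(n^{-\nu})_{n\geq1})$ is precisely the set of $\xi\in\T$ with $\|a_{n}\alpha-\xi\|<n^{-\nu}$ for infinitely many $n$. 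The two sets of the theorem are then its intersections with $G=\T$, which is regular with dimension $1$, and with $G=K$, which is regular with dimension $\kappa$; by one-periodicity of $\xi\mapsto\|a_{n}\alpha-\xi\|$, replacing $\T$ by $\R$ merely tiles the circle set by integer translates and leaves the Hausdorff dimension unchanged. Since~\eqref{eq:superexpan} forces $\log(a_{n+1}/a_{n})/\log n\to\infty$, condition~\eqref{eq:condgrowthan} holds for every $\nu$ and every $\rho$, which makes the series $\sum_{n}a_{n}/(r_{n}a_{n+1})$ converge; by Proposition~\ref{prp:angrowTheta} this gives $\Theta((\{a_{n}X\})_{n\geq1},\Bcal((n^{-\nu})_{n\geq1}))<\infty$, so all the results of Section~\ref{sec:randomgene} become available.

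Next, for each fixed $\nu\geq 1$ I would invoke the case analysis carried out just before the statement, with critical exponent $\rho=1/\nu$ from~\eqref{eq:sumrnnu}. Applied with $\gamma=1$ it yields, almost surely, $\Hdim(\Gcal(\aseq,(n^{-\nu})_{n\geq1})\cap\T)=1/\nu$, since here $\rho+\gamma=1/\nu+1>1$, so the intersection is nonempty and the dimension equals $\min\{\rho,1\}+\gamma-1$. Applied with $\gamma=\kappa$ it yields, almost surely, the value $1/\nu+\kappa-1$ when $1\leq\nu<1/(1-\kappa)$, emptiness when $\nu>1/(1-\kappa)$, and, in the critical case $\nu=1/(1-\kappa)$ where $\rho+\gamma=1$ and $\sum_{n}r_{n}^{\rho}=\sum_{n}n^{-1}$ diverges, a nonempty intersection of dimension $0$, matching $1/\nu+\kappa-1=0$.

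Finally, to exchange the quantifiers, I would intersect these almost sure events over the countable set $N$ consisting of all rationals in $[1,\infty)$ together with the single critical value $1/(1-\kappa)$, producing one null set of angles $\alpha$ outside of which the conclusion holds for every $\nu\in N$. For arbitrary $\nu\notin N$ I would use that $n^{-\nu}$ decreases as $\nu$ grows, so $\nu\mapsto\Gcal(\aseq,(n^{-\nu})_{n\geq1})$ is nonincreasing for inclusion and its dimension is nonincreasing in $\nu$. Where the target is finite and continuous, namely for $G=\T$ with any $\nu$ and for $G=K$ with $1\leq\nu<1/(1-\kappa)$, I would squeeze the dimension at $\nu$ between its values at rationals $\nu_{-}<\nu<\nu_{+}$ and let $\nu_{\pm}\to\nu$; for $G=K$ and $\nu>1/(1-\kappa)$, picking a rational $\nu_{-}\in(1/(1-\kappa),\nu)$, the set at $\nu$ sits inside the empty set at $\nu_{-}$ and is therefore empty. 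The main obstacle is exactly the phase-transition exponent $\nu=1/(1-\kappa)$: there the target jumps from $0$ to $-\infty$, so the monotonicity squeeze fails on the right and neither the exact value $0$ nor the nonemptiness can be recovered by a limiting argument. This is why I insert that single value into $N$ and pin it down directly through the divergence $\sum_{n}n^{-1}=\infty$ in the critical case of Theorems~\ref{thm:nonemptygene} and~\ref{thm:majsizegene}.
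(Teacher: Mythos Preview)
Your proposal is correct and follows essentially the same route as the paper: identify the sets as $\Gcal(\aseq,(n^{-\nu})_{n\geq 1})\cap G$ for $G=\T$ and $G=K$, verify the weak dependence condition via Proposition~\ref{prp:angrowTheta} using the superexponential growth hypothesis, and then read off the dimensions from the case analysis based on Theorems~\ref{thm:majsizegene}--\ref{thm:minsizegene}. You are in fact more careful than the paper about the order of quantifiers: the paper derives the conclusion for a fixed $\nu$ with probability one and then asserts the ``a.s.\ for all $\nu$'' version without comment, whereas you supply the monotonicity-and-density argument (with the critical value $1/(1-\kappa)$ handled separately, using the divergence of $\sum_n n^{-1}$) that actually justifies this exchange.
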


Note that a simple example of a sequence $(a_n)_{n \ge 1}$
for which~(\ref{eq:condgrowthan}) is verified is 
given by $a_{n}=\lfloor n^{(1+\rho+\eps)n}\rfloor$ for $n\geq 1$, where $\eps$ is
any fixed positive real number. In addition, one easily checks that the more stringent condition~(\ref{eq:superexpan}) is satisfied for instance by the sequences of the form $a_{n}=n^{n b_{n}}$, where $(b_{n})_{n\geq 1}$ is an auxiliary sequence of positive integers that monotonically diverges to infinity. 

Our approach fails when the condition~(\ref{eq:superexpan}) is not verified, because there may be too much dependence between the fractional parts $\{a_{n}\alpha\}$, $n \ge 1$,  
for typical values of $\alpha$. This is the case in particular when $(a_{n})_{n \geq 1}$ has a linear or geometric growth. In those cases, however, the situation is well understood if one is not interested in taking the intersection with the Cantor set. In fact, when $a_{n}=n$, it is shown in~\cite{Bugeaud:2003ye,Schmeling:2003} that, for every irrational real number $\alpha$ and every real number $\nu\geq 1$,
\[
\Hdim\left\{\xi\in\R\:\biggl|\: \|n\alpha-\xi\|<\frac{1}{n^{\nu}} \text{~for i.m.~} n\geq 1\right\}=\frac{1}{\nu}.
\]
The case in which $a_{n}=2^{n}$ has been investigated by Fan, Schmeling, and Troubetzkoy~\cite{Fan:2007fj}, and also by Liao and Seuret~\cite{Liao:2013fk}. In particular, these authors determined the value of
\[
\Hdim\left\{\xi\in\R\:\biggl|\: \|2^{n}\alpha-\xi\|<\frac{1}{n^{\nu}} \text{~for i.m.~} n\geq 1\right\}
\]
when the real number $\alpha$ is chosen according to a variety of invariant Gibbs measures associated with the doubling map.

\section{Concluding remarks and further problems}\label{sec:remprob}

\subsection{Approximation by algebraic numbers of bounded degree}

One natural way to extend the theorem of Jarn\'\i k and Besicovitch is the study of the approximation to real numbers by algebraic numbers of bounded degree. For $n\geq 1$, the accuracy with which real numbers are approximated by algebraic numbers of degree at most $n$ is measured by means of the exponents $w_{n}^{\ast}$, introduced in~1939 by Koksma \cite{Koksma:1939fk}. (Although we do not introduce Mahler's exponents $w_{n}$, we prefer to keep the standard notation $w_{n}^{\ast}$ for Koksma's exponents.)

Recall that the height $H(P)$ of an integer polynomial $P(X)$ is the maximum of the moduli of its coefficients, and the height $H(a)$ of an algebraic number $a$ is the height of its minimal polynomial over $\Z$. For any integer $n\geq 1$ and any real number $\xi$, the exponent $w_{n}^{\ast}(\xi)$ is defined as the supremum of the real numbers $w^{\ast}$ for which the inequality
\begin{equation}\label{eq:defwnstar}
0<|\xi-a|\leq H(a)^{-w^{\ast}-1}
\end{equation}
is satisfied for infinitely many algebraic numbers $a$ of degree at most $n$. Clearly, every real number $\xi$ satisfies
\[
\mu(\xi)=w_{1}^{\ast}(\xi)+1.
\]
This shows that the exponents $w_{n}^{\ast}$ with $n\geq 2$ extend in a natural way the irrationality exponent $\mu$.

The introduction of the exponent $-1$ in~(\ref{eq:defwnstar}) is explained on~\cite[p.~48]{Bugeaud:2004fk}. The reader is directed to this monograph for known results on the exponents $w_{n}^{\ast}$. We only mention here that $w_{n}^{\ast}(\xi)=\min\{n,d-1\}$ for every real algebraic number $\xi$ of degree $d$ and that Lebesgue almost all real numbers $\xi$ satisfy $w_{n}^{\ast}(\xi)=n$ for all $n \geq 1$. In~1970, Baker and Schmidt~\cite{Baker:1970jf} extended the theorem of Jarn\'\i k and Besicovitch to the exponents $w_{n}^{\ast}$. They established that, for every integer $n\geq 1$ and every real number $w^{\ast}\geq n$, the set
\[
\Ucal_{n}(w^{\ast})=\{\xi\in\R  \:|\: w_{n}^{\ast}(\xi)\geq w^{\ast}\}
\]
satisfies
\begin{equation}\label{eq:dimmnwn}
\Hdim\Ucal_{n}(w^{\ast})=\frac{n+1}{w^{\ast}+1}.
\end{equation}
Note that~(\ref{eq:JarnikBesicovitch}) and~(\ref{eq:dimmnwn}) coincide, as expected, for $n=1$. Some further metric properties of the sets $\Ucal_{n}(w^{\ast})$ were obtained in~\cite{Bugeaud:2004wc,Durand:2007uq}; in particular, it is proven in those two papers that the above sets are intersective in the sense of Falconer~\cite{Falconer:1994hx}.

The result due to Weiss that is mentioned at the very beginning of Section~\ref{sec:intro} was extended to the exponents $w_{n}^{\ast}$ by Kleinbock, Lindenstrauss and Weiss; they proved that, with respect to the standard measure on the middle-third Cantor set, almost all points $\xi$ satisfy
\[
\forall n\geq 1 \qquad w_{n}^{\ast}(\xi)=n,
\]
see~\cite[Proposition 7.10]{Kleinbock:2004fk}. This motivates the following open question.

\begin{pb}
Let $n\geq 1$ be an integer and $w^{\ast}\geq n$ be a real number. To determine the Hausdorff dimension of the set
\[
\Ucal_{n}(w^{\ast})\cap K=\{\xi\in K \:|\: w_{n}^{\ast}(\xi)\geq w^{\ast}\}.
\]
\end{pb}

As regards this problem, we believe that the following natural extension of Conjecture~\ref{conj:main} holds.

\begin{conj}\label{conj:mainwn}
For any integer $n\geq 1$ and any real number $w^{\ast}\geq n$, the set of points in the middle-third Cantor set which are approximable at order at least $w^{\ast}+1$ by algebraic numbers of degree at most $n$ satisfies
\begin{equation}\label{eq:conjmainwn}
\Hdim(\Ucal_{n}(w^{\ast})\cap K)
=\max\left\{\frac{n+1}{w^{\ast}+1}+\kappa-1,\frac{\kappa}{w^{\ast}+1}\right\}.
\end{equation}
\end{conj}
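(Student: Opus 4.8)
The plan is to follow the blueprint developed for Conjecture~\ref{conj:main}, exploiting the dictionary in which the exponent $2$ governing the density of the rationals is replaced by $n+1$, the exponent governing the density of real algebraic numbers of degree at most $n$, and the order of approximation $\mu$ is replaced by $w^{\ast}+1$. Under this correspondence, the two expressions inside the maximum in~(\ref{eq:conjmainwn}) arise from exactly the two regimes described after~(\ref{eq:defmucrit}): a generic regime below a critical value of $w^{\ast}$, where the codimensions of $\Ucal_n(w^{\ast})$ and $K$ simply add up, and a regime above it, where the algebraic numbers lying on or very near $K$ become predominant. I would establish the lower and upper bounds separately and, as an unconditional deliverable, prove the randomized counterpart of~(\ref{eq:conjmainwn}) using the machinery of Sections~\ref{sec:randomgene} and~\ref{sec:approxindep}.

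For the lower bound, the term $\kappa/(w^{\ast}+1)$ is in fact already rigorous: since algebraic numbers of degree at most $n$ include the rationals, one has $\Ucal_1(w^{\ast})\subseteq\Ucal_n(w^{\ast})$ and $\Ucal_1(w^{\ast})=\Mcal(w^{\ast}+1)\supseteq\Vcal_3(w^{\ast})$ by~(\ref{eq:mugeqvb}), so the Levesley--Salp--Velani formula~(\ref{eq:dimKv3geq}) yields $\Hdim(\Ucal_n(w^{\ast})\cap K)\geq\kappa/(w^{\ast}+1)$. The remaining term $(n+1)/(w^{\ast}+1)+\kappa-1$ is the hard part of the lower bound; I would obtain it from a ubiquity statement for real algebraic numbers of degree at most $n$, in the spirit of Baker--Schmidt~\cite{Baker:1970jf}, combined with the mass transference principle of Beresnevich and Velani~\cite{Beresnevich:2005vn} adapted to the regular set $K$, exactly as in the proof of the lemma computing $\Hdim\Ecal(\Xseq,\rseq^{\nu})$ in Section~\ref{sec:approxindep}. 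The delicate point is that such a ubiquity statement must be quantitative relative to the natural measure on $K$: one needs the balls $\opball{a}{H(a)^{-w^{\ast}-1}}$ centred at algebraic numbers to cover $K$ efficiently at the critical scale, which requires a genuine equidistribution of algebraic numbers transverse to the Cantor structure.

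For the upper bound I would run the covering argument of Section~\ref{subsec:heuristic}. Writing $\Acal^{w^{\ast}}_j$ for the set of real algebraic numbers $a$ of degree at most $n$ with $2^{j}\leq H(a)<2^{j+1}$ and $\dist(a,K)<2^{-(w^{\ast}+1)j}$, one has (working as usual with $w^{\ast}-\eps$ and letting $\eps$ tend to zero)
\[
\Ucal_n(w^{\ast})\cap K\subseteq\bigcup_{j\geq j_0}\bigcup_{a\in\Acal^{w^{\ast}}_j}\opball{a}{2^{-(w^{\ast}+1)j}},
\]
so that, by the Hausdorff--Cantelli lemma, $\Hdim(\Ucal_n(w^{\ast})\cap K)\leq s$ whenever $\sum_j \#\Acal^{w^{\ast}}_j\,2^{-(w^{\ast}+1)js}$ converges. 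Combining the classical count $\#\{a:\deg a\leq n,\ H(a)\sim 2^{j}\}\asymp 2^{(n+1)j}$ with the Ahlfors regularity of $K$, which makes the $2^{-(w^{\ast}+1)j}$-neighbourhood of $K$ have measure of order $2^{-(w^{\ast}+1)(1-\kappa)j}$, leads to the heuristic estimate $\frac1j\log_2\#\Acal^{w^{\ast}}_j\approx(n+1)-(w^{\ast}+1)(1-\kappa)$, to which one must add the contribution of the algebraic numbers actually lying in $K$; dividing the resulting exponent by $w^{\ast}+1$ reproduces precisely the maximum in~(\ref{eq:conjmainwn}). Making this rigorous requires the analogue for degree at most $n$ of the Broderick--Fishman--Reich counting conjecture~(\ref{eq:conjBFR1}), namely
\[
\limsup_{j\to\infty}\frac{1}{j}\log_2\#\Acal^{w^{\ast}}_j\leq\max\{(n+1)-(w^{\ast}+1)(1-\kappa),\kappa\};
\]
the term $\kappa$ encodes the expectation that algebraic numbers of bounded degree do not accumulate on $K$ faster than the rationals do. This counting estimate is, as in the case $n=1$, the main obstacle, since it demands a precise understanding of the distribution of algebraic numbers of bounded degree near the middle-third Cantor set. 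I would expect, on the other hand, the doubly metric viewpoint of Section~\ref{subsec:doubmet} to survive unconditionally: rotating $K$ by a generic angle should destroy any arithmetic coincidence between algebraic numbers and $K$, yielding the bound $(n+1)/(w^{\ast}+1)+\kappa-1$ for almost every rotation by the same slicing argument as in Theorem~\ref{thm:genupbndMcalmualphaK}.

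Finally, the randomized version of~(\ref{eq:conjmainwn}) can be proved rigorously by the exact scheme of Theorem~\ref{thm:conjmainrand}. One replaces each algebraic number $a$ by a random point, drawn uniformly in $K$ when $a$ is conjecturally on or near $K$ and uniformly in $\T$ otherwise, keeping the radius $H(a)^{-w^{\ast}-1}$; splitting the resulting limsup set into a near-$K$ part and a bulk part, one applies Proposition~\ref{prp:approxunifGregular} with critical exponent $\rho=\kappa$ and Proposition~\ref{prp:approxunifT} with critical exponent $\rho=n+1$, and takes the maximum. This produces $\max\{(n+1)/(w^{\ast}+1)+\kappa-1,\kappa/(w^{\ast}+1)\}$ with probability one, thereby confirming Conjecture~\ref{conj:mainwn} for the probabilistic model and isolating the counting estimate above as the sole arithmetic ingredient separating the model from the truth.
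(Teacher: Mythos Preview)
The statement is a conjecture, not a theorem: the paper does not prove it and presents it explicitly as an open problem, followed only by supporting heuristics and partial results. Your proposal is therefore not a proof either, and you are right to frame it as a plan rather than a demonstration.

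That said, your outline coincides almost point for point with the paper's own discussion following the conjecture. The dictionary $2\leadsto n+1$, $\mu\leadsto w^{\ast}+1$; the unconditional lower bound $\kappa/(w^{\ast}+1)$ via $\Vcal_3(w^{\ast})\subseteq\Ucal_1(w^{\ast})\subseteq\Ucal_n(w^{\ast})$ and~(\ref{eq:dimKv3geq}); the covering heuristic leading to the counting estimate
\[
\limsup_{j\to\infty}\frac{1}{j}\log_{3}\#\Acal^{n,w^{\ast},j}_{K}\leq\max\{n+1-(1-\kappa)(w^{\ast}+1),\kappa\}
\]
(you use base $2$, the paper base $3$, an inessential difference); the doubly metric rotation bound extending Theorem~\ref{thm:genupbndMcalmualphaK}; and the randomized model via Propositions~\ref{prp:approxunifT} and~\ref{prp:approxunifGregular} with $\rho=n+1$ and $\rho=\kappa$ respectively --- all of this is exactly what the paper sketches in Section~\ref{sec:remprob}. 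You correctly isolate the counting estimate as the sole genuine obstacle, which is also the paper's assessment. In short: your plan is sound and matches the paper's, but neither you nor the authors have a proof.
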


A noteworthy result towards this conjecture was established by Kristensen~\cite{Kristensen:2006ys}. Specifically, extending the covering argument used in~\cite{Pollington:2005fk,Weiss:2001mz}, he proved the following upper bound:
\begin{equation}\label{eq:upbnddimUmwK}
\Hdim(\Ucal_{n}(w^{\ast})\cap K) \leq \frac{2n \kappa}{w^{\ast}+1}.
\end{equation}
Furthermore, when $n$ is fixed and $h$ varies, the number of algebraic numbers with degree at most $n$ and height equal to $h$ that belong to the circle $\T$ is of the order of $h^{n}$. Therefore, in the light of the approach developed in Section~\ref{subsec:doubmet}, the following extension of Theorem~\ref{thm:genupbndMcalmualphaK} plainly holds: for Lebesgue almost every angle $\alpha\in\T$, we have both
\[
w_{n}^{\ast}(\xi)\leq\frac{n+1}{1-\kappa}-1
\]
for all points $\xi\in\alpha+K$, and
\[
\Hdim(\Ucal_{n}(w^{\ast})\cap(\alpha+K))\leq\frac{n+1}{w^{\ast}+1}+\kappa-1
\]
for all real numbers $w^{\ast}$ between $n$ and $(n+1)/(1-\kappa)-1$. Note that this last generic bound is much more stringent than~(\ref{eq:upbnddimUmwK}).

Let us mention that virtually all the ideas developed in this paper may be adapted to the setting of the approximation by algebraic numbers. In particular, as in Section~\ref{subsec:probconjmain}, one may define an appropriate probabilistic counterpart of the exponents $w_{n}^{\ast}$ and establish the corresponding version of Conjecture~\ref{conj:mainwn}. Likewise, the same heuristic arguments as those put forward in Section~\ref{subsec:heuristic} suggest that~(\ref{eq:conjcardPKmuj}) can be extended to
\[
\limsup_{j\to\infty}\frac{1}{j}\log_{3}\#\Acal^{n,w^{\ast},j}_{K}\leq\max\{n+1-(1-\kappa)(w^{\ast}+ 1),\kappa\},
\]
where $\Acal^{n,w^{\ast},j}_{K}$ denotes the set of all algebraic numbers $a\in\T$ with degree at most $n$ that satisfy both $3^{j}\leq H(a)<3^{j+1}$ and $\dist(a, K)<3^{-(w^{\ast}+1)j}$. Such an upper bound would obviously be in favor of the validity of Conjecture~\ref{conj:mainwn}.

\subsection{A more general framework}

All the number theoretical problems discussed above can be put in a same general framework. We consider the following question. Let $\xseq=(x_{n})_{n\geq 1}$ be a sequence of points in $\T$. Given a real number $\nu\geq 1$, let us consider the set
\[
\Hcal(\xseq,\nu)=\{\xi\in\T \:|\: \dist(\xi,x_{n})<n^{-\nu} \text{~for i.m.~} n\geq 1 \}.
\]
When the sequence $\xseq$ forms a regular system in the sense of~\cite[Chapter~5]{Bugeaud:2004fk}, we have
\[
\Hdim\Hcal(\xseq,\nu)=\frac{1}{\nu}.
\]
Note that this general statement includes~(\ref{eq:dimmnwn}) after having suitably numbered the algebraic numbers in $\T$ of degree at most $n$, see~\cite[Lemma~5.4]{Bugeaud:2004fk}.

The general problem that we are concerned with is the estimation of the Hausdorff dimension of the set
\[
\Hcal(\xseq,\nu)\cap G=\{\xi\in G \:|\: \dist(\xi,x_{n})<n^{-\nu} \text{~for i.m.~} n\geq 1 \},
\]
where $G$ is a compact subset of the circle $\T$ which, for simplicity, is supposed to be regular with dimension $\gamma\in(0,1)$. Here, we take the intersection of two null sets of very different nature. The set $G$ is compact and nowhere dense, whereas when $\xseq$ forms a regular system, the set $\Hcal(\xseq,\nu)$ is an intersective set in the sense of Falconer~\cite{Falconer:1994hx}, see~\cite{Bugeaud:2004wc,Durand:2007uq}. Even giving an accurate upper bound on the Hausdorff dimension of the intersection set $\Hcal(\xseq,\nu)\cap G$ is challenging.

\begin{pb}\label{pb:prob2}
Find reasonable conditions under which one can prove either of the upper bounds
\[
\Hdim(\Hcal(\xseq,\nu)\cap G)\leq\frac{\gamma}{\nu}
\qquad\text{or}\qquad
\Hdim(\Hcal(\xseq,\nu)\cap G)\leq\frac{1}{\nu}+\gamma-1.
\]
\end{pb}

Note in passing that the latter bound is more stringent than the former. Moreover, a preliminary step towards the first bound in Problem~\ref{pb:prob2} would be to understand for which sequences $\xseq$ one can apply the arguments of~\cite{Pollington:2005fk,Weiss:2001mz}. As regards Problem~\ref{pb:prob2}, the only general result that may be deduced from the present paper is again an extension of Theorem~\ref{thm:genupbndMcalmualphaK}, namely: for Lebesgue almost every angle $\alpha\in\T$,
\[
\Hdim(\Hcal(\xseq,\nu)\cap (\alpha+G))\leq\frac{1}{\nu}+\gamma-1\,;
\]
in particular, the intersection set is empty if the last bound is negative. This means that the second bound in Problem~\ref{pb:prob2} holds when the compact set $G$ is rotated in a generic manner. Moreover, inspecting the proof of Theorem~\ref{thm:genupbndMcalmualphaK}, we see that the above result still holds when $G$ is not regular but only satisfies $\prepack^{g}(G)<\infty$.

Restricting to the case where $G$ is the middle-third Cantor set $K$, one may also point out the following question.

\begin{pb}\label{pb:prob3}
Compare the values of the Hausdorff dimensions
\[
\Hdim\Hcal(\xseq,\nu) \qquad\text{and}\qquad \Hdim(\Hcal(\xseq,\nu)\cap K).
\]
\end{pb}

The following two examples show that there is no hope of getting a general answer to Problem~\ref{pb:prob3}; we in fact have two extremal cases:
\begin{enumerate}
\item Assume that $\xseq=(x_{n})_{n\geq 1}$ denotes the natural enumeration of the rational numbers in $\T$ of the form $p/3^{j}$ such that $\gcd(3,p)=1$ and $p$ has only digits $0$ and $2$ in its ternary representation. Then, the denominator of $x_{n}$ is of the order of $n^{1/\kappa}$, and Corollary~1 in~\cite{Levesley:2007pd} implies that for all $\nu\geq 1/\kappa$,
\[
\Hdim\Hcal(\xseq,\nu)=\Hdim(\Hcal(\xseq,\nu)\cap K)=\frac{1}{\nu}. 
\]
Note that we even have $\Hcal(\xseq,\nu)\cap K = \Hcal(\xseq,\nu)$.
\item Now, assume that $\xseq=(x_{n})_{n\geq 1}$ is the natural enumeration of the rational numbers in $\T$ of the form $p/3^{j}-1/(2\cdot 3^{j})$ such that $\gcd(3,p)=1$ and $p$ has only digits $0$ and $2$ in its ternary representation. Then, the denominator of $x_{n}$ is still of the order of $n^{1/\kappa}$ and for all $\nu\geq 1/\kappa$,
\[
\Hdim\Hcal(\xseq,\nu)=\frac{1}{\nu}. 
\]
However, each point $x_{n}$ is very far from $K$, namely, at a distance of the order of $n^{1/\kappa}$. Thus, in this case, the set $\Hcal(\xseq,\nu)\cap K$ is empty for $\nu>1/\kappa$. 
\end{enumerate}

\section{Proof of the main results}\label{sec:proofmain}

\subsection{Proof of Theorem~\ref{thm:majsizegene}}\label{sec:proofthmmajsizegene}

Recall that the set $I_{n,k}$ is defined in Section~\ref{subsec:randomgenelow} as the image of the interval $[k/q_{n},(k+1)/q_{n})$ under the projection modulo one. Throughout the proof of Theorem~\ref{thm:majsizegene}, we choose $q_{n}$ to be equal to the integer part of $1/r_{n}$, that is, $q_{n}=\lfloor 1/r_{n}\rfloor$. The circle $\T$ may thus be seen as the disjoint union over $k\in\{0,\ldots,q_{n}-1\}$ of the sets $I_{n,k}$. Then, let $K_{n}(G)$ denote the set of integers $k$ for which the compact set $G$ intersects $I_{n,k}$. In addition, let $I_{n,k}'$ be the closed subinterval of $\T$ which has the same midpoint as $I_{n,k}$ and is three times larger. It is now easy to check that
\begin{equation}\label{eq:GcapBxrsub}
\forall n\geq 1 \qquad \opball{X_{n}}{r_{n}}\cap G\subseteq\bigcup_{k\in K_{n}(G) \atop X_{n}\in I_{n,k}'} I_{n,k},
\end{equation}
where $\opball{X_{n}}{r_{n}}$ denotes the open interval centered at $X_{n}$ with radius $r_{n}$.

Furthermore, note that for any $k\in K_{n}(G)$, there exists a point $\xi_{k}\in G\cap I_{n,k}$. Thus, there exists a subset $K_{n}'(G)$ of $K_{n}(G)$ with cardinality at least $\#K_{n}(G)/2$ such that $\dist(\xi_{k},\xi_{k'})>1/q_{n}$ for any distinct $k$ and $k'$ in $K_{n}'(G)$. In view of~\cite[Lemma~4]{Tricot:1982fk}, the finiteness of $\prepack^{g}(G)$ ensures that there exists a finite Borel measure $\chi$ such that $\chi(\opball{x}{r})\geq g(r)$ for all $x\in G$ and $r\in(0,1)$. As a result,
\[
\chi(\T)\geq\sum_{k\in K_{n}'(G)} \chi(\opball{\xi_{k}}{1/(2q_{n})})\geq\frac{\#K_{n}(G)}{2}g\left(\frac{1}{2q_{n}}\right).
\]
Since the gauge function $g$ is nondecreasing and doubling, we deduce that
\begin{equation}\label{eq:majcardKnG}
\exists C>0 \quad \forall n\geq 1 \qquad \#K_{n}(G)\leq \frac{C}{g(r_{n})}.
\end{equation}

We may now prove the second statement of the theorem. Let us consider a doubling gauge function $h$ such that the series $\sum_{n} h(r_{n})r_{n}/g(r_{n})$ converges. For any real $\delta\in (0,1/2)$ and any integer $n_{0}\geq 1$, the inclusion~(\ref{eq:GcapBxrsub}) ensures that the set $\Ecal(\Xseq,\rseq)\cap G$ is covered by the intervals $I_{n,k}$ indexed by the integers $n\geq n_{0}$ and $k\in K_{n}(G)$ for which $X_{n}\in I_{n,k}'$. All these intervals have diameter $1/q_{n}$, which is smaller than $\delta$ for $n$ large enough, due to the convergence of the aforementioned series. As a consequence,
\[
\hau^{h}_{\delta}(\Ecal(\Xseq,\rseq)\cap G)\leq\sum_{n=n_{0}}^{\infty}h\left(\frac{1}{q_{n}}\right)\sum_{k\in K_{n}(G)} \ind_{\{X_{n}\in I_{n,k}'\}}.
\]
Making use of~(\ref{eq:majcardKnG}), the fact that the variables $X_{n}$ are uniformly distributed, and the fact that $h$ is doubling, we deduce that
\[
\esp[\hau^{h}_{\delta}(\Ecal(\Xseq,\rseq)\cap G)]
\leq\sum_{n=n_{0}}^{\infty}h\left(\frac{1}{q_{n}}\right)\#K_{n}(G)\frac{3}{q_{n}}
\leq 6CC'\sum_{n=n_{0}}^{\infty}\frac{h(r_{n})r_{n}}{g(r_{n})},
\]
where $C'$ depends on $h$ only. We may now let $n_{0}$ tend to infinity, and then let $\delta$ tend to zero. Fatou's lemma then implies that $\hau^{h}(\Ecal(\Xseq,\rseq)\cap G)$ has mean zero, and the second part of the theorem follows.

The constant function equal to one is not, strictly speaking, a gauge function in the sense of our definition. However, this function may be used instead of $h$ above, thereby leading to the first statement of the theorem. Indeed, in that situation, the Hausdorff measure is just the counting measure, and the previous arguments imply that $\#(\Ecal(\Xseq,\rseq)\cap G)$ has mean zero.

\subsection{Proof of Theorem~\ref{thm:nonemptygene}}\label{sec:proofthmnonemptygene}

By virtue of Frostman's lemma, see for instance~\cite[Theorem~8.8]{Mattila:1995fk}, the positivity of $\hau^{g}(G)$ implies that there exists a Borel measure $\chi$ such that
\begin{equation}\label{eq:Frostman}
\forall x\in\T \quad \forall r\in(0,1) \qquad \chi(\opball{x}{r})\leq g(r).
\end{equation}
Moreover, the support of the measure $\chi$ is a nonempty compact subset of $G$ that is denoted by $G'$ in what follows.

Thanks to a Baire category argument appearing in~\cite[p.~12]{Khoshnevisan:2000fj}, we only need to show that for any fixed open interval $I$ of the circle that intersects $G'$, the event
\[
E_{I}=\left\{\dist(X_{n},G'\cap I)<r_{n} \text{~for i.m.~} n\geq 1\right\}
\]
holds with probability one. Here, $\dist(X_{n},G'\cap I)$ denotes the distance from the point $X_{n}$ to the set $G'\cap I$. Indeed, assuming that this holds and letting $I$ run through a countable base of open intervals that generate the topology on $\T$, we deduce that, with probability one, all the events $E_{V}$, for $V$ running through the open sets that intersect $G'$, hold simultaneously. As a result, with probability one, for any open subset $V$ of the circle and any integer $u\geq 1$,
\[
G'\cap V\neq\emptyset \qquad\Longrightarrow\qquad G'\cap V\cap\bigcup_{n=u}^{\infty}\opball{X_{n}}{r_{n}}\neq\emptyset,
\]
which means that the above union is dense in the complete metric space $G'$. The Baire category theorem then ensures that the set $\Ecal(\Xseq,\rseq)\cap G'$ is almost surely dense in $G'$, and therefore nonempty. It follows that the set $\Ecal(\Xseq,\rseq)\cap G$ is almost surely nonempty as well.

Let us now consider an open interval $I$ that intersects $G'$ and show that the event $E_{I}$ holds with probability one. We shall make use of the same notations as in the proof of Theorem~\ref{thm:majsizegene}, except that we choose $q_{n}$ to be equal to $\lceil 1/r_{n}\rceil$, where $\lceil\,\cdot\,\rceil$ stands for the ceiling function. In addition to these notations, let $I_{n}(G)$ denote the union over $k\in K_{n}(G)$ of the intervals $I_{n,k}$. On easily checks that for every $n\geq 1$,
\[
X_{n}\in I_{n}(G) \qquad\Longrightarrow\qquad \dist(X_{n},G'\cap I)<r_{n}.
\]
Therefore, it suffices to show that with probability one, $X_{n}\in I_{n}(G)$ for infinitely many integers $n\geq 1$.

To this purpose, let us begin by observing that the complement of the set $I_{n}(G)$ is the union over $k\in K_{n}(G)^{\complement}$ of the sets $I_{n,k}$, where $K_{n}(G)^{\complement}$ denotes the complement of $K_{n}(G)$ in $\{0,\ldots,q_{n}-1\}$. Thus, for $u\leq v$,
\[
\prob\left(\bigcap_{n=u}^{v} \{X_{n}\not\in I_{n}(G)\}\right)
=\sum_{k_{u},\ldots,k_{v}} \prob\left(\bigcap_{n=u}^{v} \{X_{n}\in I_{n,k_{n}}\}\right)
\]
where each index $k_{n}$ arising in the sum runs over the set $K_{n}(G)^{\complement}$. All the terms in this sum are bounded from above by $\Theta(\Xseq,\Bcal(\rseq))/(q_{u}\cdot\ldots\cdot q_{v})$, so that the whole sum is bounded from above by 
\[
\Theta(\Xseq,\Bcal(\rseq)) \cdot \prod_{n=u}^{v} \frac{\#K_{n}(G)^{\complement}}{q_{n}}. 
\]
Using that $1-y\leq\ee^{-y}$ and $q_{n}<2/r_{n}$, we get
\[
\prod_{n=u}^{v} \frac{\#K_{n}(G)^{\complement}}{q_{n}}\leq\exp\left(-\frac{1}{2}\sum_{n=u}^{v} \#K_{n}(G)\, r_{n}\right).
\]
In addition, recall that the intersection of the sets $G$ and $I_{n,k}$ is nonempty if and only if $k$ belongs to $K_{n}(G)$. Moreover, in that situation, this intersection contains a point $x_{n,k}$ and is therefore included in the closed interval $\clball{x_{n,k}}{r_{n}}$ with radius $r_{n}$ centered at this point. With the help of~(\ref{eq:Frostman}), we deduce that
\[
\chi(\T)=\sum_{k\in K_{n}(G)}\chi(G\cap I_{n,k})\leq\sum_{k\in K_{n}(G)}\chi(\clball{x_{n,k}}{r_{n}})\leq\#K_{n}(G)\,g(r_{n}).
\]
As a consequence, the cardinality of the set $K_{n}(G)$ is bounded from below by $\chi(\T)/g(r_{n})$. We infer that
\[
\prob\left(\bigcup_{n_{0}=1}^{\infty}\bigcap_{n=n_{0}}^{\infty} \{X_{n}\not\in I_{n}(G)\}\right)
\leq\sum_{n_{0}=1}^{\infty}\exp\left(-\frac{\chi(\T)}{2}\sum_{n=n_{0}}^{\infty} \frac{r_{n}}{g(r_{n})}\right)=0,
\]
in view of the divergence of the series $\sum_{n} r_{n}/g(r_{n})$. The result follows.

\subsection{Proof of Lemma~\ref{lem:codim}}\label{subsec:prooflemcodim}

The proof relies on the existence of a family of compact sets $Q_{g}$ indexed by the gauge functions such that Proposition~\ref{prp:propQg} below holds. These compact sets are obtained through a slight generalization of Mandelbrot's fractal percolation process that we introduce and study in Section~\ref{sec:fracperc}. In the next statement, we make use of the notation $h\prec_{\ph} g$ defined by~(\ref{eq:defprecph}), and we also write $h\pprec g$ to indicate that two gauge functions $g$ and $h$ satisfy
\[
\sum_{j=1}^{\infty} g(2^{-j})\left(\frac{1}{h(2^{-j})}-\frac{1}{h(2^{-(j-1)})}\right)<\infty.
\]

\begin{prp}\label{prp:propQg}
There exists a family of compact sets $Q_{g}\subseteq\T$ indexed by the gauge functions such that for any set $E\subseteq\T$, and any gauge function $g$, the following properties hold:
\begin{enumerate}
\item\label{item:prp:propQg:1} Let us assume that $\hau^{g}(E)=0$. Then,
\[
\as \qquad E\cap Q_{g}=\emptyset.
\]
\item\label{item:prp:propQg:2} Let us assume that $E$ is Borel and $\hau^{g}(E)>0$. Then, for every gauge functions $h$ and $\ph$,
\[
\left\{\begin{array}{ccl}
h\pprec g & \qquad\Longrightarrow\qquad & \prob(E\cap Q_{h}\neq\emptyset)>0\\[3mm]
h\prec_{\ph} g & \qquad\Longrightarrow\qquad & \prob(\hau^{\ph}(E\cap Q_{h})>0)>0.
\end{array}\right.
\]
\end{enumerate}
\end{prp}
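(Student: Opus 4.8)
The plan is to produce the compact sets $Q_g$ by a generalized Mandelbrot fractal percolation whose retention probabilities are dictated by the gauge function $g$. Working in base two, denote by $\mathcal{I}_j$ the collection of the $2^j$ dyadic arcs of $\T$ of length $2^{-j}$, and build a nested sequence of random unions of arcs as follows: an arc of $\mathcal{I}_j$ is declared \emph{retained} if its parent in $\mathcal{I}_{j-1}$ is retained and, conditionally on this, an independent Bernoulli trial of parameter $p_j=g(2^{-j})/g(2^{-(j-1)})$ succeeds. This is legitimate because $g$ is nondecreasing, so $p_j\in(0,1]$; and in the range of gauge functions for which the $Q_g$ are nondegenerate the offspring means $2p_j$ stay bounded below by a constant larger than one, so the underlying branching process survives. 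Setting $Q_g$ to be the intersection over $j$ of these unions produces a compact set for which the probability that a prescribed arc of $\mathcal{I}_j$ is retained equals $P_j=g(2^{-j})/g(1)$; the construction is calibrated precisely so that $\prob(Q_g\cap I\neq\emptyset)\asymp g(2^{-j})$ for $I\in\mathcal{I}_j$.

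\textbf{The vanishing statement.} For item~(\ref{item:prp:propQg:1}) I would run a first moment argument. Assuming $\hau^{g}(E)=0$, fix $\eps>0$ and a cover of $E$ by arcs $U_n$ with $\sum_n g(\diam{U_n})<\eps$. Each $U_n$ meets a bounded number of arcs of the generation $j_n$ with $2^{-j_n}\asymp\diam{U_n}$, so that $\prob(Q_g\cap U_n\neq\emptyset)\leq C\,P_{j_n}\leq C'\,g(\diam{U_n})$. Summing over $n$ gives $\prob(E\cap Q_g\neq\emptyset)\leq C'\eps$, and letting $\eps\downarrow 0$ yields $E\cap Q_g=\emptyset$ almost surely.

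\textbf{The hitting statement.} For the first implication of item~(\ref{item:prp:propQg:2}), I would use a second moment method with the percolation run for the gauge $h$. Since $\hau^{g}(E)>0$ and $E$ is Borel, Frostman's lemma supplies a nonzero finite measure $\chi$ carried by $E$ with $\chi(\opball{x}{r})\leq g(r)$. The renormalized random measures $\chi_j=\sum_{I\in\mathcal{I}_j\text{ retained}}P_j^{-1}\,\chi|_{I}$ then form an $L^2$-bounded martingale whose weak limit $\chi_\infty$ is carried by $E\cap Q_h$ and satisfies $\esp[\chi_\infty(\T)]=\chi(\T)$. Decomposing the second moment according to the generation at which two points separate, one finds
\[
\esp[\chi_\infty(\T)^2]\asymp\chi(\T)^2+\int\!\!\int\frac{\dd\chi(x)\,\dd\chi(y)}{h(\dist(x,y))},
\]
and the Frostman bound on $\chi$ turns the double integral, via Abel summation, into a quantity comparable to $\sum_j g(2^{-j})\bigl(1/h(2^{-j})-1/h(2^{-(j-1)})\bigr)$, which is finite exactly when $h\pprec g$. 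The Paley--Zygmund inequality then gives $\prob(\chi_\infty(\T)>0)>0$, hence $\prob(E\cap Q_h\neq\emptyset)>0$.

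\textbf{The positive measure statement and the main difficulty.} The second implication is the delicate one. Rewriting $h\prec_{\ph} g$ as $(g/h)\pprec\ph$, I would show that the same surviving measure $\chi_\infty$ has finite $\ph$-energy with positive probability. The analogous computation of the expected energy yields
\[
\esp\left[\int\!\!\int\frac{\dd\chi_\infty(x)\,\dd\chi_\infty(y)}{\ph(\dist(x,y))}\right]\asymp\int\!\!\int\frac{\dd\chi(x)\,\dd\chi(y)}{h(\dist(x,y))\,\ph(\dist(x,y))},
\]
and the crux is to check that the Frostman bound $\chi(\opball{x}{r})\leq g(r)$, together with $(g/h)\pprec\ph$, makes this integral finite; this rests on a careful Abel summation for the kernel $1/(h\ph)$ and on the monotonicity of $g/h$ near the origin. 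Granting this, $\int\!\!\int\ph(\dist(x,y))^{-1}\dd\chi_\infty\,\dd\chi_\infty<\infty$ almost surely while $\chi_\infty(\T)>0$ with positive probability, so on an event of positive probability $\chi_\infty$ is a nonzero measure on $E\cap Q_h$ with finite $\ph$-energy; the energy form of the mass distribution principle then forces $\hau^{\ph}(E\cap Q_h)>0$. I expect the main obstacle to be precisely this passage from nonemptiness to positivity of the $\ph$-measure: controlling the $\ph$-energy of the \emph{random} intersection measure, and verifying that the gauge-theoretic condition $h\prec_{\ph} g$ is exactly what is needed to keep this energy integrable, is where the real work lies. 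The construction and fine properties of the family $Q_g$ that make all these moment computations rigorous are the content of Section~\ref{sec:fracperc}.
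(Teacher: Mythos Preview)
Your construction of $Q_g$ and your arguments for item~(\ref{item:prp:propQg:1}) and the first implication of item~(\ref{item:prp:propQg:2}) coincide with the paper's (Lemmas~\ref{lem:hitperconull} and~\ref{lem:hitpercopos}): a first-moment covering bound and a second-moment martingale with a Frostman measure. For the second implication of item~(\ref{item:prp:propQg:2}), however, the paper takes a much shorter route than your energy computation. Instead of bounding the expected $\ph$-energy of the random limit measure $\chi_\infty$, the paper exploits a multiplicativity property of the percolation: since the retention probability at level $j$ is $g(2^{-j})/g(2^{-(j-1)})$, the percolation attached to the product gauge $h\ph$ has the same law as the intersection of two \emph{independent} percolations with gauges $h$ and $\ph$, so that $Q_{h\ph}\stackrel{d}{=}Q_h\cap Q_\ph$. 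One then argues that $h\prec_\ph g$ forces $h\ph\pprec g$, applies the already established hitting lemma to get $\prob(E\cap Q_{h\ph}\neq\emptyset)>0$, rewrites this as $\prob((E\cap Q_h)\cap Q_\ph\neq\emptyset)>0$ with $Q_\ph$ independent of $E\cap Q_h$, and finally invokes Lemma~\ref{lem:hitperconull} in contrapositive to deduce $\hau^\ph(E\cap Q_h)>0$ with positive probability. This reduces the whole third implication to a two-line corollary of the first two and sidesteps the energy estimate you flagged as the main obstacle. Your direct approach is sound in principle, and the Abel summation needed to control $\esp\bigl[\iint \ph(\dist(x,y))^{-1}\,\dd\chi_\infty\,\dd\chi_\infty\bigr]$ leads to essentially the same gauge condition $h\ph\pprec g$ that the paper uses; but the percolation product trick is far more economical. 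One small slip: the rewriting of $h\prec_\ph g$ should read $\ph\pprec g/h$, not $(g/h)\pprec\ph$.
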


The above result is established in Section~\ref{sec:fracperc}; this is actually a straightforward consequence of Lemmas~\ref{lem:hitperconull}--\ref{lem:hauperco} therein. Let us now give the proof of Lemma~\ref{lem:codim}. Let $\ph$ denote a gauge function, and let $E$ be a random subset of the circle which intersects almost surely every fixed compact set having positive Hausdorff $\ph$-measure. Now, let $G$ denote a compact subset of the circle, and let $g$ be a gauge function such that $\hau^{g}(G)>0$. In addition, let $h$ be a gauge function such that $h\prec_{\ph} g$. Our purpose is now to show that $\hau^{h}(E\cap G)>0$ almost surely.

To proceed, let $(Q_{h}^{n})_{n\geq 1}$ denote a sequence of independent copies of the fractal percolation set $Q_{h}$ that are also independent of the random set $E$, and let
\[
\widehat{Q}_{h}=\bigcup_{n=1}^{\infty} Q_{h}^{n}.
\]
Proposition~\ref{prp:propQg}(\ref{item:prp:propQg:2}) ensures that each set $G\cap Q_{h}^{n}$ has Hausdorff $\ph$-measure zero with a probability which does not depend on $n$ and is smaller than one. Thus, in view of the subadditivity of Hausdorff measures, we have
\[
\as \qquad \hau^{\ph}(G\cap\widehat{Q}_{h})>0.
\]
Therefore, in view of~\cite[Theorem~4.10]{Falconer:2003oj}, the set $G\cap\widehat{Q}_{h}$ almost surely contains a compact set with positive Hausdorff $\ph$-measure. This compact subset thus intersects the random set $E$ with probability one. Therefore,
\[
\as \qquad E\cap G\cap\widehat{Q}_{h}\neq\emptyset.
\]
On top of that, if $\hau^{h}(E\cap G)=0$, then Proposition~\ref{prp:propQg}(\ref{item:prp:propQg:1}) ensures that the probability that the set $E\cap G$ intersects any of the copies $Q_{h}^{n}$ is equal to zero. As a consequence,
\[
\as \qquad E\cap G\cap\widehat{Q}_{h}\neq\emptyset \quad\Longrightarrow\quad \hau^{h}(E\cap G)>0,
\]
and Lemma~\ref{lem:codim} follows.

\section{A generalized fractal percolation process}\label{sec:fracperc}

This section is devoted to the construction and the study of the family of compact sets $Q_{g}$ that we use in the proof of Lemma~\ref{lem:codim}, see Section~\ref{subsec:prooflemcodim} above. These sets are obtained by dint of a slight generalization of Mandelbrot's fractal percolation process. To begin with, recall that the infinite complete binary tree may naturally be encoded by the set
\[
\Tcal=\bigcup_{j=0}^{\infty}\{0,1\}^{j}.
\]
Here, we adopt the convention that $\{0,1\}^{0}$ is reduced to the singleton containing only the root $\varnothing$. Specifically, every node $u\in\Tcal$ with generation $\gen{u}=j$ may be seen as a finite word $u=u_{1}\ldots u_{j}$ over the alphabet $\{0,1\}$, with child nodes the two words $u_{1}\ldots u_{j}0$ and $u_{1}\ldots u_{j}1$, and with parent node the word $\pare{u}=u_{1}\ldots u_{j-1}$ in the case where $j$ is positive. The tree structure is then recovered by endowing the vertex set $\Tcal$ with the arcs $(\pare{u},u)$, for $u\neq\varnothing$.

Now, given a gauge function $g$, let us consider the following inhomogeneous percolation process on the edges of the above tree: the edge connecting a given node $u\neq\varnothing$ to his parent is retained with a probability equal to $g(2^{-\gen{u}})/g(2^{-\gen{u}+1})$, independently of the other edges. Then, we say that a node $u$ survives the percolation if all the edges between $u$ and the root $\varnothing$ are retained. We always assume that the root itself survives the percolation. The random subtree of $\Tcal$ composed by the nodes that survive the percolation is denoted by $\Tcal_{g}$. Note that every given node $u$ survives the percolation with probability $g(2^{-\gen{u}})/g(1)$.

Furthermore, recall that the vertices of the tree $\Tcal$ lead to a natural parametrization of the dyadic intervals of the circle $\T$. In fact, the dyadic interval associated with a node $u=u_{1}\ldots u_{j}$ in $\Tcal$ is the image, denoted by $\lambda(u)$, of the interval $(u_{1}2^{-1}+\ldots+u_{j}2^{-j})+[0,2^{-j})$ under the projection modulo one. In addition, the interval associated with the root is chosen to be the whole circle, that is, $\lambda(\varnothing)=\T$. This enables us to consider the random compact subset of the circle
\[
Q_{g}=\bigcap_{j=1}^{\infty}\downarrow\bigcup_{u\in\Tcal_{g}\atop\gen{u}=j}\adh{\lambda(u)},
\]
where $\adh{\,\cdot\,}$ stands for closure. The set $Q_{g}$ may be seen as an extension to the inhomogeneous setting of the compact set obtained through Mandelbrot's fractal percolation process, see~\cite{Durand:2009fk} and the references therein.

Let us recall that Proposition~\ref{prp:propQg} above contains all the important properties satisfied by the sets $Q_{g}$ that we use in the proof of Lemma~\ref{lem:codim}. This proposition may naturally be split into three separate lemmas that we now state and prove. The first two lemmas discuss the probability with which the random set $Q_{g}$ intersects a given subset of the circle.

\begin{lem}\label{lem:hitperconull}
For any set $E\subseteq\T$ and any gauge function $g$,
\[
\hau^{g}(E)=0 \qquad\Longrightarrow\qquad \as \quad E\cap Q_{g}=\emptyset.
\]
\end{lem}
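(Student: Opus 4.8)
The plan is to run a direct first-moment argument. First I would record the survival probability of a single node: for $u$ of generation $\gen{u}=j$, telescoping the edge-retention probabilities along the path from the root to $u$ gives
\[
\prod_{i=1}^{j}\frac{g(2^{-i})}{g(2^{-(i-1)})}=\frac{g(2^{-j})}{g(1)},
\]
so $u$ survives with probability $g(2^{-j})/g(1)$, as already noted after the definition of $\Tcal_{g}$. Since the relevant events are all controlled by this formula, the whole proof reduces to a union bound over a well-chosen family of dyadic intervals.

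Assuming $\hau^{g}(E)=0$, I would fix $\eps>0$. Because $\hau^{g}_{\delta}(E)=0$ for every $\delta>0$, there is a cover $(U_{n})_{n\geq 1}$ of $E$ with $\diam{U_{n}}<\delta$ and $\sum_{n}g(\diam{U_{n}})<\eps$, where $\delta$ is taken small (say $\delta<1/2$). The point is to discretize this cover at dyadic scales. For each $n$ let $j_{n}$ be the least integer with $2^{-j_{n}}\leq\diam{U_{n}}$, so that also $\diam{U_{n}}<2\cdot 2^{-j_{n}}$; then $U_{n}$, being contained in an arc of length $\diam{U_{n}}$, meets at most three dyadic intervals of generation $j_{n}$. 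Adjoining to these their immediate neighbors, I obtain a family $\mathcal{D}_{n}$ of generation-$j_{n}$ dyadic nodes of cardinality at most a fixed constant $c$.

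The key geometric step is the inclusion $\{E\cap Q_{g}\neq\emptyset\}\subseteq\bigcup_{n}\bigcup_{u\in\mathcal{D}_{n}}\{u\text{ survives}\}$. Indeed, if $\xi\in E\cap Q_{g}$, then $\xi\in U_{n}$ for some $n$, and since $\xi\in Q_{g}$ there is a surviving node $w$ of generation $j_{n}$ with $\xi\in\adh{\lambda(w)}$; as $\xi$ lies in the half-open generation-$j_{n}$ dyadic interval that meets $U_{n}$, the closure $\adh{\lambda(w)}$ can contain $\xi$ only if $w$ is that interval or one of its neighbors, whence $w\in\mathcal{D}_{n}$. A union bound together with the monotonicity $g(2^{-j_{n}})\leq g(\diam{U_{n}})$ then yields
\[
\prob(E\cap Q_{g}\neq\emptyset)\leq\sum_{n}\frac{c\,g(2^{-j_{n}})}{g(1)}\leq\frac{c}{g(1)}\sum_{n}g(\diam{U_{n}})<\frac{c\,\eps}{g(1)}.
\]
Letting $\eps\to 0$ gives $\prob(E\cap Q_{g}\neq\emptyset)=0$, which is the claim.

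The only delicate point, and the one I would treat most carefully, is the boundary bookkeeping in the geometric inclusion: a point of $Q_{g}$ sits in the \emph{closure} of a surviving interval, so it may lie on a dyadic endpoint shared with a non-surviving neighbor. Including the neighbors of each selected interval in $\mathcal{D}_{n}$ is precisely what absorbs this issue while keeping $\#\mathcal{D}_{n}$ bounded; everything else is a routine application of the definition of Hausdorff $g$-measure and of the independence built into the percolation.
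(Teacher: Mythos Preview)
Your argument is correct and is essentially the paper's own proof: both run a first-moment union bound over the dyadic intervals covering a near-optimal $\delta$-cover of $E$, using the survival probability $g(2^{-j})/g(1)$. The only cosmetic differences are that the paper treats covering sets of diameter zero separately (you implicitly assume $\diam{U_{n}}>0$ when defining $j_{n}$, which is harmless after replacing singletons by tiny arcs), and it handles the boundary issue by noting that $Q_{g}$ almost surely contains no dyadic point rather than by adjoining neighbors as you do.
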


\begin{proof}
Let $\delta>0$ and let $(U_{n})_{n\geq 1}$ be a sequence of subsets of $\T$ such that $E\subseteq\bigcup_{n} U_{n}$ and $\diam{U_{n}}<\delta$ for all $n\geq 1$. Let $\Ncal_{0}$ be the set of all integers $n\geq 1$ for which the diameter of $U_{n}$ vanishes, and let $\Ncal_{1}$ denote its complement in $\N$. Then, the set $E$ may be decomposed as the union of the sets
\[
E_{0}=E\cap\bigcup_{n\in\Ncal_{0}} U_{n} \qquad\text{and}\qquad E_{1}=E\cap\bigcup_{n\in\Ncal_{1}} U_{n}.
\]

On the one hand, it is easy to check that any point of the circle that is fixed in advance belongs to the set $Q_{g}$ with probability zero. Since the set $E_{0}$ is at most countable, it follows that its intersection with $Q_{g}$ is almost surely empty.

On the other hand, for any integer $n\in\Ncal_{1}$, the set $U_{n}$ is contained in four dyadic intervals of length at most $\diam{U_{n}}$. Accordingly, there exists a family of nodes $u^{n,i}$ in $\Tcal$, with $n\in\Ncal_{1}$ and $i\in\{1,2,3,4\}$, such that
\[
\left\{\begin{array}{l}
U_{n}\subseteq\lambda(u^{n,1})\cup\lambda(u^{n,2})\cup\lambda(u^{n,3})\cup\lambda(u^{n,4})\\[2mm]
\max\{\diam{\lambda(u^{n,1})},\diam{\lambda(u^{n,2})},\diam{\lambda(u^{n,3})},\diam{\lambda(u^{n,4})}\}\leq \diam{U_{n}}
\end{array}\right.
\]
for all $n\in\Ncal_{1}$. As a result, $E_{1}\cap Q_{g}$ is covered by the sets $\lambda(u^{n,i})\cap Q_{g}$ for $n\in\Ncal_{1}$ and $i\in\{1,2,3,4\}$. Note that, with probability one, the set $Q_{g}$ cannot contain any dyadic point, that is, any point of the form $k2^{-j}$. Thus, this last intersection is empty if $u^{n,i}$ does not survive the percolation. This implies that one of these nodes necessarily survives the percolation when the set $E_{1}\cap Q_{g}$ is nonempty. Hence,
\[
\prob(E_{1}\cap Q_{g}\neq\emptyset)
\leq\sum_{n\in\Ncal_{1} \atop i\in\{1,2,3,4\}} \prob(u^{n,i}\in\Tcal_{g})
=\sum_{n\in\Ncal_{1} \atop i\in\{1,2,3,4\}} \frac{g(2^{-\gen{u^{n,i}}})}{g(1)}
\leq \frac{4}{g(1)} \sum_{n=1}^{\infty} g(\diam{U_{n}}).
\]

Taking the infimum over all the possible coverings $(U_{n})_{n\geq 1}$, and then letting $\delta$ go to zero, we deduce that
\[
\prob(E\cap Q_{g}\neq\emptyset)\leq\prob(E_{0}\cap Q_{g}\neq\emptyset)+\prob(E_{1}\cap Q_{g}\neq\emptyset)\leq \frac{4}{g(1)}\hau^{g}(E),
\]
and the result follows.
\end{proof}

\begin{lem}\label{lem:hitpercopos}
For any Borel set $E\subseteq\T$ and any gauge function $g$,
\[
\hau^{g}(E)>0 \qquad\Longrightarrow\qquad \forall h\pprec g \quad \prob(E\cap Q_{h}\neq\emptyset)>0.
\]
\end{lem}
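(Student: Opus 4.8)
The plan is to run a second-moment (Paley--Zygmund) argument carried by a measure-valued martingale living on the surviving cells of the percolation tree $\Tcal_{h}$. First I would invoke Frostman's lemma, exactly as in the proof of Theorem~\ref{thm:nonemptygene}, to extract from $\hau^{g}(E)>0$ a nonzero finite Borel measure $\chi$ supported on a compact subset $E'$ of $E$ and satisfying $\chi(\opball{x}{r})\leq g(r)$ for all $x$ and all $r\in(0,1)$; write $M=\chi(\T)>0$. For each generation $j$ I would introduce the random measure defined on Borel sets $A$ by
\[
\mu_{j}(A)=\frac{h(1)}{h(2^{-j})}\sum_{u\in\Tcal_{h},\,\gen{u}=j}\chi(A\cap\lambda(u)).
\]
Since a node $u$ with $\gen{u}=j$ survives with probability $h(2^{-j})/h(1)$, one gets $\esp[\mu_{j}(A)]=\chi(A)$; and conditioning on the percolation up to generation $j$ and using that the children of a surviving node are retained independently with probability $h(2^{-(j+1)})/h(2^{-j})$, a short computation shows $\esp[\mu_{j+1}(A)\mid\mathcal F_{j}]=\mu_{j}(A)$. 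Thus $(\mu_{j}(A))_{j}$ is a martingale for every $A$; in particular $Z_{j}=\mu_{j}(\T)$ is a nonnegative martingale with constant mean $M$.

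The crux is to bound $\esp[Z_{j}^{2}]$ uniformly. Two generation-$j$ nodes $u,u'$ survive simultaneously with probability $h(2^{-j})^{2}/(h(1)\,h(2^{-\gen{u\wedge u'}}))$, where $u\wedge u'$ denotes their deepest common ancestor, so that
\[
\esp[Z_{j}^{2}]=h(1)\sum_{u,u'}\frac{\chi(\lambda(u))\,\chi(\lambda(u'))}{h(2^{-\gen{u\wedge u'}})}.
\]
I would then telescope $1/h(2^{-m})=\sum_{i=0}^{m}d_{i}$ with $d_{0}=1/h(1)$ and $d_{i}=1/h(2^{-i})-1/h(2^{-(i-1)})\geq 0$, and note that the pairs sharing a generation-$i$ ancestor contribute $\sum_{\gen{w}=i}\chi(\lambda(w))^{2}$. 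Because the dyadic cells $\lambda(w)$ of generation $i$ have diameter $2^{-i}$ and partition $\T$, the Frostman bound gives $\chi(\lambda(w))\leq g(2^{-i})$ and hence $\sum_{\gen{w}=i}\chi(\lambda(w))^{2}\leq g(2^{-i})M$. Combining,
\[
\esp[Z_{j}^{2}]\leq h(1)M\Bigl(\tfrac{g(1)}{h(1)}+\sum_{i=1}^{\infty}g(2^{-i})\bigl(\tfrac{1}{h(2^{-i})}-\tfrac{1}{h(2^{-(i-1)})}\bigr)\Bigr),
\]
and the series on the right is finite \emph{precisely} because $h\pprec g$. This identification of the hypothesis with the finiteness of the second moment is the step I expect to require the most care.

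With $\sup_{j}\esp[Z_{j}^{2}]<\infty$ in hand, the conclusion follows. The nonnegative martingale $(Z_{j})$ is bounded in $L^{2}$, hence converges almost surely and in $L^{1}$ to a limit $Z_{\infty}$ with $\esp[Z_{\infty}]=M>0$, so that $\prob(Z_{\infty}>0)>0$. To upgrade positive total mass into a genuine intersection point, I would observe that $(\mu_{j}(\lambda(u)))_{j}$ is itself an $L^{2}$-bounded martingale for each node $u$ (being dominated by $Z_{j}$), and that the resulting consistent family of limits defines a Borel measure $\mu_{\infty}$ with $\mu_{\infty}(\T)=Z_{\infty}$. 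Since each $\mu_{j}$ is supported on $E'\cap\bigcup_{u\in\Tcal_{h},\,\gen{u}=j}\adh{\lambda(u)}$ and these cell unions are nested in $j$, the support of $\mu_{\infty}$ is contained in $E'\cap Q_{h}$. On the event $\{Z_{\infty}>0\}$ the measure $\mu_{\infty}$ is nonzero, whence $E'\cap Q_{h}$, and a fortiori $E\cap Q_{h}$, is nonempty. Therefore $\prob(E\cap Q_{h}\neq\emptyset)\geq\prob(Z_{\infty}>0)>0$, as required. The secondary technical point to handle carefully is this passage to the limit measure and the verification that its support lands inside $E'\cap Q_{h}$, but this is standard fractal-percolation bookkeeping once the uniform second-moment bound is established.
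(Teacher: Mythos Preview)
Your proof is correct and follows essentially the same approach as the paper's: Frostman's lemma plus an $L^{2}$-bounded martingale argument, with the uniform second-moment bound supplied precisely by the condition $h\pprec g$. The only cosmetic difference is in the final step, where the paper bypasses the construction of the limit measure $\mu_{\infty}$ and instead observes directly that $Z_{\infty}>0$ forces, for every $j$, some surviving cell $\overline{\lambda(u)}$ at generation $j$ to meet the compact support of $\chi$, and then invokes Cantor's intersection theorem on these nested nonempty compact sets to produce a point of $E\cap Q_{h}$.
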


\begin{proof}
Since $\hau^{g}(E)$ is positive, Frostman's lemma implies that there exists a Borel measure $\chi$ with support included in $E$ such that~(\ref{eq:Frostman}) holds. For any node $u\in\Tcal$, set $\psi(u)=\chi(\lambda(u))$. Moreover, for any integer $j\geq 0$, set $h_{j}=h(2^{-j})$ and
\[
Z_{j}=\frac{1}{h_{j}}\sum_{u\in\Tcal_{h} \atop \gen{u}=j}\psi(u),
\]
and let $\Gcal_{j}$ denote the $\sigma$-algebra generated by the events $\{u\in\Tcal_{h}\}$ for $\gen{u}\leq j$. It is then easy to check that $(Z_{j})_{j\geq 0}$ is a nonnegative martingale with respect to the filtration $(\Gcal_{j})_{j\geq 0}$, thereby converging almost surely to some random variable $Z_{\infty}\in L^{1}$. Furthermore, for any integer $j\geq 1$,
\[
\esp[Z_{j}^{2}]=\frac{1}{h_{j}^{2}}\sum_{u,v\in\Tcal \atop \gen{u}=\gen{v}=j}\psi(u)\psi(v)\,\prob(u\in\Tcal_{h}\text{ and }v\in\Tcal_{h}).
\]
The probability that two nodes $u$ and $v$ both survive the percolation is clearly equal to $(h_{\gen{u}}h_{\gen{v}})/(h_{0}h_{\gen{u\wedge v}})$, where $u\wedge v$ denotes their lowest common ancestor in the tree $\Tcal$. As a consequence,
\[
\esp[Z_{j}^{2}]=\sum_{w\in\Tcal \atop \gen{w}\leq j}\frac{1}{h_{0}h_{\gen{w}}}\sum_{\gen{u}=\gen{v}=j \atop u\wedge v=w}\psi(u)\psi(v)
\]
Note that the inner sum is equal to $\psi(w)^{2}$ if the node $w$ has generation $j$, and to $2\psi(w0)\psi(w1)$ if $w$ has generation less than $j$. Therefore,
\[
\esp[Z_{j}^{2}]=\sum_{w\in\Tcal \atop \gen{w}\leq j}\frac{1}{h_{0}h_{\gen{w}}}\left(\psi(w)^{2}-\ind_{\{\gen{w}\leq j-1\}}(\psi(w0)^{2}+\psi(w1)^{2})\right),
\]
from which it follows that
\[
\esp[Z_{j}^{2}]=\frac{\psi(\varnothing)^{2}}{h_{0}^{2}}+\frac{1}{h_{0}}\sum_{w\in\Tcal \atop 0<\gen{w}\leq j}\psi(w)^{2}\left(\frac{1}{h_{\gen{w}}}-\frac{1}{h_{\gen{w}-1}}\right).
\]
In view of~(\ref{eq:Frostman}) and the fact that $h\pprec g$, we deduce that
\[
\sup_{j\geq 0}\esp[Z_{j}^{2}]\leq\frac{\chi(\T)^{2}}{h(1)^{2}}+\frac{\chi(\T)}{h(1)}\sum_{j=1}^{\infty}g(2^{-j})\left(\frac{1}{h(2^{-j})}-\frac{1}{h(2^{-(j-1)})}\right)<\infty.
\]
This ensures that the martingale $(Z_{j})_{j\geq 0}$ converges to $Z_{\infty}$ in $L^{2}$. In particular, the expectation of $Z_{\infty}$ is equal to that of $Z_{0}$, specifically, $\chi(\T)/h(1)$, so that $Z_{\infty}$ is positive with positive probability. On top of that, note that if $Z_{\infty}$ is positive, then for any integer $j\geq 0$, there is a node $u\in\Tcal_{h}$ with generation equal to $j$ such that $\lambda(u)$ intersects the support of the measure $\chi$. In that case, Cantor's intersection theorem ensures that the limit set $Q_{h}$ intersects $E$, and the result follows.
\end{proof}

The third and last lemma about the sets $Q_{g}$ concerns the size of their intersection with a given Borel subset of the circle.

\begin{lem}\label{lem:hauperco}
For any Borel set $E\subseteq\T$ and any gauge functions $g$ and $\ph$,
\[
\hau^{g}(E)>0 \qquad\Longrightarrow\qquad \forall h\prec_{\ph} g \quad \prob(\hau^{\ph}(E\cap Q_{h})>0)>0.
\]
\end{lem}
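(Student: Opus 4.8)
The plan is to prove the lemma through the mass distribution principle with gauge $\ph$, the point being that the condition $h\prec_{\ph}g$ is tailored precisely so that the Frostman estimate needed for a natural random measure converges. I would deliberately avoid the energy method here: the $\ph$-energy of any measure witnessing $\hau^{\ph}>0$ is critically divergent, which is exactly why the definition~(\ref{eq:defprecph}) introduces the auxiliary proxy $\ph$ instead of working with $g/h$ directly.

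First I would apply Frostman's lemma (as in the proof of Lemma~\ref{lem:hitpercopos}) to obtain a Borel measure $\chi$ supported in $E$ with $\chi(\opball{x}{r})\le g(r)$ for all $x$ and all $r\in(0,1)$, normalized so that $\chi(\T)=1$. Writing $\psi(u)=\chi(\lambda(u))$ and $h_j=h(2^{-j})$, I would spread $\chi$ along the surviving tree by forming the random measures $\chi_h^{(j)}=(h_0/h_j)\sum_{u\in\Tcal_h,\ \gen{u}=j}\chi|_{\lambda(u)}$, whose total masses equal $h_0$ times the martingale $(Z_j)$ of Lemma~\ref{lem:hitpercopos}. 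These converge weakly, almost surely, to a random measure $\chi_h$ carried by $E\cap Q_h$ with $\esp[\chi_h]=\chi$. Since $\hau^{g}(E)>0$ with $E\subseteq\T$ forces $g(r)\gtrsim r$, and then $g/h\to0$ forces $h(r)/r\to\infty$, the percolation $Q_h$ is supercritical, and the same second moment computation as in Lemma~\ref{lem:hitpercopos} shows that $\chi_h$ is nonzero with positive probability.

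The heart of the matter is a Frostman-type upper bound for $\chi_h$ with gauge $\ph$. Conditioning on a node $u$ surviving, the expected mass it carries is $\esp[\chi_h(\lambda(u))\mid u\in\Tcal_h]=(h_0/h_{\gen{u}})\psi(u)\le h_0\,(g/h)(2^{-\gen{u}})$, so $\chi_h$ behaves like a measure of gauge $g/h$; recall that $h\prec_{\ph}g$ says precisely that $\ph$ double-precedes $g/h$, that is $\ph\pprec(g/h)$. I would compute the second moments of the dyadic cell masses $\chi_h(\lambda(u))$ and, after a summation by parts arranged so that the fluctuation (cross) terms telescope, reduce the expected $\ph$-weighted overshoot to a constant multiple of the convergent series $\sum_{j}(g/h)(2^{-j})\bigl(1/\ph(2^{-j})-1/\ph(2^{-(j-1)})\bigr)$ of~(\ref{eq:defprecph}). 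This yields, on an event of positive probability, $\limsup_{r\to0}\chi_h(\opball{x}{r})/\ph(r)<\infty$ for $\chi_h$-almost every $x$, whereupon the mass distribution principle (see~\cite{Falconer:2003oj,Mattila:1995fk}) gives $\hau^{\ph}(E\cap Q_h)>0$ with positive probability.

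The main obstacle is exactly this Frostman estimate. The naive bound charging each scale the threshold $A\,\ph(2^{-j})$ produces the \emph{divergent product series} $\sum_{j}(g/h)(2^{-j})/\ph(2^{-j})$ rather than the convergent increment series of~(\ref{eq:defprecph}); extracting the latter requires organizing the argument so that each point is charged only through the increments $1/\ph(2^{-j})-1/\ph(2^{-(j-1)})$ (a first-passage or telescoping device), while simultaneously taming the genuine second-moment fluctuations of the percolation, which is where supercriticality of $Q_h$ and the monotonicity of $g/h$ enter. A secondary, routine nuisance is the passage from dyadic-cell masses to true ball masses $\chi_h(\opball{x}{r})$, which I would handle by running the estimate over a bounded number of shifted dyadic grids, so that every ball is comparable to a single cell in one of them.
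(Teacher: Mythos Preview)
Your approach is genuinely different from the paper's, which is a five-line reduction to the two preceding lemmas via a factorization trick. Since edge-retention probabilities multiply, $Q_{h\ph}$ has the same law as $Q_{h}\cap Q_{\ph}$ with the two factors independent; the paper reads $h\prec_{\ph}g$ as $\ph\pprec g/h$ and infers $h\ph\pprec g$, so Lemma~\ref{lem:hitpercopos} gives $\prob(E\cap Q_{h\ph}\neq\emptyset)>0$, that is, $\prob\bigl((E\cap Q_{h})\cap Q_{\ph}\neq\emptyset\bigr)>0$; the contrapositive of Lemma~\ref{lem:hitperconull} with gauge $\ph$, applied conditionally on $Q_{h}$, then forces $\hau^{\ph}(E\cap Q_{h})>0$ with positive probability. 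No Frostman-type estimate on a random measure is ever attempted.

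Your direct route has to confront precisely the difficulty that the factorization sidesteps, and the sketch does not close it. Two concrete gaps. First, ``the same second moment computation as in Lemma~\ref{lem:hitpercopos}'' shows $\chi_{h}\neq 0$ with positive probability only when $h\pprec g$, and this is \emph{not} a consequence of $h\prec_{\ph}g$: with $g(r)=r$, $h(r)\asymp r\log(1/r)$ and $\ph(r)\asymp(\log\log(1/r))^{-1}$ one has $(g/h)(2^{-j})\asymp 1/j$, so the series~(\ref{eq:defprecph}) is comparable to $\sum_{j}j^{-2}<\infty$, yet $\sum_{j}g(2^{-j})\bigl(h(2^{-j})^{-1}-h(2^{-(j-1)})^{-1}\bigr)\asymp\sum_{j}j^{-1}=\infty$, so the $L^{2}$ bound of Lemma~\ref{lem:hitpercopos} blows up; and the mere fact that $h(r)/r\to\infty$ does not by itself guarantee positive survival probability for a near-critical inhomogeneous branching process. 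Second, even on the event $\chi_{h}\neq 0$, you still need $\limsup_{r\to 0}\chi_{h}(\opball{x}{r})/\ph(r)<\infty$ for $\chi_{h}$-almost every $x$; your ``first-passage/telescoping device'' is supposed to trade the divergent product series for the convergent increment series~(\ref{eq:defprecph}), but you would be controlling an almost-sure supremum over all scales of a quantity whose second moments, as just noted, may diverge. The paper's trick dodges both issues at once by delegating the $\ph$-scale control to an independent copy of $Q_{\ph}$ and then quoting Lemma~\ref{lem:hitperconull}.
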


\begin{proof}
Let us assume that the set $E$ has positive Hausdorff $g$-measure, and that $h\prec_{\ph} g$. Then, $g/h$ coincides with a gauge function $\psi$ satisfying $\ph\pprec\psi$. In particular, $h\ph\pprec g$ and Lemma~\ref{lem:hitpercopos} implies that the random set $Q_{h\ph}$ intersects $E$ with positive probability. Furthermore, it is easy to see that the set $Q_{h\ph}$ is distributed as $Q_{h}\cap Q_{\ph}$, where $Q_{h}$ and $Q_{\ph}$ are independent. Thus, the set $Q_{\ph}$ intersects $E\cap Q_{h}$ with positive probability as well. The result now follows from Lemma~\ref{lem:hitperconull}.
\end{proof}


\begin{thebibliography}{99}

\bibitem{Amou:2010fk}
M.~Amou and Y.~Bugeaud, Exponents of Diophantine approximation and expansions in integer bases, {\em J. Lond. Math. Soc. (2)} {\bf 81}(2):297--316, 2010.

\bibitem{Apostol:1976uq}
T.~M. Apostol, {\em Introduction to analytic number theory}, Undergraduate Texts in Mathematics, Springer-Verlag, New York, 1976.

\bibitem{Baker:1970jf}
A.~Baker and W.M.~Schmidt, Diophantine approximation and Hausdorff dimension, {\em Proc. London Math. Soc. (3)} {\bf 21}:1--11, 1970.

\bibitem{Beresnevich:2005vn}
V.~Beresnevich and S.~Velani, A mass transference principle and the Duffin-Schaeffer conjecture for Hausdorff measures, {\em Ann. of Math. (2)} {\bf 164}(3):971--992, 2006.

\bibitem{Besicovitch:1934ly}
A.~Besicovitch, Sets of fractional dimensions (IV): on rational approximation to real numbers, {\em J. London Math. Soc. (2)} {\bf 9}:126--131, 1934.

\bibitem{Broderick:2011fk}
R.~Broderick, L.~Fishman, and A.~Reich, Intrinsic approximation on Cantor-like sets, a problem of Mahler, {\em Mosc. J. Comb. Number Theory} {\bf 1}(4):3--12, 2011.

\bibitem{Bugeaud:2003ye}
Y.~Bugeaud, A note on inhomogeneous Diophantine approximation, {\em Glasg. Math. J.} {\bf 45}(1):105--110, 2003.

\bibitem{Bugeaud:2004fk}
Y.~Bugeaud, {\em Approximation by algebraic numbers}, vol.~160 of {\em Cambridge Tracts in Mathematics}, Cambridge University Press, 2004.

\bibitem{Bugeaud:2004wc}
Y.~Bugeaud, Intersective sets and Diophantine approximation, {\it Michigan Math. J.}, 52(3):667--682, 2004.

\bibitem{Bugeaud:2008uq}
Y.~Bugeaud, Diophantine approximation and Cantor sets, {\em Math. Ann.} {\bf 341}(3):677--684, 2008.

\bibitem{David:1997uq}
G.~David and S.~Semmes, {\em Fractured fractals and broken dreams: Self-similar geometry through metric and measure}, volume~7 of {\em Oxford Lecture Series in Mathematics and its Applications}, The Clarendon Press Oxford University Press, New York, 1997.

\bibitem{Durand:2007uq}
A.~Durand, Sets with large intersection and ubiquity, {\em Math. Proc. Cambridge Philos. Soc.} {\bf 144}(1):119--144, 2008.

\bibitem{Durand:2009fk}
A.~Durand, Random fractals and tree-indexed Markov chains, {\em Rev. Mat. Iberoam.} {\bf 25}(3):1089--1126, 2009.

\bibitem{Durand:2010fk}
A.~Durand, On randomly placed arcs on the circle, in {\em Recent developments in fractals and related fields}, Appl. Numer. Harmon. Anal., pages 343--351. Birkh{\"a}user Boston Inc., Boston, MA, 2010.

\bibitem{Falconer:1994hx}
K.~Falconer, Sets with large intersection properties, {\em J. London Math. Soc. (2)}, {\bf 49}(2):267--280, 1994.

\bibitem{Falconer:2003oj}
K.~Falconer, {\em Fractal geometry: Mathematical foundations and applications}, John Wiley \& Sons Inc., Chichester, 2nd edition, 2003.

\bibitem{Fan:2007fj}
A.-H.~Fan, J.~Schmeling, and S.~Troubetzkoy, Dynamical Diophantine approximation, preprint, 2007, {\tt arXiv:0705.4203}.

\bibitem{Fishman:2012uq}
L.~Fishman and D.~Simmons, Intrinsic approximation for fractals defined by rational iterated function systems - Mahler's research suggestion, preprint, 2012, {\tt arXiv:1208.2089}.

\bibitem{Jarnik:1929mf}
V.~Jarn\'\i k, Diophantischen Approximationen und Hausdorffsches Mass, {\em Mat. Sb.} {\bf 36}:371--381, 1929.

\bibitem{Khoshnevisan:2000fj}
D.~Khoshnevisan, Y.~Peres, and Y.~Xiao, Limsup random fractals, {\em Electron. J. Probab.} {\bf 5}(4):1--24, 2000.

\bibitem{Kleinbock:2004fk}
D.~Kleinbock, E.~Lindenstrauss, and B.~Weiss, On fractal measures and diophantine approximation, {\em Selecta Mathematica, New Series} {\bf 10}:479--523, 2004.

\bibitem{Koksma:1939fk}
J.F.~Koksma, \"Uber die Mahlersche Klasseneinteilung der transzendenten Zahlen und die Approximation komplexer Zahlen durch algebraische Zahlen, {\em Monatsh. Math. Phys.} {\bf 48}:176--189, 1939.

\bibitem{Kristensen:2006ys}
S.~Kristensen, Approximating numbers with missing digits by algebraic numbers, {\em Proc. Edinb. Math. Soc. (2)} {\bf 49}(3):657--666, 2006.

\bibitem{Levesley:2007pd}
J.~Levesley, C.~Salp, and S.~Velani, On a problem of K.~Mahler: Diophantine approximation and Cantor sets, {\em Math. Ann.} {\bf 338}(1):97--118, 2007.

\bibitem{Li:2012uq}
B.~Li, N.-R. Shieh, and Y.~Xiao, Hitting probabilities of the random covering sets, preprint, 2012.

\bibitem{Liao:2013fk}
L.~Liao and S.~Seuret, Diophantine approximation by orbits of expanding Markov maps, {\em Ergodic Theory Dynam. Systems} {\bf 33}(2):585--608, 2013.

\bibitem{Mahler:1984zr}
K.~Mahler, Some suggestions for further research, {\em Bull. Austral. Math. Soc.} {\bf 29}(1):101--108, 1984.

\bibitem{Mattila:1995fk}
P.~Mattila, {\em Geometry of sets and measures in Euclidean spaces: Fractals and rectifiability}, volume~44 of {\em Cambridge Studies in Advanced Mathematics}, Cambridge University Press, Cambridge, 1995.

\bibitem{Pollington:2005fk}
A.~Pollington and S.L.~Velani, Metric Diophantine approximation and ``absolutely friendly'' measures, {\em Selecta Math. (N.S.)} {\bf 11}(2):297--307, 2005.

\bibitem{Schmeling:2003}
J.~Schmeling and S.~Troubetzkoy, Inhomogeneous Diophantine approximation and angular recurrence for billiards in polygons, {\em Sb. Math.} {\bf 194}:295--309, 2003.

\bibitem{Schmidt:1971gf}
W.~M. Schmidt, Approximation to algebraic numbers, {\em Enseignement Math. (2)} {\bf 17}:187--253, 1971.

\bibitem{Shepp:1972qj}
L.~Shepp, Covering the circle with random arcs, {\em Israel J. Math.} {\bf 11}:328--345, 1972.

\bibitem{Tricot:1982fk}
C.~Tricot, Jr., Two definitions of fractional dimension, {\em Math. Proc. Cambridge Philos. Soc.} {\bf 91}(1):57--74, 1982.

\bibitem{Weiss:2001mz}
B.~Weiss, Almost no points on a Cantor set are very well approximable, {\em Proc. R. Soc. Lond. A} {\bf 457}:949--952, 2001.

\end{thebibliography}
\end{document}